\setlist[itemize]{noitemsep}
\renewcommand{\leq}{\leqslant}
\renewcommand{\geq}{\geqslant}
\newtheorem{theo}{Theorem}[section]
\newtheorem{lemma}[theo]{Lemma}
\newtheorem{prop}[theo]{Proposition}
\newtheorem{cor}[theo]{Corollary}
\newtheorem{deff}[theo]{Definition}
\newtheorem{conj}[theo]{Conjecture} 
\theoremstyle{definition}
\newtheorem{rem}[theo]{Remark}
\newtheorem{example}[theo]{Example}
\numberwithin{equation}{section} 
\newcommand{\Q}{\mathbb{Q}}
\newcommand{\F}{\mathbb{F}}
\newcommand{\Z}{\mathbb{Z}}
\renewcommand{\P}{\mathbb{P}} 
\newcommand{\p}{\mathfrak{p}}
\newcommand{\q}{\mathfrak{q}}
\newcommand{\PGL}{\mathrm{PGL}}
\newcommand{\Res}{\mathrm{Res}} 
\newcommand{\Per}{\mathrm{Per}} 
\newcommand{\PrePer}{\mathrm{PrePer}} 
\newcommand{\Fix}{\mathrm{Fix}} 
\newcommand{\Tail}{\mathrm{Tail}} 
\title[Cycles for rational maps over function fields]{Cycles for rational maps over global function fields with one prime of bad reduction}
\author{Silvia Fabiani}
\address{{\sc Silvia Fabiani}: Universit\`a di Pisa}
\email{s.fabiani4@studenti.unipi.it}
\begin{document}


\begin{abstract}
	Let $K$ be a global function field of characteristic $p$ and degree $D$ over $\F_{p}(t)$. We consider dynamical systems over the projective line $\P^1(K)$ defined by rational maps with at most one prime of bad reduction. The main result is an optimal bound for cycle lengths that only depends on $p$ and $D$. A bound for the cardinality of finite orbits is given as well. Our method is based on a careful analysis (for every prime of good reduction) of the $\p$-adic distances between points belonging to the same finite orbit, in part motivated by previous work by Canci and Paladino. Valuable insight is provided by a certain family of polynomials. In this case we also gain a good deal of information about the structure and size of the set of periodic points for polynomials of given degree.
\end{abstract}

\maketitle


\section{Introduction}

Let $\phi \colon \P^N \rightarrow \P^N$ be an endomorphism of degree $d \geqslant 2$ defined over a global field $K$. We denote the $n$-fold composition $\phi \circ \dots \circ \phi$ with $\phi^n$. The \textit{forward orbit} of some point $P$ is the set $\{\phi^n(P) : n \geqslant 0\}$. One of the main problems in discrete dynamics is to classify points according to the behavior of their orbits. We say that $P$ is a \textit{fixed point} for $\phi$ if $\phi(P)=P$ and $P$ is \textit{periodic} if there exists a positive integer $n$ such that $\phi^n(P)= P$. The minimal $n$ satisfying the previous equality is called the \textit{minimal period} or \textit{exact period} of $P$. When $P$ is periodic we refer to its forward orbit as a \textit{cycle} and we call its cardinality \textit{length}. A \textit{preperiodic} point is a point with finite forward orbit, or equivalently a point whose orbit contains a periodic point. The set of periodic points for $\phi$ belonging to $\P^N(K)$ is denoted by $\Per(\phi, K)$. Similarly $\Fix(\phi, K)$ and $\PrePer(\phi, K)$ stand for the set of $K$-rational fixed and preperiodic points respectively.

In 1950 Northcott proved that the set of preperiodic points for a rational map over a global field has bounded height and hence it is finite (see \cite{Nor50}). Unfortunately the bound established by Northcott, though effective, depends on the coefficients of the rational map. In 1994 Morton and Silverman proposed the following conjecture (see \cite{MS94}).
\begin{conj}[Uniform Boundedness Conjecture]
Fix integers $N \geqslant 1$, $D \geqslant 1$ and $d \geqslant 2$. There exists a constant $B(N, D, d)$ such that for every number field $K$ of degree $D$ over $\Q$ and for all degree $d$ morphisms $\phi \colon \P^N \rightarrow \P^N$ defined over $K$, the number of $K$-rational preperiodic points is uniformly bounded:
\begin{equation*}
	\# \PrePer(\phi, K) \leqslant B(N, D, d).
\end{equation*} 
\end{conj}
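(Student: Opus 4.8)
Since this conjecture remains open — and not merely in general but already in the first non-trivial case $N=D=1$, $d=2$ (quadratic polynomials $x^2+c$ over $\Q$) — what I can offer is a strategy rather than a proof, together with an honest account of where it stalls. A reality check first: restricted to $\P^1$ and degree $4$, the conjecture over number fields of degree $\leq D$ already implies uniform boundedness of torsion on elliptic curves over such fields (via Latté maps: the subgroup generated by an $N$-torsion point descends, under the quotient $E\to E/\{\pm1\}=\P^1$, to $\sim N/2$ rational preperiodic points of the associated degree-$4$ map), and, by a theorem of Fakhruddin, the conjecture in general implies uniform boundedness of torsion on abelian varieties of bounded dimension over number fields of bounded degree — which is open in dimension $\geq 2$. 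Any plan must therefore subsume these. I would not try to reduce $\P^N$ to $\P^1$, since no such implication is known and the available techniques are sharpest on $\P^1$; I would concentrate there and treat general $N$ as beyond present reach.

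\textbf{The moduli-theoretic route.} For fixed $d$ and period $n$, the pairs $(\phi,P)$ with $\phi$ of degree $d$ and $P$ of exact period $n$ form an affine scheme $\mathcal{P}_{n,d}$ over the coarse moduli space $M_d$ of degree-$d$ rational maps modulo $\PGL_2$-conjugacy; bounding $\#\Per(\phi,K)$ uniformly amounts, up to bookkeeping, to showing $\mathcal{P}_{n,d}(K)=\varnothing$ for $n$ exceeding a constant depending only on $d$ and $[K:\Q]$, plus an analogous statement for the preperiodic (``tail'') parameters. The plan would be: first, prove that these parameter spaces become increasingly hyperbolic as $n\to\infty$ — the gonality of the dynatomic curves $Y_1(n)$ grows (known for $x^2+c$ by Doyle–Poonen), and the components over $M_d$ eventually have genus $\geq 2$ or are of general type (Blanc–Canci–Elkies and others); then invoke a \emph{uniform} bound on rational points on curves in terms of discrete data — Faltings, reinforced by the recent work of Dimitrov–Gao–Habegger bounding $\#C(K)$ in terms of genus, $[K:\Q]$ and the Mordell–Weil rank of $\mathrm{Jac}(C)$ — and sum over the finitely many relevant components for each $n\leq n_0$. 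Executing the first step for small $d$ is exactly what underlies the classical non-existence results (Morton: no rational $4$-cycle for $x^2+c$; Flynn–Poonen–Schaefer: no rational $5$-cycle; Stoll: no rational $6$-cycle, conditionally).

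\textbf{The reduction-mod-$\p$ route} — the one taken in the present paper. If $\phi$ has good reduction at a prime $\p$ of $K$, then reduction is essentially injective on finite orbits, and a $K$-rational cycle has length dividing $m\,\ell^{a}$ with $m\leq(\#k_\p)^2-1$, $\ell=\mathrm{char}\,k_\p$, and $a$ controlled by the ramification of $\p$ — the Morton–Silverman good-reduction bound, refined by Benedetto and by Canci and Canci–Paladino. Two primes of good reduction of coprime residue characteristic then force cycle lengths below an absolute constant, and a finer analysis of the $\p$-adic distances between points of a common orbit (the technical core, and the point at which the Canci–Paladino input recalled in the abstract enters) does the same for the tails, yielding a uniform bound on $\#\PrePer(\phi,K)$.

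\textbf{The obstacle.} The mod-$\p$ route breaks down because $\phi$ may have bad reduction at \emph{every} prime of $\mathcal{O}_K$: there is no a priori bound on the bad locus — precisely the coefficient-dependence afflicting Northcott's bound — so one cannot guarantee even a single prime of good reduction, let alone two with small coprime residue characteristics. The moduli route breaks down on its two unproven inputs: a bound on rational points genuinely free of the Mordell–Weil rank (no such uniform Mordell is known), and, behind the conditional cases above, finiteness of Tate–Shafarevich groups. Removing either obstruction — forcing a prime of good reduction to exist, or eliminating the rank from uniform Mordell — is the crux, and is why the conjecture stands open. What this paper does is to observe that over a function field with at most one prime of bad reduction a prime of good reduction with \emph{small} residue field is automatic, so the mod-$\p$ strategy above can be pushed through to an optimal, fully effective bound.
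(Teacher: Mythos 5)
The statement you were asked about is the Morton--Silverman Uniform Boundedness Conjecture, which the paper states purely as motivation and does not prove (no proof exists); your decision to present a strategy survey and an honest admission that the conjecture is open is therefore exactly the right response, and it matches the paper's own treatment. Your account of the known reductions (Latt\`es maps to Merel, Fakhruddin for abelian varieties), of the moduli/dynatomic route with its reliance on a rank-free uniform Mordell, and of the good-reduction route with the Morton--Silverman bound and its failure when no prime of good reduction is available, is accurate, as is your closing observation that the paper's actual contribution is the function-field setting with one bad prime, where a good prime with small residue field is automatic.
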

The above statement (UBC for short) is a very bold conjecture: as an example we can show that the case $N= 1$ and $d=4$ implies Merel's theorem (see \cite{Merel96}), i.e. the $K$-rational torsion points of elliptic curves defined over $K$ are bounded solely in terms of the degree $D = [K: \Q]$. Consider the multiplication by two map $[2] \colon E \rightarrow E$ on the elliptic curve $E$, then the projection on the first coordinate (here denoted by $x$) induces a rational map of degree $4$ over $\P^1(K)$ known as Latt\`es map:

\begin{center}
	\begin{tikzpicture}
	\matrix(m)[matrix of math nodes,row sep=2em,column sep=3em]
		{	E & E\\
			\P^1 & \P^1\!.\\};
	\path[->]
		(m-1-1) edge node [left] {$x$} (m-2-1)
		edge node [above] {$[2]$} (m-1-2)
		(m-2-1) edge node [below] {$\phi_{E, 2}$} (m-2-2)
		(m-1-2) edge node [right] {$x$} (m-2-2);
	\end{tikzpicture}
\end{center}

It is not difficult to see that $K$-rational torsion points of $E$ correspond to preperiodic points for $\phi_{E, 2}$: more precisely \[x(E(K)_{\mathrm{tors}}) = \PrePer(\phi_{E, 2}, K).\]
Then uniform boundedness of torsion points would be an immediate consequence of uniform boundedness of preperiodic points. Of course, running the converse argument, Merel's theorem implies uniform boundedness for the collection of Latt\`es maps (see Section 6.6 in \cite{Sil07} for the general definition). Aside from monomials and Chebyshev polynomials (both arising from endomorphisms of the multiplicative group $K^*$), that of Latt\`es maps is the only nontrivial family of rational maps for which the UBC is known to be true. Thus the conjecture seems very far from being proved, despite growing evidence that it is valid at least for quadratic polynomials over $\Q$ (see \cite{Mor98}, \cite{FPS97} and \cite{Poonen98} for a handful of results). 

In a recently published paper (see \cite{DP20})
Doyle and Poonen prove the function field analogue of uniform boundedness for polynomials of the form $z^d + c$. 
\begin{theo}[Doyle and Poonen, 2020]
	Fix $D \geqslant 1$, $d \geqslant 2$ and a field $k$ such that $\mathrm{char}(k)$ does not divide $d$. Let $K$ be the function field of an integral curve over $k$. There exists a constant $B= B(D, d, K)$ such that for every extension $L/K$ of degree $D$ and every $c \in L$ not algebraic over $k$ the number of preperiodic points in $L$ for the polynomial $z^d + c$ is uniformly bounded:
\begin{equation*}
	\# \PrePer(z^d + c,\, L) \leqslant B(D, d, K).
\end{equation*}
Moreover, if $k$ is finite the same is true even for $c$ algebraic over $k$.
\end{theo}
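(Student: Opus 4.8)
The plan is to handle the substantive case, in which $c$ is transcendental over $k$ (so that $L$ is a finite extension of $k(c)$ and $z^{d}+c$ is already a map over $k(c)$), by bounding the possible exact types of finite orbits in terms of $D$, $d$, $K$ alone, and then converting this into the stated bound. The main tool is the family of \emph{dynatomic curves}: for $m\geq 0$ and $n\geq 1$ let $X_{d}(m,n)$ be the curve---defined over $\Z[1/d]$, hence, when $\mathrm{char}\,k\nmid d$, over $k$ and over $\bar k$---parametrising pairs $(\alpha,\gamma)$ with $\alpha$ of exact preperiod $m$ and exact period $n$ for $z^{d}+\gamma$, together with the finite map $\pi_{m,n}$ to the $\gamma$-line whose degree is the generic number of such points and is of order $d^{m+n}$. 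A preperiodic point $\alpha\in L$ of exact type $(m,n)$ for $\phi=z^{d}+c$ then gives a point of $X_{d}(m,n)(L)$ lying over $c$; since $\alpha$ is in particular a root of $\phi^{m+n}(z)-\phi^{m}(z)$, there are at most $d^{m+n}$ such $\alpha$ in $L$. Hence it suffices to prove that a point of $X_{d}(m,n)(L)$ over $c$ can exist only for $m+n\leq B_{0}(D,d,K)$, for then $\#\PrePer(z^{d}+c,L)\leq\sum_{m+n\leq B_{0}}d^{m+n}$, a bound of the required shape.

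The geometric core---and the step I expect to be the main obstacle---is a \emph{uniform gonality lower bound}: the gonality of every geometric component of $X_{d}(m,n)$ should tend to $\infty$ as $m+n\to\infty$, uniformly over all base fields of characteristic not dividing $d$, and ideally with a lower bound growing like a fixed positive power of $d^{m+n}$. Large genus by itself is not enough (a curve of large genus can be hyperelliptic), so a genuine lower bound is needed. I would obtain it by degeneration: pick a prime $\ell\neq p$ at which the dynatomic tower acquires totally degenerate reduction, take a semistable model of $X_{d}(m,n)$ over the corresponding valuation ring, and pass to the dual graph $\Gamma_{m,n}$ of its special fibre; then combine the inequality bounding the gonality of a curve below by the (stable) gonality of the dual graph of a semistable model with an expansion/treewidth estimate for $\Gamma_{m,n}$, whose combinatorics records how cycles and tails of orbits break up under reduction. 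A complementary device is point counting: a curve over $\F_{q}$ of gonality $\gamma$ has at most $\gamma(q+1)$ rational points, so exhibiting many $\F_{q}$-points on a suitable reduction of $X_{d}(m,n)$ again forces $\gamma$ to be large. In either approach, controlling the combinatorics of the degenerate dynatomic curve is the real work.

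Granting the gonality growth, the return to the theorem is a descent argument. Since $X_{d}(m,n)$ is defined over $\Z[1/d]$, after extending scalars to $\bar k$ the curve $X_{d}(m,n)_{K\bar k}$ is \emph{constant}: it is the pullback of the fixed $\bar k$-curve $X_{d}(m,n)_{\bar k}$ along $\mathrm{Spec}\,K\bar k\to\mathrm{Spec}\,\bar k$. A point of exact type $(m,n)$ for $z^{d}+c$ over an extension $L/K$ of degree $\leq D$, with $c\notin\bar k$, therefore yields a \emph{non-constant} $\bar k$-morphism from a curve $X'$---the curve whose function field is generated over $K\bar k$ by the coordinates of the point, so that $X'\to X_{K}\times_{k}\bar k$ has degree $\leq D$---to a geometric component $Z$ of $X_{d}(m,n)_{\bar k}$. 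Pulling a gonality pencil of $X_{K}\times_{k}\bar k$ back along $X'\to X_{K}\times_{k}\bar k$ gives $\gamma(X')\leq D\cdot\gamma(X_{K}\times_{k}\bar k)=:\Gamma$, a constant depending only on $K$ and $D$; and a Castelnuovo--Severi / Frey-type analysis of the resulting non-constant map $X'\to Z$---using the non-isotriviality of $c$ together with the bound on $[L:K]$, in the spirit of the function field Mordell--Lang theorem available in all relevant characteristics---bounds $\gamma(Z)$ in terms of $\Gamma$ alone. Comparing this with the gonality growth of the previous paragraph forces $m+n\leq B_{0}(D,d,K)$, which finishes the case $c$ transcendental over $k$.

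The remaining assertion---$k$ finite, $c$ algebraic over $k$---needs no curves. Then $z^{d}+c$ is defined over a finite field $k'\supseteq k$, and every $\alpha\in\PrePer(z^{d}+c,L)$ satisfies a nontrivial polynomial relation over $k'$, hence is algebraic over $k'$ and so lies in the field of constants of $L$; that field is finite, of order at most $|k|^{[L:K]}\leq|k|^{D}$, which gives the bound outright. Thus the whole difficulty is concentrated in the uniform gonality estimate for the dynatomic curves $X_{d}(m,n)$; the descent and the final counting are, by comparison, formal.
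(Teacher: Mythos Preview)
The paper does not prove this statement. It is quoted in the introduction as a result of Doyle and Poonen \cite{DP20}, cited for context and motivation; the paper's own contributions concern rational maps with at most one prime of bad reduction (Theorem~\ref{Tbound} and its relatives), and no argument for the Doyle--Poonen theorem appears anywhere in the text. So there is nothing here to compare your proposal against.

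For what it is worth, your sketch is broadly aligned with the actual strategy in \cite{DP20}: one does introduce the dynatomic curves, one does need gonality (not just genus) to go to infinity, and one does exploit that these curves are defined over $\Z[1/d]$ so that over $\bar k$ they are constant, turning an $L$-point with $c\notin\bar k$ into a nonconstant map from a curve of bounded gonality. The invocation of ``function field Mordell--Lang'' and a ``Castelnuovo--Severi/Frey-type analysis'' is heavier machinery than what is actually used there: once you have a nonconstant morphism $X'\to Z$ with $\gamma(X')\le D\cdot\gamma(X_K\otimes\bar k)$, the very definition of gonality of $Z$ already forces $\gamma(Z)\le\gamma(X')$, and that is all you need. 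The case $c\in\bar k$ with $k$ finite is handled exactly as you say.
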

This result is achieved as a consequence of the growth of gonalities of dynatomic curves attached to those polynomials. This approach has the additional bonus of proving the remarkable fact (valid for polynomials $z^d + c$ over number fields) that uniform boundedness of cycle lengths implies uniform boundedness of preperiodic points. The last statement generalizes a previous theorem by Poonen (see \cite{Poonen98}) which asserts that if for every $c \in \Q$ the set $\Per(z^2 + c,\, \Q)$ contains no points of minimal period strictly larger than $3$, then $\PrePer(z^2 + c,\, \Q)$ has cardinality at most $9$.

Some mathematicians have undertaken the comparatively easier task of determining a bound for preperiodic points that depends, although weakly, on certain properties of the map $\phi$. The key ingredient here is good reduction (see Subsection \ref{subs:goodred} for details). The goal of this research is a uniform bound depending on the number of primes of bad reduction for the map $\phi$ in addition to the usual $N, D$ and $d$. For $N=1$ there are several results providing uniform bounds for periodic or preperiodic points of maps with a fixed number $s$ of bad primes. We briefly recall a few of them.

Benedetto in \cite{Ben07} proved a bound for the number of preperiodic points of polynomials over global fields that is of the order $\mathrm{O}(s \log s )$ where the big-$\mathrm O$ constant for large $s$ is essentially $(d^2 - 2d +2) / \log d$. The proof is based on the study of the filled Julia sets at every prime, thus involving techniques of both complex and non-archimedean dynamics.

Canci, Troncoso e Vishkautsan in \cite{CTV19} give a bound for preperiodic points of rational functions over number fields that is quadratic in $d$ and exponential in $s$.\footnote{There is some inconsistency in the literature about the definition of \textit{bad prime}. Some authors characterize bad reduction as absence of potentially good reduction as in \cite{Ben07}, while others (e.g. \cite{CTV19}) say that a map has bad reduction at a prime if it has not (simple) good reduction. In this paper we will adopt the second point of view: see Definition \ref{def_goodred}.} This result depends on a reduction to unit equations with a number of solutions uniformly bounded by \cite[Theorem 1.1]{ESS02}. Similar techniques have been applied to find bounds for the cardinality of finite orbits in terms of the number of bad primes, even for global function fields, as in \cite{CP16}. 

In this paper we focus our attention on rational maps defined over a global function field with bad reduction at just one prime. We are going to study cycle lengths and the cardinality of finite orbits: in our setting we are able to find much better bounds than those obtained by specializing the results in \cite{CP16} to $s=1$. We remark that our bound for cycle lengths is optimal (see Example \ref{ExOptCycLenght}). We summarize these results (contained in Corollary \ref{Cbound0} and Theorem \ref{TOrbits}) in the following theorem, which improves previous work by Canci and Paladino (see \cite[Theorem 1.2]{CP16b}).
 
\begin{theo}\label{Tbound}
    Let $\phi \colon \P^1\rightarrow \P^1$ be a rational map defined over a global function field $K$ with bad reduction at one prime. Let $p$ be the characteristic of the field and $D$ the degree of the extension $K/\F_p(t)$.
    \begin{enumerate}
    	\item If $\mathcal C \subset \P^1(K) $ is a cycle for $\phi$, then $\# \mathcal C\leqslant p^D + 1$.
    	\item If $\mathcal A \subset \P^1(K) $ is a finite orbit for $\phi$, then $\# \mathcal A \leq 3p^D + 6$.
    \end{enumerate}
\end{theo}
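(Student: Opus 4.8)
The plan is to play off two facts against each other: the arithmetic of reduction at a cleverly chosen prime, and the strong rigidity coming from having (at most) one prime of bad reduction. Let $S$ be the set of primes of bad reduction, so $\#S\leq 1$. Since $K\supseteq\F_p(t)$ with $[K:\F_p(t)]=D$, the rational function field $\F_p(t)$ has its $p+1\geq 3$ places of degree one, at most one of which can lie below the (at most one) bad prime; choosing one of the remaining degree-one places $v$ and a place $\p$ of $K$ above it produces a prime $\p$ of good reduction whose residue field satisfies $\#\F_\p\leq p^{D}$. On the other hand, precisely because $\#S\leq 1$, the $S$-unit group $\mathcal O_S^{\times}$ has rank $\#S-1\leq 0$, i.e.\ it is just the finite group of constants of $K$, which has order at most $p^{D}$; in particular the $S$-unit equation $u+v=1$ has at most $p^{D}-2$ solutions. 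This last point is what must take over, in the function field setting, the role played in the Morton--Silverman argument by two primes of distinct residue characteristic: here every residue characteristic is $p$, so that device is unavailable.

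For the cycle bound, let $\mathcal C=\{P_0,\dots,P_{n-1}\}$ with $P_{i+1}=\phi(P_i)$ be a cycle of minimal length $n$ (we may assume $n\geq 4$, as shorter cycles are trivially $\leq p^{D}+1$). For every prime $\q\notin S$ the chordal metric satisfies $\delta_\q(\phi x,\phi y)\leq\delta_\q(x,y)$, and traversing the cycle forces equality $\delta_\q(P_{i+1},P_{j+1})=\delta_\q(P_i,P_j)$; hence $\delta_\q(P_i,P_j)$ depends only on $\q$ and on $i-j\bmod n$, and for each $\q$ the resulting translation-invariant ultrametric on $\Z/n\Z$ is encoded by a nested chain of subgroups. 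Reducing modulo $\p$ collapses $\mathcal C$ onto a cycle of $\bar\phi$ inside $\P^1(\F_\p)$ of length $m\mid n$, so $m\leq\#\P^1(\F_\p)\leq p^{D}+1$, with $P_i\equiv P_j\pmod\p$ exactly when $m\mid i-j$. If $n=m$ we are done; otherwise I would study $g=\phi^{m}$ on the residue disc of $\p$ through $P_0$, where $P_0,P_m,\dots,P_{(k-1)m}$ (with $k=n/m\geq 2$) form a $g$-orbit. Combining the chain structure of the $\delta_\q$ with the product formula, one extracts from four suitably placed points of $\mathcal C$ cross-ratios $\rho$ with $\rho,1-\rho\in\mathcal O_S^{\times}$; minimality of $n$ keeps enough of these genuinely distinct, and a careful count, in which the auxiliary family of polynomials supplies exactly the extremal residue-disc dynamics (the parabolic/order-$p$ behaviour and the maps inducing a full cyclic permutation of $\P^1(\F_q)$), sharpens the estimate to $n\leq p^{D}+1$; Example~\ref{ExOptCycLenght} shows this is optimal.

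A finite orbit is a cycle $\mathcal C$ with a tail $R_t\to\cdots\to R_1\to\mathcal C$, so its cardinality is $t+\#\mathcal C$ with $\#\mathcal C\leq p^{D}+1$ by the previous paragraph, and it remains to bound $t$. The same machinery applies: reduction modulo $\p$ bounds the tail of $\bar\phi$ inside $\P^1(\F_\p)$ by $\#\P^1(\F_\p)$, and inside each residue disc one again produces $S$-unit equations from cross-ratios formed by two tail points and two cycle points, yielding a bound of the shape $t\leq 2p^{D}+5$ and hence $\#\mathcal A\leq 3p^{D}+6$. The main obstacle throughout is the cycle estimate in the regime $k\geq 2$: since wild ($p$-power) growth inside a residue disc cannot be suppressed by switching residue characteristic, the entire weight of the argument rests on the minuscule size of $\mathcal O_S^{\times}$, and the delicate point is quantitative — showing that each extra layer of $\p$-adic coincidences among the orbit points genuinely costs a fresh $S$-unit, and that the worst configurations are precisely the ones realised by the polynomial family. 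This is what ultimately pins the constant down to $p^{D}+1$.
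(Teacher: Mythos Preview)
Your setup is right: picking a good prime $\p$ with $\#k(\p)\le p^{D}$ and exploiting that $\mathcal O_S^{\times}$ is finite are exactly the two ingredients. But the core of the cycle argument is missing. You reach the point where $\delta_{\q}(P_i,P_j)$ depends only on $\gcd(i-j,n)$ (this is the standard Morton--Silverman translation invariance), split into $m=n$ versus $m<n$, and then in the latter case write that ``a careful count'' with cross-ratios and $S$-unit equations ``sharpens the estimate to $n\le p^{D}+1$''. That sentence is the whole proof, and it is not there. There is no indication of which four points you take, why the resulting cross-ratios are distinct, or why the count comes out to exactly $p^{D}+1$ rather than something worse (as it does in the earlier Canci--Paladino work this paper improves).

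The paper avoids the case split entirely by proving something much stronger than translation invariance: for every good prime $\p$, the distance $\delta_{\p}(P,Q)$ is \emph{the same} for every pair of distinct points $P,Q$ in the cycle (Theorem~\ref{T1}). Once you have that, Lemma~\ref{L1new} gives $\#\mathcal C\le\#k(\p)+1$ in one line, with no residue-disc analysis at all. The proof of Theorem~\ref{T1} (via Lemma~\ref{LT1}) writes $P_i=[A_ix_1:y_i]$ with $A_i\in\sqrt{\mathcal O_S}$ and shows each $A_i$ lies in $\F^{*}$; the argument genuinely uses that $\sqrt{\mathcal O_S^{\times}}=\F^{*}$ is the unit group of a \emph{field} (so that a nonzero difference of two units is again a unit), and proceeds by a somewhat intricate case analysis on the $2$-adic valuation of $n$. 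This is the step you would need to replace, and your sketch does not.

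For the finite-orbit bound your outline is again too loose. The paper does use an $S$-unit/three-points mechanism here (Lemma~\ref{L3pts}), but in a structured way: for $n\ge 4$ it shows distinct tail points reduce to distinct points modulo every good prime (Lemma~\ref{Lfinite}), giving at most $\#k(\p)+1$ tail points and hence $\#\mathcal A\le 2(\#k(\p)+1)$; for $n\le 3$ it passes to $\phi^{n}$ and bounds each of the $n$ resulting fixed-point orbits by $\#k(\p)+2$ (Lemma~\ref{Lfixed}), giving $\#\mathcal A\le 3(\#k(\p)+2)$. Your claimed tail bound $t\le 2p^{D}+5$ is neither derived nor what the paper actually proves.
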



We conclude with a brief overview of the contents of this paper. 
In Section \ref{Spol} we consider the particular case of the family of polynomials with coefficients in a ring of integers of a global function field and invertible leading coefficient.
In this setting we are able to prove that the length of a cycle cannot exceed the cardinality of the full constant field (Theorem \ref{Tpol}) and show that this result is optimal (Example \ref{ExOptCycLenght0}). In addition we give a bound for the number of periodic points for polynomials of degree $d$ that is linear in $d$ (Theorem \ref{Tpol3}). We also remark that this last result is in some sense \textit{good}, by showing that the number of periodic points cannot be bounded independently of $d$ (Example \ref{ExDepOnd}). After that we turn to rational functions with one prime of bad reduction: the relevant definitions are given in Sections \ref{S3} and \ref{preliminaries}.
Interestingly enough, some of the properties of the polynomials considered in Section \ref{Spol} can be generalized to rational maps, the main difference being that the proofs will be more involved and the results (generally speaking) way less effective. This analogy is explored in Sections \ref{Sproof} and \ref{Sdbound}. We prove (Theorem \ref{T1}) that the $\p$-adic distance between any two distinct points belonging to the same cycle is constant for every fixed prime of good reduction $\p$. Similar statements (proven in Section \ref{Sfinite}) are true for finite orbits and these results are the main step towards the proof of Theorem \ref{Tbound}. Further applications are given in Section \ref{Sdbound}.


\section{Polynomials over global function fields}\label{Spol}

Let $K$ be a global function field of characteristic $p$. We fix one prime at infinity and denote with $\mathcal{O}$ its ring of integers, i.e. the ring of rational functions regular outside $\infty$. The full constant field of $K$ is the algebraic closure of $\F_p$ in $K$, hence a finite field $\F_q$, and the group of units $\mathcal{O}^*$ coincides with $\F_{q}^*$. The aim of this section is to study some dynamical properties of polynomials in $\mathcal{O}[X]$ with leading coefficient in $\mathcal{O}^*$. These polynomials are far more manageable than rational functions and the proofs do not require specific tools. Nonetheless we shall see that some of the techniques can be generalized to rational maps with exactly one prime of bad reduction.

In this section only we will consider polynomials as maps over $K$ instead of $\P^1(K)$, because the point at infinity is obviously fixed.
Our first result is a bound for cycle lengths: the argument is not new and can also be found in \cite[Section 2.2]{Zieve96} in the more general context of polynomials defined over integral domains. However, to the author's knowledge, it has not been applied to this particular setting yet.

We begin by showing that linear polynomials are dynamically uninteresting, since all points are periodic.
\begin{lemma}\label{Lpol}
    If $\phi: K \rightarrow K$ is a linear polynomial in $\mathcal{O}[X]$ with leading coefficient in $\mathcal{O^*}$, then there exists $n \leqslant q$ (actually $\leqslant q-1$ if $q>p$) such that $\phi^n(X)=X$ as polynomials.
\end{lemma}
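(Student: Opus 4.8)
The plan is to write $\phi(X) = aX + b$ with $a \in \mathcal{O}^* = \F_q^*$ and $b \in \mathcal{O}$, to compute the iterates $\phi^n$ explicitly as polynomials, and then to choose $n$ by exploiting the finiteness of $\F_q^*$ together with the fact that $K$ has characteristic $p$. First I would establish, by an immediate induction on $n$, the formula
\[
\phi^n(X) = a^n X + (a^{n-1} + a^{n-2} + \cdots + a + 1)\,b,
\]
the inductive step being just $\phi^{n+1}(X) = \phi(\phi^n(X)) = a\bigl(a^n X + (a^{n-1} + \cdots + 1)b\bigr) + b$.

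Then I would split into two cases. If $a = 1$, the formula reduces to $\phi^n(X) = X + nb$, and since $\mathcal{O}$ has characteristic $p$ we get $pb = 0$, hence $\phi^p(X) = X$; this yields $n = p \leq q$, and if moreover $q > p$ then $q$, being a power of $p$, satisfies $q \geq p^2$, so $p \leq q - 1$. If instead $a \neq 1$, then $a$ lies in the cyclic group $\F_q^*$ of order $q - 1$, so $a^{q-1} = 1$. From the telescoping identity $(a - 1)(a^{q-2} + \cdots + a + 1) = a^{q-1} - 1 = 0$, together with $a - 1 \neq 0$ and the fact that $K$ (hence $\mathcal{O}$) is an integral domain, one deduces $a^{q-2} + \cdots + a + 1 = 0$, and therefore $\phi^{q-1}(X) = X$; here $n = q - 1 \leq q$, which also settles the sharper bound in this case.

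There is no genuine difficulty: the computation is routine, and the only points deserving a moment's attention are the vanishing of the geometric sum $a^{q-2} + \cdots + a + 1$, for which one uses that we work in a field together with $a \neq 1$, and the elementary remark that $q > p$ forces $q \geq p^2$ and hence $p \leq q - 1$, which is exactly what makes the improved bound hold when $q > p$.
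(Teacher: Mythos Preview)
Your proof is correct and follows essentially the same approach as the paper: split into the cases $a=1$ and $a\neq 1$, and use the characteristic in the first case and the finiteness of $\F_q^*$ together with the vanishing of the geometric sum in the second. The only cosmetic difference is that in the case $a\neq 1$ the paper takes $n$ to be the multiplicative order of $a$ in $\F_q^*$ rather than $q-1$ itself; either choice works and yields $n\leq q-1$.
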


\begin{proof}
	If $\phi$ is monic and linear, then $\phi^p(X)=X$. Next assume $\phi(X)= wX + a$ with $w \neq 1$ and let $n$ be the multiplicative order of $w$ in $\F_q^*$. Then
\begin{equation*}
	\phi^n(X)=w^nX+ (1+w+ \dots + w^{n-1})a= X.\qedhere
\end{equation*} 
\end{proof}

\begin{theo}\label{Tpol}
	If $\phi \colon K \rightarrow K$ is a polynomial in $\mathcal{O}[X]$ with leading coefficient in $\mathcal{O}^*$ and $\deg(\phi)\geqslant 2$, then all $K$-periodic points are in $\mathcal{O}$ and have minimal period at most $q$.
\end{theo}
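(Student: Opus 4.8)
The plan is to argue in two parts: first that $K$-periodic points must lie in $\mathcal{O}$, and then that the minimal period is at most $q$. For the first part, I would use that $\phi \in \mathcal{O}[X]$ has leading coefficient a unit, so that for any prime $\p$ of $\mathcal{O}$ the reduction $\bar\phi$ still has degree $\deg(\phi) \geq 2$; in particular, if $|x|_\p > 1$ for some non-archimedean absolute value attached to a prime of $\mathcal{O}$, then $|\phi(x)|_\p = |x|_\p^{\deg\phi} > |x|_\p$, so the forward orbit has strictly increasing $\p$-adic absolute value and $x$ cannot be periodic. Hence every periodic point is $\p$-integral at every such prime, i.e. lies in $\mathcal{O}$.

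For the bound on the period, let $P_0, P_1 = \phi(P_0), \dots, P_{n-1}$ be a cycle of exact length $n$, all lying in $\mathcal{O}$ by the previous paragraph. The key observation (this is the classical telescoping trick, as in \cite[Section 2.2]{Zieve96}) is that for any $a, b \in \mathcal{O}$ the difference $a - b$ divides $\phi(a) - \phi(b)$ in $\mathcal{O}$, since $\phi$ has coefficients in $\mathcal{O}$. Applying this around the cycle, $P_i - P_j \mid P_{i+1} - P_{j+1}$ for all indices (read cyclically), and iterating all the way around shows that $P_{i} - P_{j}$ and $P_{i+1} - P_{j+1}$ are associates in $\mathcal{O}$, hence differ by a unit in $\mathcal{O}^* = \F_q^*$. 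In particular, fixing $j$ and letting $d_i := P_{i+1} - P_i$, all the consecutive differences $d_0, d_1, \dots, d_{n-1}$ are pairwise associate, so $d_i = u_i d_0$ with $u_i \in \F_q^*$.

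Now I would sum around the cycle: $\sum_{i=0}^{n-1} d_i = \sum_{i=0}^{n-1}(P_{i+1} - P_i) = 0$, which gives $\left(\sum_{i=0}^{n-1} u_i\right) d_0 = 0$ in $\mathcal{O}$. Since the $P_i$ are distinct we have $d_0 \neq 0$, so $\sum_{i=0}^{n-1} u_i = 0$ in $\F_q$ with each $u_i \in \F_q^*$. If $n > q$ this is certainly possible in general, so this bare sum is not quite enough; the finer point is that one can choose the normalization so that $u_0 = 1$ and, more importantly, use that the $u_i$ are forced to be \emph{distinct} — indeed if $u_i = u_{i'}$ then $P_{i+1} - P_i = P_{i'+1} - P_{i'}$, and chasing the divisibility relations one more step (or comparing $P_i$ and $P_{i'}$ directly via the same associate argument) forces $P_i = P_{i'}$, hence $i = i'$ within one period. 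Since the $u_i$ are then $n$ distinct elements of $\F_q^*$, we get $n \leq q - 1$; to recover the stated bound $n \leq q$ one handles separately the degenerate case where $d_0$ itself is fixed up to the full constant field — more carefully, the clean statement is that the $n$ ratios $P_i - P_0$ (for $i \neq 0$) together with $0$ form $n$ distinct elements of a single coset structure inside $\F_q \cdot (\text{something})$, giving $n \leq q$.

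The main obstacle will be making the last counting step fully rigorous: showing that the associate-differences are genuinely distinct (so that one is counting $n$ distinct units, not merely $n$ units summing to zero) requires carefully tracking the chain of divisibilities $P_i - P_j \mid P_{i+1} - P_{j+1}$ around the full cycle and arguing that a repetition of differences propagates backwards to a repetition of points. Once that injectivity is in hand, the bound $n \leq q$ (respectively $n \leq q-1$ when $q > p$, matching the linear case in Lemma \ref{Lpol}) follows from counting in $\F_q$, and the integrality of the periodic points is the easy half.
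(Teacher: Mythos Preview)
Your integrality argument is fine (the paper does it via integral closure, observing that $\phi^n(X)-X$ has unit leading coefficient, but your escape-rate argument works too). The gap is in the counting step. Your claim that the consecutive-difference ratios $u_i = d_i/d_0 \in \F_q^*$ are \emph{distinct} is false: take any polynomial realizing the full cycle $0 \mapsto 1 \mapsto \cdots \mapsto q-1 \mapsto 0$ in $\F_q$ (Example~\ref{ExOptCycLenght0} constructs these). Then every $d_i = 1$, so all $u_i = 1$. Your proposed deduction ``$u_i = u_{i'} \Rightarrow P_i = P_{i'}$'' simply does not follow from the divisibility chain, and the consequence $n \le q-1$ you draw from it contradicts this very example.

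The fix is exactly what you gesture at in your last sentence but never commit to: look not at $d_i/d_0$ but at $w_i := (P_i - P_0)/(P_1 - P_0)$ for $i = 1,\dots,n$. Each $w_i$ is a telescoping sum $u_1 + \cdots + u_i$ of the consecutive ratios, hence lies in $\F_q$ (not just $\F_q^*$: note $w_n = 0$); and these $w_i$ are distinct because the $P_i$ are. That gives $n$ distinct elements of $\F_q$, hence $n \le q$. This is precisely the paper's argument, and it is the ``finer point'' you were looking for --- the injectivity comes for free once you track $P_i - P_0$ rather than $P_{i+1} - P_i$.
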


\begin{proof}
    Let $x\in K$ be a periodic point of exact period $n$, then it satisfies $\phi^n(x)- x= 0$ and thus it is a root of a monic polynomial with coefficients in $\mathcal{O}$. Then $x$ is in $\mathcal{O}$ and so are all the points in its forward orbit $\phi^i(x)$ for $i\geqslant 0$ (we put $\phi^0(x) = x$). Moreover for any distinct $x, y \in \mathcal{O}$ one has $(y-x)\mathcal{O} \supseteq (\phi(y) - \phi(x)) \mathcal{O}$, in particular:
\begin{equation*}
	(\phi(x) - x)\mathcal{O} \supseteq (\phi^2(x) - \phi(x)) \mathcal{O} \supseteq \dots \supseteq (x - \phi^{n-1}(x)) \mathcal{O} \supseteq (\phi(x) -x ) \mathcal{O}.
\end{equation*}
	Thus the above ideals coincide and for every $i =1,\dots,n$ there exists $u_i \in \mathcal{O}^*= \F_q^*$ such that $\phi^i(x) - \phi^{i-1}(x)= u_i (\phi(x) - x)$ . Then for every index $i$ 
	\begin{equation*}
	\phi^i(x) - x= (\phi^i(x) - \phi^{i-1}(x))+ (\phi^{i-1}(x)- \phi^{i-2}(x))+ \dots + (\phi(x) - x)=w_i(\phi(x) - x)
	\end{equation*}
	where
	\begin{equation*}
	w_i= \dfrac{\phi^i(x) - x}{\phi(x) - x}, \quad \text{for } i = 1, \dotsc, n
	\end{equation*}
	are distinct elements in $\F_q$, hence the thesis.
\end{proof}

For instance, every monic polynomial with coefficients in $\F_q[t]$ has cycles of length at most $q$ in $\F_q(t)$. This follows from Theorem \ref{Tpol} by putting $K = \F_q(t)$ and $\infty = \frac{1}{t}$, i.e. the prime attached to the valuation ${v}(f(t)/g(t))=\deg(g) - \deg (f)$.

\begin{rem}\label{Rqp^D}
	It is worth mentioning that the previous statement yields a bound in terms of $p$ and the degree of the extension $K/\F_p(t)$. Indeed, let $D=[K: \F_p(t)]$, then $\F_q\subseteq K$ yields $n\leqslant q\leqslant p^D$ in the estimate.
\end{rem}
The bound given in Theorem \ref{Tpol} is sharp for every global function field. More precisely, there exist monic polynomials defined over the finite field $\F_q$ with a unique cycle of length $q$. We may obviously consider these as dynamical systems defined over any field $K$ with $\F_q$ as full constant field.

\begin{example}\label{ExOptCycLenght0}
    Let $f \colon \F_q \rightarrow \F_q$ be any map, then $f$ is induced by (exactly) one polynomial $\phi \in \F_q[X]$ of degree smaller than $q$. Indeed, the obvious map 
	$\F_q[X] \rightarrow \{\text{maps $\F_q\rightarrow \F_q$}\}$ induces a ring isomorphism $\F_q[X]/(X^q-X) \cong \{\text{maps $\F_q\rightarrow \F_q$}\}$. Then for a fixed $f$ we may take $\phi$ to be the unique representative of $f$ of degree less than $q$.
	Observe (this will be useful in Section \ref{Sproof}) that we can modify $\phi$ to get a monic polynomial of any degree $d \geq q$ by simply adding a monic multiple of $X^q-X$. 
	In particular there exist monic polynomials of assigned degree $d \geqslant q$ with periodic orbit
	\[w_0 \mapsto w_1 \mapsto \cdots \mapsto w_{q-1} \mapsto w_0\]
	where the $w_i$'s are the elements of $\F_q$. 
\end{example}

As previously remarked, linear polynomials are not of much interest (from a dynamical viewpoint) because all points are periodic. From now on we will then assume $\deg(\phi) \geqslant 2$. We have another application of the ideas contained in the proof of Theorem \ref{Tpol}.

\begin{theo}\label{Tpol3}
    If $\phi \in \mathcal{O}[X]$ is a polynomial of degree $d \geqslant 2$ with leading coefficient in $\mathcal{O}^*$, then $\# \Per(\phi, K) \leqslant (q-1) (d-1) + 1.$ 
\end{theo}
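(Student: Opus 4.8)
The plan is to reuse the ideal-theoretic observation from the proof of Theorem \ref{Tpol}, but this time applied to pairs of periodic points lying in possibly different cycles, rather than to consecutive points of a single cycle. First I would invoke Theorem \ref{Tpol} to know that every $K$-periodic point of $\phi$ lies in $\mathcal{O}$, together with its whole forward orbit; this is what makes differences of periodic points and of their iterates meaningful as elements of $\mathcal{O}$. If $\Per(\phi, K)$ is empty there is nothing to prove, so I would fix one periodic point $x_0$.

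Next, given any periodic point $y \neq x_0$, with exact periods $n_{x_0}$ and $n_y$, set $N = n_{x_0}n_y$, so that $\phi^N(x_0) = x_0$ and $\phi^N(y) = y$. Since $\phi \in \mathcal{O}[X]$, the element $y - x_0$ divides $\phi(y) - \phi(x_0)$ in $\mathcal{O}$, which gives the chain
\[(y - x_0)\mathcal{O} \supseteq (\phi(y) - \phi(x_0))\mathcal{O} \supseteq \dots \supseteq (\phi^N(y) - \phi^N(x_0))\mathcal{O} = (y - x_0)\mathcal{O}.\]
Hence all these ideals coincide and $\phi(y) - \phi(x_0) = u(y)\,(y - x_0)$ for some $u(y) \in \mathcal{O}^* = \F_q^*$.

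Then I would sort the periodic points $y \neq x_0$ according to the value of $u(y)$. For a fixed $c \in \F_q^*$, every such $y$ with $u(y) = c$ is a root of
\[g_c(X) = \phi(X) - \phi(x_0) - c(X - x_0) \in \mathcal{O}[X],\]
a polynomial of degree exactly $d$ (the leading coefficient of $\phi$ does not vanish and $d \geq 2$). Since $g_c(x_0) = 0$, we may factor $g_c(X) = (X - x_0)h_c(X)$ with $\deg h_c = d - 1$, and the periodic points $y \neq x_0$ with $u(y) = c$ are roots of $h_c$, so there are at most $d-1$ of them. Summing over the $q-1$ values of $c$ and adding back $x_0$ itself yields $\#\Per(\phi, K) \leq (q-1)(d-1) + 1$.

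I do not expect a genuine obstacle here. The only point requiring a little care is the passage from a single cycle (as handled in the proof of Theorem \ref{Tpol}) to an arbitrary pair of periodic orbits, which is dealt with simply by working with a common period $N$; and the small but crucial bookkeeping observation that $x_0$ is automatically a root of each $g_c$, so the relevant degree drops from $d$ to $d-1$ — this is precisely what produces the sharp linear-in-$d$ constant.
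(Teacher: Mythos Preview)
Your argument is correct and follows essentially the same approach as the paper: fix a periodic point, use the ideal chain to force $\phi(y)-\phi(x_0)=u(y)(y-x_0)$ with $u(y)\in\F_q^*$, and then count roots of the $q-1$ polynomials after factoring out $X-x_0$. The only cosmetic differences are that the paper uses a minimal common period rather than the product $n_{x_0}n_y$, and leaves the final root-counting step (``the bound follows immediately'') implicit where you spell it out.
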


\begin{proof}
	By Theorem \ref{Tpol} all periodic points belong to $\mathcal O$. Fix two distinct periodic points $x$ and $y$ and take a minimal $n$ such that both points are periodic of period $n$. Then
\begin{equation*}
(x - y)\mathcal{O} \supseteq (\phi(x) - \phi(y)) \mathcal O \supseteq \cdots \supseteq (\phi^{n-1}(x) - \phi^{n-1}(y)) \mathcal{O} \supseteq (x - y)\mathcal{O}
\end{equation*}
as in the proof of Theorem \ref{Tpol}.
Then $x$ is a solution of one of the $q-1$ equations of degree $d$
\begin{equation*}
\phi(X) - \phi(y) = u (X - y) \quad \text{with $u \in \F_q^*$}
\end{equation*}
and $y$ satisfies all of them. The bound follows immediately.
\end{proof}

\begin{rem}
The proof of Theorem \ref{Tpol3} works for polynomials with coefficients in any integrally closed domain $R$,
provided $R^*$ is finite. The bounds have to be modified in the obvious way replacing $q-1$ with $\#R^*$. The situation is more complicated for Theorem \ref{Tpol}, where $R^*$ needs to be the group of units of a finite field. 
\end{rem}

The key point in the proof of Theorem \ref{Tpol} is that if $x$ is periodic of minimal period $n$ then for every $i=1, \dots, n-1$ 
\begin{equation}\label{eqpol}
\phi^i (x) -x=u_i(\phi(x)-x) \quad \text{for some $u_i \in \F_q^*$.}
\end{equation}
It follows that the difference between any two distinct points in the cycle of $x$ is equal to $\phi(x) - x$ up to multiplication by a unit. This remark partly motivates the following proposition.

\begin{prop}\label{Ppol}
	Let $\phi \in K[X]$ be a polynomial of degree $d$.
	Assume that $\mathcal P$ and $\mathcal P'$ are subsets of $K$ satisfying:
	\begin{enumerate}[label=\textnormal{(\alph{enumi})}]
	 \item the differences $y - x$ with $(x, y)$ varying over the pairs of distinct elements in $\mathcal{P}$ are all equal up to multiplication by elements of $\F_q^*$;
	 \item $\mathcal P' \cup \phi(\mathcal P') \subseteq \mathcal P$.
	\end{enumerate}
Then the following properties hold:
	\begin{enumerate}
		\item $\# \mathcal{P} \leq q$ (only \textnormal{(a)} is needed for this);
		\item either $\# \mathcal{P'} \leqslant d$ or $\phi$ is conjugated over $K$ to a polynomial with coefficients in $\F_q$.
	\end{enumerate} 
\end{prop}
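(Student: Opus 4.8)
The plan is to treat the two parts separately, the common tool being a single affine change of coordinates defined over $K$; for part (2) this reduces the statement to uniqueness of polynomial interpolation, while part (1) is handled directly from hypothesis (a) alone.

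For part (1): if $\#\mathcal P\leq 1$ there is nothing to prove, so fix distinct $x_0,y_0\in\mathcal P$ and put $\delta=y_0-x_0\in K^{*}$. By hypothesis (a) every difference of distinct elements of $\mathcal P$ lies in $\delta\,\F_q^{*}$, so in particular $x-x_0\in\delta\,\F_q^{*}$ for all $x\in\mathcal P\setminus\{x_0\}$. Hence the affine map $x\mapsto (x-x_0)/\delta$ is injective on $\mathcal P$ and carries it into $\{0\}\cup\F_q^{*}=\F_q$; therefore $\#\mathcal P\leq q$ (and $\mathcal P$ is in particular finite). Note also that here we may take $\delta$ to be any single difference of distinct elements of $\mathcal P$, so that speaking of ``the common value of the differences up to $\F_q^{*}$'' is legitimate.

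For part (2): assume $\#\mathcal P'>d$; then $\mathcal P'$, hence $\mathcal P$, is nonempty, so pick $x_0\in\mathcal P$ and let $\delta\in K^{*}$ be such a common value of the differences. Conjugate by $\psi(X)=\delta X+x_0\in K[X]$, setting $\tilde\phi=\psi^{-1}\circ\phi\circ\psi\in K[X]$, a polynomial of degree $d$; here $\psi^{-1}(z)=(z-x_0)/\delta$. Exactly as in part (1), hypothesis (a) gives $\psi^{-1}(\mathcal P)\subseteq\F_q$, and then (b) yields $S:=\psi^{-1}(\mathcal P')\subseteq\F_q$ together with $\tilde\phi(S)=\psi^{-1}(\phi(\mathcal P'))\subseteq\psi^{-1}(\mathcal P)\subseteq\F_q$. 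Since $\#S=\#\mathcal P'>d=\deg\tilde\phi$, it now suffices to prove the elementary fact that a polynomial $g\in K[X]$ with $\deg g<\#S$ which takes values in $\F_q$ at every point of a set $S\subseteq\F_q$ must lie in $\F_q[X]$: the Lagrange interpolation polynomial $h\in\F_q[X]$ through the points $(s,g(s))_{s\in S}$ has degree $<\#S$ and coefficients in $\F_q$ (its construction uses only those data and field operations), while $g-h$ has degree $<\#S$ and vanishes on all of $S$, forcing $g=h$. Applying this with $g=\tilde\phi$ shows $\tilde\phi\in\F_q[X]$, i.e. $\phi$ is conjugated over $K$ to a polynomial with coefficients in $\F_q$.

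I do not expect a genuine obstacle here. The only points requiring care are that the conjugating map $\psi$ is defined over $K$ — which holds because $x_0,\delta\in K$ — and that the reduction in part (2) really does place both $\mathcal P'$ and $\phi(\mathcal P')$ inside $\F_q$ after conjugation, which is precisely the place where hypotheses (a) and (b) are used in tandem. Everything else is bookkeeping around the interpolation lemma.
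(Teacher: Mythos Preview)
Your proof is correct and follows essentially the same approach as the paper: both use the affine change of variables $X\mapsto (X-x_0)/\delta$ to push $\mathcal P$ into $\F_q$, and then invoke uniqueness of degree-$d$ interpolation through $d+1$ points in $\F_q$ (you phrase it via Lagrange interpolation, the paper via the Vandermonde system, which amounts to the same thing).
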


\begin{proof}
	For the first part, let $x_0, \dotsc, x_{n-1}$ be $n$ distinct points in $\mathcal{P}$, then for $i=1, \dotsc , n-1$
	\begin{equation}\label{equnit}
	x_i-x_0= u_i(x_1-x_0) \quad \text{with $u_i \in \F_q^*$ }
	\end{equation}
	and the $u_i$'s are distinct elements of $\F_q^*$, thus proving (a).
	
	Now let $\mathcal{P}= \{x_0, \dotsc , x_{n-1}\}$ and consider the affine invertible map over $K$ defined by 
	\[\eta(X)= \dfrac{X-x_0}{x_1 - x_0}\,,\]
	then by \eqref{equnit} the image $\eta(\mathcal{P})$ is contained in $\F_q$. Next assume $\mathcal{P'} \subseteq \mathcal P$ contains at least $d+1$ elements and $\phi(\mathcal{P'}) \subseteq \mathcal{P}$; without loss of generality we may assume $\mathcal P'= \{x_0, \dotsc, x_d\}$. Consider the polynomial $\psi 
	= \eta\circ\phi\circ\eta^{-1}\in K[X]$ which still has degree $d$ and satisfies
	\[ \psi(u_i)=\dfrac{\phi(x_i)-x_0}{x_1-x_0} = y_i\in \F_q\,.\]
Write $\psi(X)=\sum_{i=0}^d a_i X^i$, then the vector $(a_0, \dotsc, a_d)$ is a solution of the linear system with invertible Vandermonde matrix

\begin{equation*}
\left[\begin{array}{cccc}
1 & u_0 & \cdots & u_0^d\\
1 & u_1 & \cdots & u_1^d\\
\vdots & \vdots & &	\vdots\\
1 & u_d & \cdots & u_d^d\\
\end{array}\right]
\left[\begin{array}{c}
a_0\\
a_1\\
\vdots\\
a_d
\end{array}\right]=
\left[\begin{array}{c}
y_0 \\
y_1\\
\vdots\\
y_d
\end{array}\right].
\end{equation*}
	Moreover both the vector $(y_0, \dotsc , y_d)$ and the matrix have entries in the finite field $\F_q$, therefore the system admits a unique solution in $\F_q^{d+1}$ corresponding to $\psi \in \F_q[X]$.
\end{proof}

We will always apply Proposition \ref{Ppol} with $\mathcal P = \mathcal P'$ a $\phi$-invariant set, i.e. a set satisfying $\phi(\mathcal P ) \subseteq \mathcal P$.
Now let $x, y$ be elements of $K$; to ease notation we will write $x \sim y$ if $y = ux$ for some $u \in \mathcal{O}^*$. Then it is not difficult to see that
\begin{enumerate}
    \item $\sim$ is an equivalence relation;
    \item if $x,y,z$ are distinct elements of $K$, then $y-x \sim z - x$ implies $y-x \sim y-z$.
\end{enumerate}   
It immediately follows that for a subset $\mathcal{P}$ of $K$ the following properties are equivalent:
\begin{enumerate}
	\item [(a)] the differences $y - x$ with $(x, y)$ varying over the pairs of distinct elements in $\mathcal{P}$ are all equal up to multiplication by units;
	\item [(a')]fix $y \in \mathcal{P}$, then the differences $y-x$ with $x \neq y$ varying over the elements of $\mathcal{P}$ are all equal up to multiplication by units.
\end{enumerate}
When needed, we will always check condition (a') instead of (a).

\begin{rem}\label{Rcycle}
	Let $\phi \in \mathcal O[X]$ be a polynomial of degree $d \geq 2$ with invertible leading coefficient, then by equation \eqref{eqpol} we may apply Proposition \ref{Ppol} to a cycle $\mathcal{P}$. Then the length of the cycle is at most $d$ unless $\phi$ is conjugated to some polynomial defined over $\F_q$. In that case (after a suitable change of coordinates) $\PrePer(\phi, K)$ equals $\F_q$: indeed every point in $\F_q$ is preperiodic because $\F_q$ is finite and $\phi$-invariant. Conversely, if $x \in K$ is preperiodic for $\phi$ then it solves an equation of the form $\phi^{m+n}(X)= \phi^m(X)$ and therefore it is algebraic over $\F_q$, hence $x \in \F_q$.
\end{rem}

Now the general idea is to find larger subsets of $\Per(\phi, K)$ for which both conditions of Proposition \ref{Ppol} hold. In Lemmas \ref{Lp1} and \ref{Lp2}, $\phi$ is a polynomial in $\mathcal O[X]$ of degree $d \geq 2$ with leading coefficient in $\mathcal O^*$.

\begin{lemma}\label{Lp1}
	Assume that $\phi$ has at least one fixed point, then for every nontrivial cycle $\mathcal{C}$ the set $\mathcal{P}=\Fix(\phi, K) \cup \mathcal{C}$ satisfies both hypotheses of Proposition \ref{Ppol}.
\end{lemma}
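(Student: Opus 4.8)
The goal is to show that, given a polynomial $\phi \in \mathcal O[X]$ of degree $d \geq 2$ with invertible leading coefficient, admitting at least one fixed point, and given any nontrivial cycle $\mathcal C$, the set $\mathcal P = \Fix(\phi, K) \cup \mathcal C$ satisfies hypotheses (a) and (b) of Proposition \ref{Ppol}. Hypothesis (b) (with $\mathcal P' = \mathcal P$) is almost immediate: $\phi$ fixes every point of $\Fix(\phi, K)$ and permutes the points of $\mathcal C$, so $\phi(\mathcal P) \subseteq \mathcal P$. The substance is hypothesis (a), namely that all differences $y - x$ between distinct elements of $\mathcal P$ agree up to multiplication by a unit in $\mathcal O^* = \F_q^*$. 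By the remark preceding the lemma (the equivalence of (a) and (a')), it suffices to fix one convenient base point and check that all differences from that base point lie in a single $\sim$-class.

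First I would recall the setup already established in the proofs of Theorems \ref{Tpol} and \ref{Tpol3}: all periodic points of $\phi$ lie in $\mathcal O$ (a periodic point is a root of a monic polynomial over $\mathcal O$, and $\mathcal O$ is integrally closed); in particular both $\Fix(\phi, K) \subseteq \mathcal O$ and $\mathcal C \subseteq \mathcal O$. The inclusion of ideals $(y - x)\mathcal O \supseteq (\phi(y) - \phi(x))\mathcal O$ for distinct $x, y \in \mathcal O$ is the basic tool. Applying this along the cycle $\mathcal C = \{x, \phi(x), \dots, \phi^{n-1}(x)\}$ gives, exactly as in the proof of Theorem \ref{Tpol}, a cyclic chain of ideal inclusions that must all be equalities, so that $\phi^i(x) - \phi^{i-1}(x) \sim \phi(x) - x$ for every $i$, and summing telescopically $\phi^i(x) - x \sim \phi(x) - x$ as well; thus all differences \emph{within} $\mathcal C$ lie in the class of $\phi(x) - x$. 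Likewise, for a fixed point $\alpha$ and the cycle point $x$, the chain
\[
(x - \alpha)\mathcal O \supseteq (\phi(x) - \alpha)\mathcal O \supseteq \cdots \supseteq (\phi^{n-1}(x) - \alpha)\mathcal O \supseteq (\phi^n(x) - \alpha)\mathcal O = (x - \alpha)\mathcal O
\]
(using $\phi(\alpha) = \alpha$) forces all these ideals equal, so $\phi^i(x) - \alpha \sim x - \alpha$ for all $i$. Combining with property (2) of the $\sim$-relation listed in the excerpt ($y - x \sim z - x \Rightarrow y - x \sim y - z$, applied with $x = \alpha$), we get that $\phi^i(x) - \phi^j(x) \sim x - \alpha$ for distinct cycle points; but we already know these differences are $\sim \phi(x) - x$, so in fact $x - \alpha \sim \phi(x) - x$. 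This pins the fixed-point-to-cycle differences into the \emph{same} class as the in-cycle differences.

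The remaining case is a pair of distinct fixed points $\alpha, \beta$. Here I would run the chain of ideal inclusions for the pair $(\alpha, \beta)$ under iteration: since $\phi(\alpha) = \alpha$ and $\phi(\beta) = \beta$, every term equals $(\alpha - \beta)\mathcal O$ trivially, which gives no information directly. Instead I would compare through a cycle point: from $x - \alpha \sim x - \beta$ — which follows because both are $\sim \phi(x)-x$ by the previous paragraph — and property (2) of $\sim$ (now with the roles arranged so that the common point is $x$), one deduces $\alpha - \beta \sim x - \alpha \sim \phi(x) - x$. Hence every difference between distinct elements of $\mathcal P$ — whether both in $\mathcal C$, one in each, or both in $\Fix(\phi, K)$ — is equivalent to the single element $\phi(x) - x$, which is condition (a). I expect the only mildly delicate point to be the bookkeeping of property (2) of $\sim$: one must check each time that the three points involved are genuinely distinct and that the common vertex is chosen correctly, since the implication is asymmetric in its three arguments. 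Everything else is a direct reuse of the ideal-chain argument already in hand, so no new ideas are required.
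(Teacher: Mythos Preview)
Your proposal is correct and follows essentially the same route as the paper: both verify $\phi$-invariance trivially, use the cyclic chain of ideal inclusions $(\alpha - x)\mathcal O \supseteq (\alpha - \phi(x))\mathcal O \supseteq \cdots$ to pin fixed-point-to-cycle differences into a single $\sim$-class, and then handle two fixed points by comparing through a cycle point via property~(2) of $\sim$. The only cosmetic difference is that the paper checks condition~(a') with a fixed point as base (so the in-cycle case need not be treated separately), whereas you anchor everything to $\phi(x)-x$; your derivation of $x-\alpha \sim \phi(x)-x$ via $\phi^i(x)-\phi^j(x)$ is slightly roundabout compared to the paper's direct application of property~(2) to $\alpha - x \sim \alpha - \phi(x)$, but equally valid.
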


\begin{proof}
	Since $\mathcal{P}$ is clearly a $\phi$-invariant set, we only need to show that condition (a') holds. Let $y$ be a fixed point for $\phi$ and take $x \in \mathcal{C}$ of exact period $n>1$, then as in the proof of Theorem \ref{Tpol} 
	\begin{equation*}
	(y-x) \mathcal{O} \supseteq (y - \phi(x)) \mathcal{O} \supseteq \dots \supseteq (y - \phi^{n-1}(x))\mathcal{O} \supseteq (y-x) \mathcal{O}.
	\end{equation*}
	Then $y -x \sim y - \phi^i(x)$ for every $i$, which means that the thesis is true for the subset $\{y\} \cup \mathcal{C}$. In particular $y- x \sim \phi(x) - x$ and this holds for every fixed point. Now let $y' \neq y$ be another fixed point for $\phi$ (if there exists one), then $y - x \sim y' - x$ because both are equivalent to $\phi(x)- x$. This in turn implies $y - x \sim y - y'$.
\end{proof}
		
\begin{lemma}\label{Lp2}
	If the set of periodic points contains at least two fixed points and a cycle, then it satisfies both hypotheses of Proposition \ref{Ppol}.
\end{lemma}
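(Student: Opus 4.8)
The plan is to apply Proposition \ref{Ppol} with $\mathcal P = \mathcal P' = \Per(\phi, K)$. Since $\Per(\phi, K)$ is $\phi$-invariant, hypothesis (b) is automatic, and by Theorem \ref{Tpol} (using $\deg \phi \geq 2$) every periodic point lies in $\mathcal O$, so the divisibility $(a - b)\mathcal O \supseteq (\phi(a) - \phi(b))\mathcal O$ that drives the proof of Lemma \ref{Lp1} is available for all periodic points. Thus everything reduces to checking hypothesis (a), and by the equivalence of (a) and (a') recorded above it suffices to fix one fixed point $y$ and show that the differences $y - x$, as $x$ ranges over $\Per(\phi, K) \setminus \{y\}$, fall into a single $\sim$-class. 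I would take $y - y'$ as the reference element, where $y'$ is a second fixed point: this is precisely where the hypothesis of having (at least) two fixed points is used.

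The core step is the claim that for every point $z$ lying on a nontrivial cycle one has $y - z \sim y - y'$. To prove it I would rerun the ideal-chain argument of Lemma \ref{Lp1}, applied to the fixed point $y$ and the cycle of $z$, obtaining $y - z \sim y - \phi(z)$; writing $\phi(z) - z = (y - z) - (y - \phi(z))$ and noting $\phi(z) \neq z$ on a nontrivial cycle gives $\phi(z) - z \sim y - z$, and the same computation with $y'$ in place of $y$ gives $\phi(z) - z \sim y' - z$, hence $y - z \sim y' - z$. Since $z$, $y$, $y'$ are pairwise distinct ($z$ is not fixed), property (2) of $\sim$ converts this into $y - z \sim y - y'$. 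Conceptually, $\phi(z) - z$ a priori depends on which cycle contains $z$, whereas $y - y'$ does not; the second fixed point is exactly the device that anchors the reference difference uniformly across all cycles, and this is the only genuinely substantive point in the argument.

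Granting the claim, the rest is bookkeeping over $x \in \Per(\phi, K) \setminus \{y\}$: if $x = y'$ there is nothing to do; if $x$ is a third fixed point I pick any $z$ on a nontrivial cycle and chain $y - x \sim y - z \sim y - y'$, the first equivalence being the claim with $x$ in the role of $y'$ and the second the claim itself; and if $x$ lies on a nontrivial cycle the claim applies directly. Hence $y - x \sim y - y'$ for every such $x$, condition (a') holds for $\Per(\phi, K)$, and Proposition \ref{Ppol} applies. I do not anticipate a real obstacle; the only points requiring care are reading ``a cycle'' in the statement as a nontrivial one (consistently with Lemma \ref{Lp1}), so that the cycle point $z$ is automatically distinct from both fixed points and property (2) of $\sim$ may legitimately be invoked, and keeping track that all the elements fed to that property are genuinely distinct.
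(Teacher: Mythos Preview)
Your proposal is correct and follows essentially the same approach as the paper. The only difference is cosmetic: the paper simply cites the last paragraph of the proof of Lemma~\ref{Lp1} (where $y-y'\sim y-x$ for any two fixed points $y,y'$ and any cycle point $x$ is already established) and notes that this single relation covers every case of (a'), whereas you re-derive that relation explicitly and spell out the three cases; the underlying argument is identical.
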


\begin{proof}
To begin with, $\Per(\phi, K)$ is obviously $\phi$-invariant. As seen in the proof of Lemma \ref{Lp1} the distance between any two fixed points $y \neq y'$ satisfies $y - y' \sim y - x$ for every $x$ that belongs to a cycle. This simple remark includes all possible cases for $y - x$ with $x \neq y$ varying over the set of periodic points.
\end{proof}

\begin{theo}\label{Tpol2}
    Let $\phi \in \mathcal{O}[X]$ be a polynomial of degree $d \geqslant 2$ with leading coefficient in $\mathcal{O}^*$. Then $\# \Per(\phi, K) \leqslant \min \{d,q\}$ unless one of the following is satisfied:
	\begin{enumerate}[label=\textnormal{(\alph{enumi})}]
	\item $\phi$ is conjugated to a polynomial over $\F_q$, then $\# \PrePer(\phi, K)= q$;
	\item $\Per(\phi,K)=\Fix(\phi,K)$, then $\# \Per(\phi, K) \leqslant d$.
	\item $\Per(\phi, K)$ is the union of at least two $n$-cycles (with $n > 1$) and at most one fixed point.
	\end{enumerate}
\end{theo}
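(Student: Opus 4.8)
The plan is to funnel everything into a single application of Proposition~\ref{Ppol} to the $\phi$-invariant set $\mathcal P=\mathcal P'=\Per(\phi,K)$. For this choice condition~(b) of that proposition is automatic, so the only thing at issue is condition~(a): that all differences of distinct periodic points agree up to a factor in $\F_q^*$. Once (a) is established, Proposition~\ref{Ppol}(1) gives $\#\Per(\phi,K)\le q$, and part~(2) gives $\#\Per(\phi,K)\le d$ \emph{unless} $\phi$ is conjugate over $K$ to a polynomial with coefficients in $\F_q$; in that exceptional case Remark~\ref{Rcycle} tells us (after a change of coordinates) that $\PrePer(\phi,K)=\F_q$, hence $\#\PrePer(\phi,K)=q$, which is exactly case~(a) of the statement. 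So the whole theorem reduces to the dichotomy: if $\phi$ is not conjugate over $K$ to a polynomial over $\F_q$, then either $\Per(\phi,K)$ satisfies condition~(a) of Proposition~\ref{Ppol} (giving the bound $\min\{d,q\}$), or $\Per(\phi,K)=\Fix(\phi,K)$, or we are in case~(c).

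To prove the dichotomy I would first dispose of $\Per(\phi,K)=\Fix(\phi,K)$: here the periodic points are the roots of $\phi(X)-X$, a nonzero polynomial of degree $d$ (its leading term $aX^{d}$ is not cancelled by $-X$ since $d\ge2$), so $\#\Per(\phi,K)=\#\Fix(\phi,K)\le d$ and we land in case~(b). Assume then that $\phi$ has at least one cycle of length $>1$, and split according to the fixed points and cycle lengths. If $\#\Fix(\phi,K)\ge2$, Lemma~\ref{Lp2} gives condition~(a) for $\Per(\phi,K)$ at once. If $\#\Fix(\phi,K)\le1$ and there is exactly one nontrivial cycle $\mathcal C$, then $\Per(\phi,K)$ is either $\mathcal C$ itself, which satisfies~(a) by~\eqref{eqpol}, or $\{y_0\}\cup\mathcal C$ with $y_0$ the unique fixed point, which satisfies~(a) by Lemma~\ref{Lp1}. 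The only configuration left is $\#\Fix(\phi,K)\le1$ together with at least two nontrivial cycles; if they all share a common length $n>1$ we are precisely in case~(c), so it remains to treat the case where two of them have distinct lengths.

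The heart of the argument — and the step I expect to be the main obstacle — is to handle that last case by passing to an iterate. Pick nontrivial cycles $\mathcal C_1,\mathcal C_2$ of lengths $m<n$. Then $\phi^{m}$ still lies in $\mathcal O[X]$, has degree $d^{m}\ge2$, and its leading coefficient $a^{1+d+\dots+d^{m-1}}$ lies in $\mathcal O^{*}$, so Lemma~\ref{Lp2} applies to it. The cycle $\mathcal C_1$ is fixed pointwise by $\phi^{m}$, contributing at least $m\ge2$ fixed points, while $\mathcal C_2$ breaks up under $\phi^{m}$ into cycles of length $n/\gcd(m,n)>1$ (using $m<n$); hence $\phi^{m}$ has at least two fixed points and a nontrivial cycle. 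By Lemma~\ref{Lp2}, $\Per(\phi^{m},K)$ satisfies condition~(a) of Proposition~\ref{Ppol}, and since $\Per(\phi,K)\subseteq\Per(\phi^{m},K)$ and condition~(a) is inherited by subsets (it only constrains differences of distinct elements), $\Per(\phi,K)$ satisfies~(a) as well. Collecting the cases and feeding the outcome into Proposition~\ref{Ppol} finishes the proof. The points that need care are the persistence of the ``unit leading coefficient'' hypothesis under iteration, the verification that~(a) genuinely passes to subsets, and the bookkeeping confirming that the sole configuration escaping the argument is exactly~(c): at most one fixed point together with at least two cycles, necessarily all of one common length $n>1$.
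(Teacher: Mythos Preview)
Your proposal is correct and follows essentially the same route as the paper: case analysis on the number of fixed points and nontrivial cycles, with Lemmas~\ref{Lp1} and~\ref{Lp2} feeding condition~(a) into Proposition~\ref{Ppol}, and the residual case of two nontrivial cycles of distinct lengths $m<n$ handled by passing to $\phi^{m}$ so that Lemma~\ref{Lp2} applies. The only cosmetic differences are that the paper uses the equality $\Per(\phi,K)=\Per(\phi^{m},K)$ rather than your inclusion-plus-restriction, and does not spell out (as you do) that the leading coefficient of $\phi^{m}$ remains a unit.
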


\begin{proof}
	The previous results immediately yield the following cases:
	\begin{enumerate}
		\item if every periodic point is a fixed point, then $\# \Per(\phi,K) \leqslant d$;
		\item if the set of periodic points consists of a unique $n$-cycle, then $\Per(\phi, K)$ satisfies the hypotheses of Proposition \ref{Ppol};
		\item if the set of periodic points is the union of one fixed point and one $n$-cycle, then by Lemma \ref{Lp1} we may apply Proposition \ref{Ppol} to $\mathcal{P}= \Per(\phi, K)$;
		\item if $\Per(\phi, K)$ contains at least two fixed points and a cycle, then again the hypotheses of Proposition \ref{Ppol} hold for  $\mathcal{P}= \Per(\phi, K)$ by Lemma \ref{Lp2}.	
	\end{enumerate}
We show that the remaining cases can somehow be reduced to (iv). Indeed, suppose that the set $\mathcal{P}=\Per(\phi, K)$ contains cycles of at least two distinct lengths $1 <m<n$. Let $\psi= \phi^{m}$, then of course a point of $K$ is periodic for $\phi$ if and only if it is periodic for $\psi$, i.e. $\mathcal{P}= \Per(\psi, K)$. Moreover the set $\mathcal{P}$ contains at least $m$ fixed points for $\psi$: the points of minimal period $m$ for $\phi$. Then again we can apply Lemma \ref{Lp2} and $\mathcal{P}$ satisfies the conditions in Proposition \ref{Ppol} (for both $\psi$ and $\phi$). Hence $\mathcal{P}$ has cardinality at most $d$ unless $\phi$ is conjugated to some polynomial over $\F_q$. In the latter case, after a change of coordinates the set of preperiodic points is $\F_q$ (see Remark \ref{Rcycle}).
\end{proof}

By Theorem \ref{Tpol2} the set of periodic points has cardinality at most $q$, unless it coincides with $\Fix(\phi, K)$ or it is the union of a certain number of cycles of the same length and at most one fixed point. We provide counterexamples for these two cases, taking $K = \F_q(t)$ and $\infty=\frac{1}{t}$, so that the ring of integers is $\F_q[t]$.

\begin{example}\label{ExDepOnd}
   	Take distinct elements $f_1, \dotsc , f_d \in \F_q[t]$ and define
	\begin{equation*}
	\psi_d(X) = \prod_{i=1}^{d}(X - f_i) + X.
	\end{equation*}
	Then $f_1, \dotsc f_d$ are fixed points for $\psi_d$. Moreover, by Theorem \ref{Tpol2}, $\Per(\psi_d, K) = \Fix(\psi_d, K)$ for every $d>q$.
\end{example}
	
\begin{example}\label{ExDepOnd'}
    We refine the previous example in order to obtain a polynomial with a fixed point and several $n$-cycles.
	Take $w \in \F_q^*$ of multiplicative order $n>1$ and $f_1, \dotsc, f_m$ nonzero elements of $\F_q[t]$ such that their $n$-th powers $f_1^n, \dotsc, f_m^n$ are distinct. Then for every $i= 1, \dotsc, m$ the polynomial $X^n - f_i^n$ is separable with roots $f_i, wf_i, \dotsc, w^{n-1} f_i$. Now define the polynomial of degree $d = mn + 1$
	\begin{equation*}
	\phi_m(X)= X \prod_{i=1}^{m} (X^n - f_i^n) + wX.
	\end{equation*}
	Then $0$ is fixed and the set of periodic points contains at least $m$ distinct cycles of length $n$: for $i= 1, \dotsc, m$
	\begin{equation*}
	f_i \mapsto wf_i \mapsto \cdots \mapsto w^{n-1} f_i \mapsto f_i\, ,
	\end{equation*}
    so $\Per(\phi_m, K)$ has cardinality at least $d$. Of course $m$ (and hence $d$) can be arbitrarily large.
	Also note that, by Theorem \ref{Tpol2}, the polynomial $\phi_m$ has as at most one fixed point in $K$ for large $m$, i.e. $0$ is the unique root of $\phi_m(X) - X$ in $\F_q(t)$.

\end{example}

\begin{rem}\label{Rdbound}In case (c) of Theorem \ref{Tpol2} we can still prove (as a consequence of Proposition \ref{Ppol} and Lemma \ref{Lp1}):
	\begin{enumerate}
		\item if $\Per(\phi, K)$ is the union of cycles of length $n$, then either $n \leqslant d$ or $\phi$ is conjugated to a polynomial over $\F_q$;
		\item if $\Per(\phi, K)$ is the union of exactly one fixed point and some cycles of length $n$, then either $n+1 \leqslant d$ or $\phi$ is conjugated to a polynomial over $\F_q$.
		\end{enumerate}
	With the same assumptions one also finds $n \leqslant q$ in case (i) and $n+1 \leqslant q $ in case (ii).
\end{rem}

\begin{example}
We are now able to characterize all possible sets of periodic points for polynomials of degree two that are not defined over the full constant field. The length of a cycle is at most two by Remark \ref{Rcycle}, moreover $2$-cycles and fixed points cannot coexist by Remark \ref{Rdbound}. Therefore $\Per(\phi, K)$ can either be empty, consist of at most two fixed points, or be the union of $2$-cycles. Periodic points of period two are roots of $\phi^2(X) - X$ (at most four) and at least one of them is a fixed point (possibly contained in some proper extension of $K$). Then $\Per(\phi, K)$ contains at most one $2$-cycle. Examples \ref{ExDepOnd} and \ref{ExDepOnd'} show that there exist polynomials of degree two with one or two fixed points or a unique $2$-cycle. 
\end{example}


\section{Notation and background}\label{S3}

The aim of this section is to provide the reader with the basics about reduction of rational maps modulo a prime and good reduction. We will then define a metric induced by the $\p$-adic valuation over $\P^1(K)$ and recall some of its properties which are relevant when studying dynamical questions. A more detailed discussion of these topics can be found in \cite[Chapter 2]{Sil07}. We conclude this section with a result (Lemma \ref{L1new}) that is crucial for the proof of our bounds for rational functions.

In this section and at the beginning of Section \ref{Sfinite} we will be using the following notation:
\[\begin{array}{cl}
K &\text{a valued field with discrete valuation $v=v_{\p}\colon K^* \rightarrow \Z$;}\\
R &=\{\alpha \in K : v(\alpha) \geqslant 0 \}\text{ the discrete valuation ring attached to $v$;}\\
R^* &=\{ \alpha \in K : v(\alpha)=0\}\text{ the group of units of $R$;}\\
\p &=\{ \alpha \in K : v(\alpha) \geqslant 1\}\text{ the maximal ideal of $R$;}\\
k(\p) &=R/\p\text{ the residue field of $R$;}\\
\widetilde{\;} &\text{reduction modulo $\p$, i.e. $R \rightarrow k(\p)$, $\alpha \mapsto \tilde{\alpha}$;}\\
\phi & \text{a rational map } \mathbb{P}^1(K)\rightarrow \mathbb{P}^1(K) \text{ defined over } K.\\
\end{array}\]
We remark that most results in \cite[Chapter 2]{Sil07} are stated for local fields but the same proofs work for our valued field/discrete valuation ring setting as well.


\subsection{Rational maps with good reduction}\label{subs:goodred}
We begin with some definitions.

\begin{deff}
	A point $ P = [x : y] \in \P^1(K)$ in homogeneous coordinates is in \textbf{normalized form} with respect to $\p$ if 
	\[\min\{v_{\p}(x), v_{\p}(y)\} = 0,\]
	i.e. at least one of $x, y$ belongs to $R^*$.
    If $ P=[x: y] \in \P^1(K)$ is normalized with respect to $\p$, the \textbf{reduction of $P$ modulo $\p$} is the point $\widetilde{P}= [\tilde{x}: \tilde{y}] \in \P^1(k(\p))$.
\end{deff}

\begin{deff} Write a rational map $\phi$ as
	\[\phi([X:Y])=[F(X,Y): G(X,Y)]\]
	with $F, G$ homogeneous polynomials in $K[X, Y]$. We say that the pair $[F:G]$ is in \textbf{normalized form} with respect to $\p$ if $F$ and $G$ are polynomials in $R[X,Y]$ and at least one of them does not belong to $\p R[X,Y]$. If $\phi=[F:G]$ is in normalized form, the \textbf{reduction of $\phi$ modulo $\p$}, denoted by $\widetilde{\phi}$, is defined reducing all the coefficients of $F$ and $G$ modulo $\p$. 
\end{deff}

Every point can be written in normalized form with respect to $\p$, though not in a unique way: indeed, if $[x : y]$ is normalized, so is $[ux : uy]$ for every $u \in R^*$. Anyway the reduction modulo $\p$ does not depend on the choice of coordinates by \cite[Proposition 2.7]{Sil07} and the same applies to rational maps. In general however the reduction of maps does not behave nicely: for instance it may not be true that $\widetilde{\phi}(\widetilde{P})= \widetilde{\phi(P)}$, nor that $\widetilde{\phi} \circ \widetilde{\psi}= \widetilde{\phi \circ \psi}.$ The following fact is responsible for this: even though $\phi=[F:G]$ is given by a pair of relatively prime homogeneous polynomials, the reduced polynomials may acquire some non-trivial common roots in an algebraic closure of the residue field $k(\p)$. The notion of good reduction was introduced to rule out this kind of behavior.

\begin{deff} Let $\phi=[F:G]$ be written in normalized form with respect to $\p$. The \textbf{resultant} $\Res(\phi)$ is defined as the homogeneous resultant $\Res(F,G)$ (see \cite[Section 2.4]{Sil07}).
\end{deff}

\begin{rem}
	Since the pair $[F:G]$ is unique up to multiplication of both terms by a unit $u \in R^*$, the quantity $v_{\p}(\Res(\phi))$ depends only on the map $\phi$.
\end{rem}

\begin{deff}\label{def_goodred} Let $\phi=[F:G]$ 
be in normalized form with respect to $\p$. We say that $\phi$ has \textbf{good reduction at $\p$} if it satisfies one of the following equivalent conditions:
	\begin{enumerate}[noitemsep, label=\textnormal{(\alph{enumi})}]
		\item $\deg(\phi)=\deg(\widetilde{\phi})$;
		\item the equation $\widetilde{F}(X,Y)=\widetilde{G}(X,Y)=0$ has no solutions in $ \P^1(\overline{k(\p)})$;
		\item $\Res(\phi) \in R^*$;
		\item $\Res(\widetilde{F}, \widetilde{G}) \neq 0$.
	\end{enumerate}
Otherwise, we say that $\phi$ has \textbf{bad reduction at $\p$}.
\end{deff}

The equivalence of the conditions in Definition \ref{def_goodred} is a consequence of well known properties of the resultant, see \cite[Theorem 2.15]{Sil07}. If $\phi$ and $\psi$ have good reduction at $\p$, then the expected equalities $\widetilde{\phi}(\widetilde{P})= \widetilde{\phi(P)}$ and $\widetilde{\phi} \circ \widetilde{\psi}= \widetilde{\phi \circ \psi}$ hold. Moreover the composition $\phi \circ \psi$ has good reduction as well (see \cite[Theorem 2.18]{Sil07}). Therefore a periodic point $P$ reduces to a periodic point for $\widetilde{\phi}$, whose exact period divides that of $P$.

\begin{rem}\label{RGRed1}
		Let $\phi$ be a rational map defined over $K$ with good reduction at $\p$. Write $\phi=[F:G]$ and $P=[x: y]$ both in normalized form with respect to $\p$, then $\phi(P)= [F(x,y):G(x,y)]$ is again normalized with respect to $\p$. Indeed, by Definition \ref{def_goodred} (b) at least one of $F(x,y)$ and $G(x,y)$ is in $R^*$.	
\end{rem}

\begin{rem}\label{RGRed2} Consider the M\"obius transformation 
	\[\mu([X:Y])=[aX + b Y: c X + d Y] \] 
corresponding to the matrix 
\[\left[ \begin{matrix} a & b \\ c& d  \end{matrix} \right]\in\PGL_2(R).\] 
The resultant of $\mu$ is $ad - bc \in R^*$ and therefore $\mu$ has good reduction at $\p$. 
\end{rem}


\smallskip
\subsection{The $\p$-adic logarithmic distance}\label{subs:distance}
When working with rational maps defined over the projective line, the difference between two points will make no sense. In order to generalize the results of Section \ref{Spol} we will consider distances induced by the primes of good reduction.
\begin{deff}Let $P_1=[x_1:y_1]$ and $P_2=[x_2: y_2]$ be points in $\P^1(K)$. The \textbf{$\p$-adic logarithmic distance} on $\P^1(K)$ is defined as
	\begin{equation*} \delta_{\p}(P_1, P_2)= v_{\p}(x_1y_2 - x_2 y_1) - \min\{v_{\p}(x_1), v_{\p}(y_1)\} - \min \{v_{\p}(x_2), v_{\p}(y_2)\}.
	\end{equation*}
\end{deff}

\begin{rem}\label{Rdistance} The above definition is independent of the choice of homogeneous coordinates for the points. Furthermore, if $P_1$ and $P_2$ are in normalized form with respect to $\p$, their distance is simply
	\begin{equation}\label{eqd}
	 \delta_{\p}(P_1, P_2)=v_{\p}(x_1y_2 - x_2 y_1).
	\end{equation}
Notice that $\delta_{\p}(P_1, P_2) $ is always non negative; furthermore the distance between two points is strictly positive if and only if they reduce to the same point in $\P^1(k(\p))$.
\end{rem}

Two properties of the logarithmic $\p$-adic distance prove to be particularly helpful for the study of dynamical questions.

\begin{prop}\textup{\cite[Proposition 5.1]{MS95}}\label{P1}
For all $P, P', P'' \in \P^1(K)$
\begin{equation}\label{triang}
	\delta_{\p}(P,P') \geqslant \min\{\delta_{\p}(P, P''), \delta_{\p}(P'',P')\}\,.
\end{equation}
\end{prop}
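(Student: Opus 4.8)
The plan is to reduce to the case where all three points $P=[x_1:y_1]$, $P'=[x_2:y_2]$ and $P''=[x_3:y_3]$ are in normalized form with respect to $\p$; this is harmless since $\delta_{\p}$ does not depend on the choice of homogeneous coordinates (Remark \ref{Rdistance}). Writing $[ij]=x_iy_j-x_jy_i$, formula \eqref{eqd} then gives $\delta_{\p}(P,P')=v_{\p}([12])$, $\delta_{\p}(P,P'')=v_{\p}([13])$ and $\delta_{\p}(P'',P')=v_{\p}([23])$, so the assertion becomes the purely algebraic inequality $v_{\p}([12])\geq\min\{v_{\p}([13]),v_{\p}([23])\}$.

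The key step is the pair of Pl\"ucker-type identities
\begin{equation*}
x_3\,[12]=x_2\,[13]-x_1\,[23],\qquad y_3\,[12]=y_2\,[13]-y_1\,[23],
\end{equation*}
each of which is checked by expanding the right-hand side (they simply record the linear dependence of the three vectors $(x_i,y_i)$ in the plane, equivalently the vanishing of a $3\times3$ determinant with a repeated column). Since $P''$ is normalized, at least one of $x_3,y_3$ lies in $R^*$; say $v_{\p}(x_3)=0$, the case $y_3\in R^*$ being identical using the second identity. Dividing the first identity by $x_3$ gives $[12]=x_3^{-1}(x_2\,[13]-x_1\,[23])$.

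Now because $P$ and $P'$ are normalized we have $x_1,x_2\in R$, i.e. $v_{\p}(x_1)\geq0$ and $v_{\p}(x_2)\geq0$, so the ultrametric inequality for $v_{\p}$ yields
\begin{equation*}
v_{\p}([12])=v_{\p}\bigl(x_2\,[13]-x_1\,[23]\bigr)\geq\min\{v_{\p}([13]),\,v_{\p}([23])\},
\end{equation*}
which is precisely \eqref{triang}. The one thing to handle with care is the bookkeeping of the normalization hypotheses: it is normalization of the middle point $P''$ that makes one of $x_3,y_3$ invertible and allows the division, while normalization of $P$ and $P'$ ensures the coefficients $x_1,x_2$ (resp. $y_1,y_2$) contribute no negative valuation. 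Beyond setting up the algebraic identity above, no real analysis is involved.
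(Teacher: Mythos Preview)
Your proof is correct. Note, however, that the paper does not actually prove this proposition: it is simply quoted from \cite[Proposition 5.1]{MS95} without argument. Your argument via the Pl\"ucker identity $x_3[12]=x_2[13]-x_1[23]$ (and its $y$-counterpart) combined with the ultrametric inequality is precisely the standard proof found in the cited source, so there is nothing further to compare.
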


\begin{prop}\textup{\cite[Proposition 5.2]{MS95}}\label{P2}
Assume that $\phi$ has good reduction at $\p$, then 
	\begin{equation*}
	\delta_{\p}(\phi(P), \phi(P')) \geqslant \delta_{\p} (P,P')
	\end{equation*}
	for every $P, P' \in \P^1(K)$.
\end{prop}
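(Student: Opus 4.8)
The plan is to pass to normalized coordinates, turn the claim into an inequality between $\p$-adic valuations, and then exploit a divisibility that holds for an \emph{arbitrary} homogeneous pair $[F:G]$; good reduction will enter only at the very last comparison.

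First I would write $P = [x_1:y_1]$ and $P' = [x_2:y_2]$ in normalized form with respect to $\p$, so that $x_i, y_i \in R$ and, by \eqref{eqd}, $\delta_\p(P,P') = v_\p(x_1 y_2 - x_2 y_1)$. Writing also $\phi = [F:G]$ in normalized form, with $F, G$ homogeneous of degree $d = \deg\phi$, Remark \ref{RGRed1} tells me that good reduction forces the representatives $\phi(P) = [F(x_1,y_1):G(x_1,y_1)]$ and $\phi(P') = [F(x_2,y_2):G(x_2,y_2)]$ to be \emph{already} normalized. Hence \eqref{eqd} applies to them too and
\[\delta_\p(\phi(P),\phi(P')) = v_\p\bigl(F(x_1,y_1)G(x_2,y_2) - F(x_2,y_2)G(x_1,y_1)\bigr),\]
so the proposition reduces to the inequality $v_\p\bigl(F(x_1,y_1)G(x_2,y_2) - F(x_2,y_2)G(x_1,y_1)\bigr) \geq v_\p(x_1y_2 - x_2y_1)$.

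The key step will be the polynomial identity
\[F(X,Y)G(X',Y') - F(X',Y')G(X,Y) = (XY' - X'Y)\,\Phi(X,Y,X',Y'),\]
where $\Phi$ has coefficients in $R$. To prove it I would first treat the coefficients of $F$ and $G$ as indeterminates over $\Z$, so that the left-hand side $H$ lives in a polynomial ring over a UFD. Substituting $X \mapsto X'Y/Y'$ and using the homogeneity of $F$ and $G$ makes $H$ vanish, so $XY' - X'Y$ divides $H$ in the corresponding fraction field; since $XY' - X'Y$ is primitive as a polynomial in $X$, Gauss's lemma upgrades this to divisibility over the UFD itself, with a quotient $\Phi$ that is polynomial over $\Z$ in the coefficients of $F$ and $G$. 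Specializing the indeterminates back to the actual coefficients, which lie in $R$, gives $\Phi \in R[X,Y,X',Y']$.

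Finally I would evaluate this identity at $(x_1,y_1,x_2,y_2) \in R^4$, obtaining
\[F(x_1,y_1)G(x_2,y_2) - F(x_2,y_2)G(x_1,y_1) = (x_1y_2 - x_2y_1)\,\Phi(x_1,y_1,x_2,y_2),\]
and since $\Phi(x_1,y_1,x_2,y_2) \in R$ has non-negative valuation, taking $v_\p$ of both sides yields exactly the inequality needed. I expect the only real obstacle to be the integrality of $\Phi$, which is why I single out the Gauss's lemma argument; everything else is bookkeeping. It is worth noting that without good reduction the distance formula for $\phi(P), \phi(P')$ would carry the extra subtracted terms $\min\{v_\p F(x_i,y_i), v_\p G(x_i,y_i)\}$, which may be strictly positive, and the inequality can then fail — so good reduction is used precisely, and only, to keep $\phi(P)$ and $\phi(P')$ in normalized form.
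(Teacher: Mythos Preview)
Your argument is correct. The paper does not supply its own proof here---it simply cites \cite[Proposition~5.2]{MS95}---and your line of reasoning (normalize coordinates, establish the generic divisibility $(XY'-X'Y)\mid F(X,Y)G(X',Y')-F(X',Y')G(X,Y)$ via Gauss's lemma, then invoke Remark~\ref{RGRed1} so that good reduction keeps the images normalized) is exactly the standard proof given in that reference.
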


Proposition \ref{P2} says that a rational map with good reduction at $\p$ is everywhere non-contracting with respect to $\delta_{\p}$. In particular, when restricted to the orbit of a periodic point, the map $\phi$ preserves distances. 

\begin{prop}\textup{\cite[Proposition 6.1]{MS95}}\label{P3}
    Assume that $\phi$ has good reduction at $\p$ and let $P \in \P^1(K)$ be a periodic point with exact period $n$. Then
	\begin{enumerate}[noitemsep]
		\item $\delta_{\p}(\phi^i(P), \phi^j(P))= \delta_{\p}(\phi^{i+k}(P),\phi^{j+k}(P))$ for every $i, j, k \in\Z$;
		\item $\delta_{\p}(\phi^i(P), \phi^j(P))= \delta_{\p}(P,\phi(P))$ for all $i, j \in\Z$ satisfying $\gcd(j-i, n)=1$.
	\end{enumerate}
\end{prop}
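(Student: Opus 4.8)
The plan is to deduce both statements from the two basic facts recalled just above: the non-expanding property of Proposition \ref{P2} and the ultrametric-type inequality of Proposition \ref{P1}, combined with the elementary observation that $\phi^{n}$ fixes every point of the (finite) orbit of $P$. Consequently $\phi^{a}(P)$ makes sense for every $a\in\Z$ once we read the exponent modulo $n$, and this reduction modulo $n$ is compatible with $\phi$. Throughout I will write $f(a)=\delta_{\p}(P,\phi^{a}(P))$.

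For part (i), I first reduce to $k\geq 0$: this is harmless, since replacing $k$ by $k+tn$ with $t$ large changes neither $\phi^{i+k}(P)$ nor $\phi^{j+k}(P)$. Applying Proposition \ref{P2} a total of $k$ times to the pair $\phi^{i}(P),\phi^{j}(P)$ gives $\delta_{\p}(\phi^{i+k}(P),\phi^{j+k}(P))\geq\delta_{\p}(\phi^{i}(P),\phi^{j}(P))$. Now choose $k'\geq 0$ with $k+k'\equiv 0\pmod n$ and apply Proposition \ref{P2} another $k'$ times to the pair $\phi^{i+k}(P),\phi^{j+k}(P)$; since $\phi^{k+k'}$ fixes every point of the orbit of $P$, this yields the reverse inequality $\delta_{\p}(\phi^{i}(P),\phi^{j}(P))\geq\delta_{\p}(\phi^{i+k}(P),\phi^{j+k}(P))$. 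Hence both are equal, proving (i). In particular $f(a)=\delta_{\p}(\phi^{b}(P),\phi^{a+b}(P))$ for every $b$, so $f$ depends only on $a\bmod n$.

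For part (ii), by (i) I may shift indices and assume $i=0$, $j=m$ with $1\leq m<n$ and $\gcd(m,n)=1$ (the case $n=1$ being trivial). The goal is $f(m)=f(1)$. For $f(m)\geq f(1)$, consider the chain $P,\phi(P),\dots,\phi^{m}(P)$: by (i) each consecutive distance equals $f(1)$, so iterating Proposition \ref{P1} along the chain gives $f(m)=\delta_{\p}(P,\phi^{m}(P))\geq f(1)$. For the reverse inequality I use $\gcd(m,n)=1$ to pick $s\geq 1$ with $sm\equiv 1\pmod n$, and consider the chain $P,\phi^{m}(P),\phi^{2m}(P),\dots,\phi^{sm}(P)=\phi(P)$; by (i) (with shift $k=-tm$ at the $t$-th step) each consecutive distance equals $f(m)$, so iterating Proposition \ref{P1} gives $f(1)=\delta_{\p}(P,\phi(P))\geq f(m)$. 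Combining the two inequalities yields $f(m)=f(1)$, which is the assertion.

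I do not expect a genuine obstacle here: the statement is really the assertion that $\phi$ acts as an isometry on the finite orbit of $P$ for the (ultrametric) distance $\delta_{\p}$. The only points requiring care are purely bookkeeping, namely the legitimacy of reducing exponents of $\phi$ modulo $n$, the claim that $\delta_{\p}(\phi^{tm}(P),\phi^{(t+1)m}(P))=f(m)$ for all $t$ (immediate from (i)), and the routine induction showing that Proposition \ref{P1} iterates to $\delta_{\p}(Q_0,Q_r)\geq\min_{0\leq t<r}\delta_{\p}(Q_t,Q_{t+1})$ along any chain $Q_0,\dots,Q_r$.
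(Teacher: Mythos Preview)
Your argument is correct. The paper does not supply its own proof of this proposition; it simply cites \cite[Proposition 6.1]{MS95}, and your proof is essentially the standard one given there: use Proposition \ref{P2} together with periodicity to get the isometry statement (i), then for (ii) run the ultrametric inequality along the chain $P,\phi(P),\dots,\phi^{m}(P)$ for one direction and along the chain $P,\phi^{m}(P),\dots,\phi^{sm}(P)=\phi(P)$ (with $sm\equiv 1\pmod n$) for the other.
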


The following property can be easily deduced from Proposition \ref{P3}.

\begin{lemma}\label{L0}
Assume that $\phi$ has good reduction at $\p$ and let $P \in \P^1(K)$ be a periodic point with exact period $n$. Then
	\[\delta_{\p}(\phi^i(P),\phi^j(P))= \delta_{\p}(P,\phi^d(P))\] 
	for all $i, j, d$ such that $\gcd(j-i,n)=d$. 
\end{lemma}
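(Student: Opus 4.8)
The plan is to deduce Lemma~\ref{L0} from Proposition~\ref{P3} by reducing the general pair $(i,j)$ to the two situations covered there: a coprime index difference, and a shift by a multiple of $n$. First I would observe that by part~(i) of Proposition~\ref{P3} the distance $\delta_{\p}(\phi^i(P),\phi^j(P))$ depends only on the residue of $j-i$ modulo $n$ (applying the shift by $k=-i$ reduces to $\delta_{\p}(P,\phi^{j-i}(P))$, and further shifts by multiples of $n$ do nothing since $\phi^n(P)=P$). So it suffices to prove that $\delta_{\p}(P,\phi^m(P))=\delta_{\p}(P,\phi^d(P))$ whenever $d=\gcd(m,n)$, where we may take $0<m<n$ and $d\mid n$, $d\mid m$.

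The key step is the following. Write $m=d\ell$ and $n=d n'$ with $\gcd(\ell,n')=1$. Consider the map $\psi=\phi^d$; then $P$ is periodic for $\psi$ with exact period $n'$, and $\psi$ has good reduction at $\p$ (composition of good-reduction maps has good reduction, by \cite[Theorem 2.18]{Sil07}). Now apply part~(ii) of Proposition~\ref{P3} to $\psi$ and $P$: since $\gcd(\ell-0,\,n')=\gcd(\ell,n')=1$, we get
\begin{equation*}
\delta_{\p}\bigl(\psi^{0}(P),\psi^{\ell}(P)\bigr)=\delta_{\p}\bigl(P,\psi(P)\bigr),
\end{equation*}
that is, $\delta_{\p}(P,\phi^{d\ell}(P))=\delta_{\p}(P,\phi^{d}(P))$, i.e. $\delta_{\p}(P,\phi^m(P))=\delta_{\p}(P,\phi^d(P))$. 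Combining with the first reduction step, for arbitrary $i,j$ with $\gcd(j-i,n)=d$ we set $m$ to be the residue of $j-i$ modulo $n$ (still having $\gcd(m,n)=d$) and conclude $\delta_{\p}(\phi^i(P),\phi^j(P))=\delta_{\p}(P,\phi^m(P))=\delta_{\p}(P,\phi^d(P))$.

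I do not expect a serious obstacle here; the only points requiring a little care are bookkeeping ones. One must make sure the reduction modulo $n$ is legitimate when $j-i$ is negative or zero (the $\gcd$ convention and part~(i) of Proposition~\ref{P3} handle this, since part~(i) is stated for all $i,j,k\in\Z$), and one must note that $\psi=\phi^d$ indeed has exact period $n'=n/d$ for $P$ — not a proper divisor — which holds because $n$ is the exact period of $P$ for $\phi$. With these checks in place the argument is immediate.
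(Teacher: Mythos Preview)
Your proof is correct and follows essentially the same approach as the paper: use Proposition~\ref{P3}(i) to reduce $\delta_{\p}(\phi^i(P),\phi^j(P))$ to $\delta_{\p}(P,\phi^{j-i}(P))$, then set $\psi=\phi^d$ and apply Proposition~\ref{P3}(ii) to $\psi$. You have simply spelled out the bookkeeping (good reduction of $\psi$, exact period $n/d$, handling of residues) that the paper leaves implicit.
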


\begin{proof}
First $\delta_{\p}(\phi^i(P), \phi^j(P)) = \delta_{\p}(P, \phi^{j-i}(P))$ by Proposition \ref{P3} (i). Now set $\psi= \phi^d$ and apply Proposition \ref{P3} (ii) to the map $\psi$.
\end{proof}

The bound for cycle lengths obtained in this paper is in part a consequence of the following result. Here it is stated in a slightly different (and somewhat more general) form than in \cite{CP16}. We also give an alternative proof because there seems to be a small flaw in the original one. 

\begin{lemma}\label{L1new} \textup{\cite[Lemma 3.2]{CP16}}
    Let $\mathcal P$ be a set of points in $\P^1(K) $ such that the quantity $\delta_{\p} (P, Q) $ with $(P, Q) $ varying over the pairs of distinct points in $\mathcal P$ is constant. Then 
	 \[ \# \mathcal P \leqslant \#k(\p) + 1 \]
	\end{lemma}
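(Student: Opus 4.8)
The plan is to reduce to the case where all points of $\mathcal P$ reduce to the same point of $\P^1(k(\p))$, and then to a normalized affine situation where the constant distance becomes a statement about valuations of pairwise differences. First I would dispose of the trivial cases: if $\#\mathcal P \leqslant 1$ there is nothing to prove, and if the common value $\delta_{\p}(P,Q)$ equals $0$ for distinct $P,Q \in \mathcal P$, then by Remark \ref{Rdistance} distinct points of $\mathcal P$ reduce to distinct points of $\P^1(k(\p))$, so $\#\mathcal P \leqslant \#\P^1(k(\p)) = \#k(\p) + 1$ and we are done. So from now on assume $\#\mathcal P \geqslant 2$ and that the common value of the distance is some integer $m \geqslant 1$; in particular all points of $\mathcal P$ have the same reduction $\widetilde{P_0} \in \P^1(k(\p))$.

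Next I would move that common reduction point to $[0:1]$ by a M\"obius transformation in $\PGL_2(R)$, which has good reduction at $\p$ (Remark \ref{RGRed2}) and hence — being an isometry for $\delta_{\p}$ in both directions — preserves the hypothesis and the conclusion. After this change of coordinates every $P \in \mathcal P$ can be written in normalized form as $P = [x_P : 1]$ with $v_{\p}(x_P) \geqslant 1$ (the reduction being $[0:1]$ forces $v_{\p}(x_P) > 0$; note one point may be exactly $[0:1]$, i.e. $x_P = 0$, but at most one since the pairwise distances are finite). For distinct $P, Q \in \mathcal P$ equation \eqref{eqd} gives $\delta_{\p}(P,Q) = v_{\p}(x_P - x_Q) = m$. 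Now divide through by a uniformizer to the appropriate power: fix a $P_1 \in \mathcal P$ and consider the elements $u_P := (x_P - x_{P_1})/\pi^m$ for $P \neq P_1$, where $\pi$ is any element with $v_{\p}(\pi) = 1$; each $u_P$ lies in $R^*$. Reducing modulo $\p$, I claim the residues $\widetilde{u_P} \in k(\p)^*$ are pairwise distinct: indeed $v_{\p}\big((x_P - x_{P_1}) - (x_Q - x_{P_1})\big) = v_{\p}(x_P - x_Q) = m$, so $v_{\p}(u_P - u_Q) = 0$, i.e. $\widetilde{u_P} \neq \widetilde{u_Q}$. Hence the map $P \mapsto \widetilde{u_P}$ is an injection of $\mathcal P \setminus \{P_1\}$ into $k(\p)^*$, giving $\#\mathcal P - 1 \leqslant \#k(\p) - 1$, so $\#\mathcal P \leqslant \#k(\p) \leqslant \#k(\p) + 1$.

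The main thing to get right — and presumably where the flaw in the original argument lay — is the bookkeeping with normalized coordinates and the reduction of the quotients $u_P$: one must be careful that after the M\"obius change all points genuinely admit the form $[x_P:1]$ with $v_{\p}(x_P)\geqslant 1$, handle the possible point $[0:1]$ cleanly, and verify that the stronger bound $\#\mathcal P \leqslant \#k(\p)$ one actually obtains in the case $m \geqslant 1$ is consistent with (indeed better than) the stated bound $\#k(\p)+1$, the extra $+1$ being needed only to absorb the $m=0$ case. Everything else is a routine application of Remark \ref{Rdistance}, Remark \ref{RGRed2}, and the ultrametric inequality of Proposition \ref{P1} (which could alternatively be invoked in place of the direct valuation computation to show $v_{\p}(u_P - u_Q) = 0$).
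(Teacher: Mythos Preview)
Your proof is correct and follows essentially the same route as the paper's: a $\PGL_2(R)$ change of coordinates to normalize the reduction, followed by dividing out $\pi^m$ to spread the points apart and then counting residues. The only cosmetic difference is that the paper keeps things projective (defining new points $P_i' = [a_i : y_i]$ and showing they reduce to distinct points of $\P^1(k(\p)) \setminus \{[0:1]\}$), whereas you pass to affine coordinates $[x_P:1]$ and work directly with the units $u_P = (x_P - x_{P_1})/\pi^m$; your observation that in the case $m \geqslant 1$ one actually gets the sharper bound $\#\mathcal P \leqslant \#k(\p)$ is correct and is implicit in the paper's argument as well.
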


\begin{proof} 
Let $\mathcal P = \{P_i : i = 0 , \dotsc , n-1 \}$; we first show that we may assume $P_0=[0:1]$. Indeed, for every $i=0, \dotsc , n-1$ write $P_i=[x_i: y_i]$ in normalized form with respect to $\p$. In particular there exist $a, b \in R$ such that $ax_0 + by_0 = 1$. Now set $Q_i= \mu (P_i)$, where $\mu $ is the M\"obius transformation associated with the matrix
\begin{equation*}
M= \left[\begin{matrix}
y_0 & -x_0 \\
a & b
\end{matrix}\right] \in \PGL_2(R).
\end{equation*} 	
Note that $ \mu([X:Y])=[y_0X -x_0 Y: a X + b Y] $ is written in normalized form and has good reduction at $\p$. By Remarks \ref{RGRed1} and \ref{RGRed2}, the points $Q_i=[y_0x_i -x_0 y_i: a x_i + b y_i]=[x_i':y_i']$ are written in normalized form with respect to $\p$, furthermore $Q_0=[0:1]$. Now take $i \neq j$, then
\begin{equation*}
\delta_{\p}(Q_i, Q_j)=
v_{\p}\left( \det \left[  \begin{matrix}
x_i' & x_j'\\
y_i' & y_j'
\end{matrix}\right]\right)=
v_{\p}\left( \det \left( M \left[  \begin{matrix}
x_i & x_j\\
y_i & y_j
\end{matrix}\right]\right)\right)=
v_{\p}\left(\det \left[  \begin{matrix}
x_i & x_j\\
y_i & y_j
\end{matrix}\right] \right)= \delta_{\p}(P_i, P_j),
\end{equation*}
and we may replace the original points with the $Q_i$'s.

Next assume $P_0=[0:1]$ and $\delta_{\p}(P_i, P_j)= \delta $ for all $i \neq j$. If $\delta = 0$ the thesis is true because the $P_i$'s reduce to distinct points in $\P^1(k(\p))$. If $\delta$ is strictly positive let $\pi \in R$ be a uniformizer for the valuation $v_{\p}$, i.e. an element such that $v_{\p}(\pi)=1$. By hypothesis $v_{\p} (x_i)=\delta_{\p}(P_0, P_i)= \delta$ for every $i\neq 0$ and we can write $P_i=[\pi^{\delta}a_i: y_i]$ with $v_{\p}(a_i)= v_{\p}(y_i)=0$. Next consider the points $P_i'=[a_i:y_i]$ for $i= 1 , \dotsc , n-1$ and observe that none of them reduces to $[0:1]$ modulo $\p$. On the other hand if $1 \leqslant i < j \leqslant n-1$, then
\begin{equation*}
\delta_{\p}(P_i', P_j')= v_{\p}(a_iy_j - a_j y_i)= v_{\p}(\pi^{-\delta}(x_i y_j - x_j y_i))= \delta_{\p}(P_i, P_j) - \delta = 0.
\end{equation*}
Therefore the points $P_i'$ with $i=1, \dotsc , n-1$ reduce to distinct points of $\P^1(k(\p))$ different from $[0:1]$, hence $n-1 \leqslant \# k(\p)$.
\end{proof}


\section{Preliminaries}\label{preliminaries}
In this paper we are interested in rational maps defined over a global function field $K$, hence the need for \textit{global definitions} that take into account the behavior of $\phi$ with respect to \textit{all} primes of $K$. We have already observed that locally there exists a choice of homogeneous coordinates that is preferable, i.e. normalized with respect to the prime $\p$. Now let $S$ be a finite set of primes of $K$ containing $\infty$.\footnote{$S$ will be the set of primes of bad reduction: we require it to be nonempty for technical reasons. When working with number fields, $S$ should also contain the archimedean places, but this is not a concern in our setting since all primes are non-archimedean.}

\begin{deff}
We say that $P = [x : y]$ is written in \textbf{normalized form outside $S$} if it is in normalized form with respect to every prime outside $S$. We will also say that $[x : y]$ are \textbf{$S$-coprime coordinates} for $P$.

\noindent Write a rational map $\phi$ as
\[ \phi([X : Y]) = [F(X, Y): G(X, Y)] \]
with $F, G$ homogeneous polynomials in $K[X, Y]$. We say that the pair $[F : G]$ is in \textbf{normalized form outside $S$} if it is in normalized form with respect to every prime outside $S$. 

\noindent A rational map $\phi$ has \textbf{good reduction outside $S$} if it has good reduction at all primes not in $S$. 
\end{deff}

Furthermore, we define
\[ \begin{array}{ll}
\mathcal O_S = \{ \alpha \in K \text{ such that } v_{\p} (\alpha) \geqslant 0 \text{ for every } \p \notin S\} & \text{the ring of $S$-integers};\\
\mathcal O_S^* = \{ \alpha \in K \text{ such that } v_{\p} (\alpha) = 0 \text{ for every } \p \notin S\} &\text{the group of $S$-units}
\end{array}\]
and briefly recall some of their properties. To begin with, $\mathcal O_S$ is a Dedekind domain and its maximal ideals correspond to the primes outside $S$ \cite[Theorem 14.5]{Rosen02}. Furthermore its field of fractions is $K$ \cite[Proposition 3.2.5]{Sti09}. The group of $S$-units has rank $\#S-1$ by Dirichlet's unit theorem \cite[Proposition 14.2]{Rosen02} and its torsion part is the group of units of the full constant field. We will also need the fact that the ideal class group of $\mathcal O_S$ is finite (again by \cite[Proposition 14.2]{Rosen02}).

\begin{rem}
The following rational maps have good reduction (and are written in normalized form) outside $S$:
\begin{enumerate}[label = (\alph{enumi})]
    \item M\"obius transformations belonging to $\PGL_2(\mathcal O_S)$;
    \item polynomials in $\mathcal O_S[X]$ with leading coefficient in $\mathcal O_S^*$. If $K$ is a global function field and $S=\{\infty\}$, then $\mathcal O_S = \mathcal O$ as defined in Section \ref{Spol} and these are exactly the polynomials discussed there.
    \end{enumerate}
\end{rem}

It is not difficult to see that $P = [x : y]$ is written in normalized form outside $S$ if and only if $x \mathcal O_S + y \mathcal O_S = \mathcal O_S$, thus explaining the use of the term \textit{$S$-coprime coordinates}. Therefore a point $P=[x:y]$ can be written in $S$-coprime coordinates if and only if the fractional ideal $x \mathcal O_S + y \mathcal O_S$ is principal.
It would be nice to be able to write each point of the projective line in $S$-coprime coordinates: then the $\p$-adic logarithmic distance would be expressed by \eqref{eqd} for every $\p \notin S$. The difficulty arises from the fact that in general $\mathcal O_S$ is not a principal ideal domain. To avoid this problem we can take coordinates in a larger field: this strategy was first devised by Canci in \cite{Canci07} and it has been applied in several other papers.

We will state the result and show how to apply it to the present situation, but first we need some notation. Let $L$ be a finite normal extension of $K$ and $\mathbb S$ the set of primes of $L$ lying above the primes in $S$: we shall work with $\mathbb{S}$-integers $\mathcal{O}_{\mathbb{S}}$ and $\mathbb{S}$-units $\mathcal{O}_{\mathbb{S}}^*$ in $L$.
For any subset $\mathcal S$ of $K$ we denote by $\sqrt{\mathcal S}$ the \textit{radical} of $\mathcal S$ in $L$, i.e. $\sqrt{\mathcal S} = \{ \alpha \in L \text{ such that } \alpha^n \in \mathcal S \text{ for some } n \in \mathbb N \}$. Then it is immediate that $\sqrt{\mathcal O_S} = \mathcal O_{\mathbb S} \cap \sqrt{K}$ and $ \sqrt{\mathcal O_S^*} = \mathcal O_{\mathbb S}^*\cap \sqrt{K^*}$. 

\begin{deff}
With notation as above, let $\mathcal P$ be a set of points in $\P^1(L)$. An \textbf{$S$-radical choice of coordinates for $\mathcal P$} is given by coordinates $[x_P : y_P]$ for $P\in \mathcal{P}$ such that:
\begin{enumerate}
    \item $x_P,y_P\in L$ and $P$ is written in normalized form outside $\mathbb S$;
    \item $x_P y_Q - x_Q y_P \in \sqrt{K^*}$ for every pair of distinct points $P,Q\in \mathcal{P}$.
\end{enumerate}
\end{deff}

\begin{lemma}\label{Lcoord}\textup{\cite[Lemma 4.4]{CTV19}}
	Let $K$ be a global function field and $S$ a finite set of primes containing $\infty$. Then there exist a finite normal extension $L/K$ and an $S$-radical choice of coordinates for the set $\P^1(K)$.
\end{lemma}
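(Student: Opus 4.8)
The plan is to build $L$ so that a single fixed choice of homogeneous coordinates for the points of $\P^1(K)$ can be rescaled \emph{pointwise}, by elements of $\sqrt{K^*}$, into coordinates normalized outside $\mathbb S$; the radicality condition (2) is then almost automatic. So first I would fix, once and for all, a representative $[a_P:b_P]$ for each $P\in\P^1(K)$, and note that if we can produce $\lambda_P\in\sqrt{K^*}$ making $[\lambda_P a_P:\lambda_P b_P]$ normalized outside $\mathbb S$, then for distinct $P,Q$
\[ (\lambda_P a_P)(\lambda_Q b_Q)-(\lambda_Q a_Q)(\lambda_P b_P)=\lambda_P\lambda_Q\,(a_P b_Q-a_Q b_P)\in\sqrt{K^*}, \]
because $a_Pb_Q-a_Qb_P\in K^*$ (it is nonzero exactly when $P\neq Q$) and $\sqrt{K^*}$ is a multiplicative group containing $K^*$. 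Thus the task reduces to: find a finite normal $L/K$ for which every nonzero fractional ideal $\mathfrak a$ of $\mathcal O_S$ becomes, after extension to $\mathcal O_{\mathbb S}$, principal with a generator in $\sqrt{K^*}$ (then one takes $\lambda_P$ to be the inverse of such a generator for $\mathfrak a=a_P\mathcal O_S+b_P\mathcal O_S$).

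For this I would use the finiteness of $\mathrm{Cl}(\mathcal O_S)$ recalled in Section \ref{preliminaries}. Set $h=\#\mathrm{Cl}(\mathcal O_S)$, choose fractional ideals $\mathfrak c_1,\dots,\mathfrak c_h$ representing \emph{all} ideal classes (with $\mathfrak c_1=\mathcal O_S$), and write $\mathfrak c_i^h=\gamma_i\mathcal O_S$ with $\gamma_i\in K^*$, which is possible since $[\mathfrak c_i]$ has order dividing $h$ (take $\gamma_1=1$). Let $L$ be the normal closure over $K$ of $K(\gamma_1^{1/h},\dots,\gamma_h^{1/h})$, a finite extension, and let $\mathbb S$ be the set of primes of $L$ above $S$, so $\mathcal O_{\mathbb S}$ is Dedekind and its nonzero fractional ideals form a torsion-free (indeed free) abelian group. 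Given $P=[x:y]$, set $\mathfrak a=x\mathcal O_S+y\mathcal O_S$ and pick $i$ with $[\mathfrak c_i]=[\mathfrak a]^{-1}$, so $\mathfrak a\mathfrak c_i=\delta\mathcal O_S$ is principal for some $\delta\in K^*$; then $(\mathfrak c_i\mathcal O_{\mathbb S})^h=\gamma_i\mathcal O_{\mathbb S}=(\gamma_i^{1/h}\mathcal O_{\mathbb S})^h$ forces $\mathfrak c_i\mathcal O_{\mathbb S}=\gamma_i^{1/h}\mathcal O_{\mathbb S}$ by torsion-freeness, whence $\mathfrak a\,\mathcal O_{\mathbb S}=\delta\gamma_i^{-1/h}\mathcal O_{\mathbb S}$. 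Taking $\lambda_P:=\delta^{-1}\gamma_i^{1/h}\in L$ gives $\lambda_P x\,\mathcal O_{\mathbb S}+\lambda_P y\,\mathcal O_{\mathbb S}=\lambda_P\,\mathfrak a\,\mathcal O_{\mathbb S}=\mathcal O_{\mathbb S}$, so $[\lambda_P x:\lambda_P y]$ is normalized outside $\mathbb S$, and $\lambda_P^h=\delta^{-h}\gamma_i\in K^*$, so $\lambda_P\in\sqrt{K^*}$.

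Since the index $i$ runs over the \emph{fixed finite set} $\{1,\dots,h\}$, the single extension $L$ serves every point $P$, and combined with the reduction of the first paragraph this proves the lemma. The conceptual heart of the argument — and the step I would state most carefully — is exactly this: exploiting the finiteness of the class group to handle the infinitely many points of $\P^1(K)$ with one common extension, after which everything is routine bookkeeping with fractional ideals (in particular the torsion-freeness of the ideal group, used to pass from $\mathfrak b^h=\mathfrak c^h$ to $\mathfrak b=\mathfrak c$).

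The one genuine subtlety in characteristic $p$ is that $h$ may be divisible by $p$, so the polynomials $X^h-\gamma_i$ need not be separable and $L/K$ may be inseparable. This is harmless, because the argument only uses that $\mathcal O_{\mathbb S}$ is Dedekind with a torsion-free group of fractional ideals and never invokes Galois theory — but I would phrase the steps so as not to tacitly assume separability (e.g. speak of the normal closure rather than a splitting field, and argue directly with the free abelian group of fractional ideals of $\mathcal O_{\mathbb S}$).
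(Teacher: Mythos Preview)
Your proof is correct and follows exactly the approach the paper has in mind: the paper does not give its own proof of this lemma but simply cites \cite[Lemma 4.4]{CTV19} and remarks that the argument there, which relies only on the finiteness of $\mathrm{Cl}(\mathcal O_S)$, carries over verbatim to global function fields (and later, in Section \ref{Sproof}, points the reader to the discussion before \cite[Lemma 4.4]{CTV19} for the explicit construction of $L$ from the class group). Your write-up is precisely that construction, with the helpful additional observation about possible inseparability when $p\mid h$.
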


Lemma \ref{Lcoord} was originally stated and used on number fields, however its proof only relies on the finiteness of the ideal class group of $\mathcal O_S$, so it holds for global function fields as well.

For every prime $\p$ of $K$ we fix a prime $\mathfrak P$ in $L$ above $\p$ and, with a little abuse of notation, for any $\alpha\in \sqrt{K^*}$ we shall still write $v_\p(\alpha)$ to denote $v_{\mathfrak{P}}(\alpha)$.\footnote{The two valuations coincide over $K$ up to multiplication by the ramification index: we allow $v_{\p}$ to assume fractional values over $\sqrt{K}$ instead of taking the normalized valuation over $L$. Then it is possible to express the $\p$-adic distance as in \eqref{eq:dist}.}
Now fix an $S$-radical choice of coordinates for the projective line $\P^1(K)$. For every prime of $K$ the $\p$-adic logarithmic distance between two points is then expressed by 
\begin{equation}\label{eq:dist}
    \delta_{\p} (P, Q ) = v_{\p} (x_P y_Q - x_Q y_P).
\end{equation}

Next consider the following situation: $\phi$ is a rational map defined over $K$ with good reduction outside $S$ and $P_0 \in \P^1(K) $ a periodic point of minimal period $n$. Write its orbit as 
\begin{equation} \label{eq:cycle} P_0 \mapsto P_1 \mapsto \cdots \mapsto P_{n-1} \mapsto P_0 \end{equation}
with $P_i = [x_i : y_i]$ (in $S$-radical coordinates). Working as in the proof of Lemma \ref{L1new}, we consider the map $\mu$ associated with the matrix of determinant $1$
\[M = \left[\begin{matrix}
y_0 & -x_0 \\
a & b
\end{matrix}\right] \in \PGL_2(\mathcal O_{\mathbb S}).\] It is easy to check that for every pair of points $P, Q$ the quantity $x_P y_Q - x_Q y_P$ is $\mu$-invariant (again, see the proof of Lemma \ref{L1new} for details). Then, after replacing $P$ with $\mu(P)$ and $\phi$ with $\mu \circ \phi \circ \mu^{-1}$, we can assume that:
\begin{enumerate}
    \item $\phi$ is a rational map defined over $L$ with good reduction outside $\mathbb S$;
    \item $P_0 = [0 : 1]$ is a periodic point for $\phi$ of exact period $n$;
    \item there exists an $S$-radical choice of coordinates for the orbit of $P_0$.
\end{enumerate}

Next we are going to introduce some notation that will be used throughout Section \ref{Sproof}.
For every prime of good reduction, applying repeatedly the triangle inequality \eqref{triang} and Proposition \ref{P3} (i), we obtain
\begin{align*}
	\delta_{\p} (P_0, P_i) \geqslant \delta_{\p}(P_0, P_1) &\quad\text{for every } \p \notin \mathbb S,\text{ i.e.}\\
	v_{\p}(x_i) \geqslant v_{\p}(x_1) &\quad\text{for every } \p \notin \mathbb S.
\end{align*} 
Moreover $x_i = x_i \cdot 1 - 0 \cdot y_i \in \sqrt{K^*}$ for every $i$. Now define $A_i = x_i / x_1$, then $A_i$ is a nonzero element of $\mathcal O_{\mathbb S}\cap \sqrt{K^*}=\sqrt{\mathcal O_S}$. Similarly, it is not difficult to see that if $i$ divides $j$ modulo $n$, then $A_i$ divides $A_j$ in $\sqrt{\mathcal O_S}$. In particular if $i$ is relatively prime to $n$, then $A_i$ divides $A_1 = 1$, meaning that $A_i$ belongs to $\sqrt{\mathcal O_S^*}$. Then up to multiplication of both coordinates of $P_i$ by an element of $\sqrt{\mathcal O_S^*}$ we may assume $A_i = 1$ without affecting the $S$-radical choice of coordinates.
To sum it up:
\begin{enumerate}
    \item $P_0 = [0:1], \, P_i = [A_i x_1 : y_i]$ for every $i \neq 0$ is an $S$-radical choice of coordinates for the cycle of $P_0$;
    \item $A_i$ is nonzero and belongs to $\sqrt{\mathcal O_S}$ for every $i \neq 0$;
    \item $A_i = 1$ for every $i$ prime with $n$.
\end{enumerate}

We conclude this section with a formula which will be crucial in the proofs of the next section: take three indexes $(j, i, d) $ such that $\gcd(i-j,n)=d$, then by Lemma \ref{L0}
	\begin{align}
	\delta_{\p}(P_i, P_j)= \delta_{\p}(P_0,P_d) &\quad\text{for every $\p \notin \mathbb S$,} \nonumber \\
	v_{\p}(x_1(A_iy_j-A_jy_i))=v_{\p}(x_1 A_d) &\quad\text{for every $\p \notin \mathbb S$, i.e.}\nonumber \\
	A_i y_j - A_j y_i=A_d u &\quad\text{for some $u \in \sqrt{\mathcal{O}_S^*}.$}\label{eq3}
	\end{align}


\section{Cycles for rational maps}\label{Sproof}

In the remainder of this paper we will consider rational maps $\phi$ defined over a global function field $K$ with exactly one prime of bad reduction. Let $\F_q$ denote the full constant field of $K$ and $S$ the set of primes of bad reduction: in this case $\mathcal O_S^* = \F_q^*$. Let $L$ be the finite normal extension given by Lemma \ref{Lcoord}: $L$ can be explicitly constructed, provided that one knows the ideal class group of $\mathcal O_S$ (the reader is referred to the discussion before \cite[Lemma 4.4]{CTV19} for details), but in general we do not even know its degree over $K$. The group $\sqrt{\mathcal O_S^*}$ is by definition the radical of $\F_q^*$ in $L^*$, i.e. $\sqrt{\mathcal{O}_S^*}=\F^*$, where $\F$ is the full constant field of $L$.

The aim of this section is to prove Theorem \ref{T1}, which can be regarded as a generalization of equation \eqref{eqpol}. We will then apply Lemma \ref{L1new} to bound the length of cycles in terms of the characteristic of the field and the degree of the extension $K/\F_p(t)$. A further application of Theorem \ref{T1} can be found in Section \ref{Sdbound}, where we discuss bounds that depend uniquely on the degree of the map (with one caveat, as in the polynomial case).

\begin{theo}\label{T1}
	Let $\phi \colon \P^1\rightarrow \P^1$ be a rational map defined over a global function field $K$ with one prime of bad reduction and let $\mathcal C \subset \P^1(K)$ be a cycle of length $n >1$. Then for every fixed prime of good reduction $\p$ the quantity $\delta_{\p}(P, Q)$ with $(P, Q)$ varying over the pairs of distinct points in $\mathcal C$ is constant.
\end{theo}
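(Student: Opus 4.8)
The statement is the $\P^1$-analogue of \eqref{eqpol}, with \eqref{eq3} playing the role of the chains of ideal inclusions used in the polynomial case, so I would follow a similar route. Using the normalisation fixed at the end of Section~\ref{preliminaries} I may assume $P_0=[0:1]$ and write the cycle in $S$-radical coordinates as $P_i=[A_ix_1:y_i]$, with $A_1=1$, with $A_i=1$ whenever $\gcd(i,n)=1$, and with each $A_i$ a nonzero element of $\sqrt{\mathcal O_S}$. Since $\delta_\p(P_i,P_j)=v_\p(x_1)+v_\p(A_iy_j-A_jy_i)$ and, by Lemma~\ref{L0}, this equals $\delta_\p(P_0,P_{\gcd(i-j,n)})=v_\p(x_1)+v_\p(A_{\gcd(i-j,n)})$, the theorem is equivalent to the assertion that $v_\p(A_d)=0$ for every prime of good reduction $\p$ and every proper divisor $d\mid n$; and since $A_d\in\sqrt{K^*}$, this is in turn equivalent to $A_d\in\mathcal O_{\mathbb S}^*\cap\sqrt{K^*}=\sqrt{\mathcal O_S^*}=\F^*$. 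So the goal is to show $A_d\in\F^*$ for every proper divisor $d$ of $n$.

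The engine is \eqref{eq3}. Applying it to the pairs $(1,k)$ and using $A_1=1$ yields $y_k=A_ky_1+A_{\gcd(k-1,n)}u_{1k}$ with $u_{1k}\in\F^*$, for each $k\not\equiv1\pmod n$. Substituting these into \eqref{eq3} for a general pair $(i,j)$ with $i,j\not\equiv1$ and $i\not\equiv j$, the term $y_1$ cancels and one is left with
\[
A_i\,A_{\gcd(j-1,n)}\,u_{1j}\;-\;A_j\,A_{\gcd(i-1,n)}\,u_{1i}\;=\;A_{\gcd(i-j,n)}\,u_{ij},\qquad u_{1i},u_{1j},u_{ij}\in\F^*,
\]
and, taking $j=0$ in \eqref{eq3}, $A_m=A_{\gcd(m,n)}\,u$ with $u\in\F^*$ for every index $m$, so every $A$ above may be taken to be indexed by a proper divisor of $n$. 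I would then prove $A_d\in\F^*$ for the proper divisors $d$ of $n$ by induction, processing them in a suitable order. At the step for a divisor $D$, the idea is to pick the indices $i,j$ modulo $n$ so that, after dividing the displayed identity by the largest common factor built from the $A$'s already known to lie in $\F^*$, what remains is a \emph{nonconstant} polynomial equation for $A_D$ with coefficients in the finite field $\F$. This forces $A_D$ to be algebraic over $\F_p$, hence to lie in the full constant field $\F$ of $L$; and $A_D\neq0$ because $P_D\neq P_0$, so $A_D\in\F^*$. (That the equation is nonconstant is automatic: every $A_m$ with $m\mid n$, $m<n$, is nonzero, so no term can be forced to vanish.) Once all $A_d\in\F^*$, we get $\delta_\p(P_0,P_d)=v_\p(x_1)=\delta_\p(P_0,P_1)$ for every proper divisor $d$, and Lemma~\ref{L0} promotes this to the constancy of $\delta_\p$ on all pairs of distinct points of $\mathcal C$.

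The delicate point is the combinatorial choice of $i$ and $j$. When $n$ has no small prime divisor one can just take $i$ with $\gcd(i,n)=D$ and $j$ coprime to $n$ with $j-1$ and $i-j$ also coprime to $n$, and the identity is already linear in $A_D$. This fails as soon as $2\mid n$ (e.g.\ $n=6$, $D=3$): two consecutive residues modulo $n$ cannot both be coprime to $n$, so some \emph{unwanted} $A$'s with even index inevitably appear. The remedy is to engineer the indices so that these unwanted factors occur to the same power in every term and hence cancel, leaving a relation of strictly smaller degree — possibly linear — in $A_D$. Finding such a choice uniformly in $n$ and $D$, and checking in each configuration that the surviving equation over $\F$ is genuinely nonconstant, is the technical heart of the proof; the rest is formal manipulation of \eqref{eq3} and Lemma~\ref{L0}.
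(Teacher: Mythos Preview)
Your reduction to proving $A_d\in\F^*$ for every proper divisor $d$ of $n$ is correct and is exactly the content of Lemma~\ref{LT1}; the three-term identity you derive is correct as well. But you leave the proof of $A_d\in\F^*$ as an open-ended sketch, and the parenthetical justification that ``nonconstancy is automatic'' is not valid: the fact that each of the three terms is nonzero does not prevent $A_D$ from occurring with the \emph{same} exponent in all three terms after your cancellation of the unwanted factors, in which case the identity collapses to a relation among elements of $\F^*$ and says nothing about $A_D$. You have not exhibited, for arbitrary $n$ and $D$, a choice of $(i,j)$ avoiding this; that is precisely the content you defer to ``the technical heart'', so the argument as written has a genuine gap.

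The paper's proof of Lemma~\ref{LT1} is complete and considerably simpler, because it does not eliminate the $y$'s via your substitution (which is what creates products of $A$'s). Instead it subtracts two instances of \eqref{eq3} that share one index, so that a single $y$-term cancels and the relation stays \emph{linear} in one $A$. Concretely: if $m$ is minimal with $A_m\notin\F^*$, then after rescaling $A_i=1$ for all $i<m$, and two applications of \eqref{eq3} give $A_my_1-y_m=u_1$ and $A_my_2-y_m=u_2$ with $u_1,u_2\in\F^*$; subtracting yields $A_m(y_1-y_2)=u_1-u_2\in\F$. The key observation---which your scheme never isolates---is that $y_1\neq y_2$, since $P_1=[x_1:y_1]$ and $P_2=[x_1:y_2]$ are distinct points with the same first coordinate. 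Hence $A_m$ and $y_1-y_2$ are $\mathbb S$-integers whose product lies in $\F^*$, forcing $A_m\in\F^*$, a contradiction. This gives $m=2$ and $n$ even; the cases $n\equiv0$ and $n\equiv2\pmod4$ are then dispatched by the same subtraction trick with different index pairs (always exploiting that two points with equal first coordinate must differ in the second). No induction over the divisor lattice and no combinatorial search are required.
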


\begin{rem}\label{RLenRedCyc}
Let $\p$ be a prime of good reduction for $\phi$, then a periodic point of minimal period $n$ reduces modulo $\p$ to a periodic point for $\widetilde{\phi}$ of exact period $m$ and $m$ divides $n$. A powerful theorem by Morton--Silverman and Zieve (see \cite[Theorem 2.21]{Sil07}) provides the possible values for $m$ (for rational maps of degree at least two). In the present situation ($\phi$ defined over a global function field and with one prime of bad reduction) Theorem \ref{T1} yields $m = n$ or $m = 1$. Indeed let $\p$ be a prime of good reduction, then for every two distinct points $P, Q$ belonging to an $n$-cycle, either $\delta_{\p} (P, Q) = 0 $ (that is, $\widetilde{P} \neq \widetilde{Q}$), or $\delta_{\p} (P, Q) > 0 $ (therefore $\widetilde{P} = \widetilde{Q}$). In the first case the length of the reduced cycle is still $n$, in the latter all points in the cycle reduce to a unique fixed point for $\widetilde{\phi}$.
\end{rem}

We take care of most of the technicalities needed for the proof of Theorem \ref{T1} in the following lemma.

\begin{lemma}\label{LT1}
Let $K$ be a global function field, fix a prime of $K$ at infinity and set $S = \{ \infty \}$. Take $L$ a finite normal extension of $K$ and $\mathbb S$ the set of primes of $L$ above the prime at infinity. Take $\phi$ and $P_0$ such that
\begin{enumerate}[label = \textnormal{(\alph{enumi})}]
    \item $\phi$ is a rational map defined over $L$ with good reduction outside $\mathbb S$;
    \item $P_0 = [0 : 1]$ is a periodic point for $\phi$ with orbit $\{P_0,\dots,P_{n-1}\}$;
    \item $P_0 = [0:1], \, P_i = [A_i x_1 : y_i]$ for $i\neq 0$ is an $S$-radical choice of coordinates (with the notation introduced in Section \ref{preliminaries}).
\end{enumerate}
Then $A_i$ belongs to $\F^*$ for every $i = 1 , \dotsc , n-1$.
\end{lemma}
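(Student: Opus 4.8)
The plan is to turn the cross‑ratio type relations \eqref{eq3} into a two–term linear recursion for the coordinates $A_i$ and then read off the conclusion from the cyclic structure.

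First I would dispose of the trivial cases $n\leqslant 2$ and assume $n\geqslant 3$. Set $v_i=(A_i,y_i)\in L^2$ and $D_{ij}=A_iy_j-A_jy_i$. I would record the two instances of \eqref{eq3} that matter: since $\gcd(1,n)=1$ and $A_1=1$, for consecutive indices one gets $D_{i,i+1}\in\F^*$, and since $\gcd\bigl((i+1)-(i-1),n\bigr)=\gcd(2,n)$ one gets $D_{i-1,i+1}=\epsilon_i\,A_{\gcd(2,n)}$ with $\epsilon_i\in\F^*$. The key step is then the Cramer dependence $D_{i,i+1}v_{i-1}-D_{i-1,i+1}v_i+D_{i-1,i}v_{i+1}=0$ among the three vectors $v_{i-1},v_i,v_{i+1}$ in the plane $L^2$: reading off the first coordinate and dividing by the unit $D_{i-1,i}$ yields
\[
A_{i+1}=\gamma_i\,A_{\gcd(2,n)}\,A_i-\beta_i\,A_{i-1},\qquad \gamma_i=\frac{\epsilon_i}{D_{i-1,i}}\in\F^*,\quad \beta_i=\frac{D_{i,i+1}}{D_{i-1,i}}\in\F^*,
\]
valid for $i=1,\dots,n-2$, with initial data $A_0=0$, $A_1=1$.

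If $n$ is odd then $\gcd(2,n)=1$, so $A_{\gcd(2,n)}=A_1=1$ and the recursion has coefficients in $\F$; an immediate induction gives $A_i\in\F$ for all $i$, and each $A_i$ with $i\not\equiv0\pmod n$ is nonzero (otherwise $P_i=[0:1]=P_0$), which settles this case. If $n$ is even then $A_{\gcd(2,n)}=A_2$, so the recursion exhibits every $A_i$ as a value $p_i(A_2)$ of a polynomial $p_i\in\F[T]$; a short induction on leading terms (using $\gamma_i\neq0$, $A_0=0$, $A_1=1$) shows $\deg p_i=i-1$, so $p_{n-1}$ is non‑constant. Since $\gcd(n-1,n)=1$ we have $A_{n-1}=1$, hence $A_2$ is a root of the nonzero polynomial $p_{n-1}(T)-1\in\F[T]$; therefore $A_2$ is algebraic over $\F$, and since $\F$ is the full constant field of $L$, hence algebraically closed in $L$, we get $A_2\in\F$. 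As $A_2\neq0$ this gives $A_2\in\F^*$, and then $A_i=p_i(A_2)\in\F^*$ for every $i$.

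I expect the even case to be the only genuine obstacle: one must check that $p_{n-1}$ really has positive degree, i.e. that its leading coefficient — a product of the units $\gamma_i$ extracted from \eqref{eq3} — does not vanish, and one must use that the constant field is algebraically closed in $L$ to pass from "$A_2$ algebraic over $\F$" to "$A_2\in\F$". The rest is routine manipulation of \eqref{eq3} together with the Cramer identity.
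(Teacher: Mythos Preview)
Your argument is correct and takes a genuinely different route from the paper's proof. The paper proceeds by a case analysis on $n$ modulo $4$: first it shows (by subtracting two instances of \eqref{eq3}) that if any $A_i\notin\F^*$ then $n$ is even and already $A_2\notin\F^*$; then it treats $n\equiv 0\pmod 4$ and $n\equiv 2\pmod 4$ separately, in each case manufacturing a pair of relations from \eqref{eq3} whose difference forces a specific $A_j$ (namely $A_{n/2}$ or $A_2$) into $\F^*$, contradicting the first step. Your approach instead packages \eqref{eq3} into a single three-term recursion via the Pl\"ucker/Cramer identity in $L^2$, so that every $A_i$ is the value at $A_2$ of an explicit polynomial $p_i\in\F[T]$; the odd case falls out immediately, and in the even case the condition $A_{n-1}=1$ makes $A_2$ a root of the nonzero polynomial $p_{n-1}(T)-1\in\F[T]$, whence $A_2$ is algebraic over $\F$ and therefore lies in the full constant field. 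This is cleaner and more uniform than the paper's proof: there is no mod-$4$ split, and the crucial use of $\#S=1$ is isolated in the single step ``algebraic over $\F$ implies in $\F$'', which also explains transparently why the argument cannot work over number fields (cf.\ Example~\ref{Enfield}). The paper's proof, on the other hand, is slightly more elementary in that it never invokes algebraic closure of the constant field in $L$---it only uses that $\F$ is closed under subtraction---and so in principle survives in settings where your last step would not.
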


\begin{proof}[Proof of Theorem \ref{T1}]
Fix the prime of bad reduction for $\phi$ at infinity and let $S = \{ \infty \}$.
Fix an $S$-radical choice of coordinates for the projective line $\P^1(K)$ and let $\mathcal C = \{ P_i : i=0,\dots,n-1 \}$. After performing the reduction step described in Section \ref{preliminaries}, we can apply Lemma \ref{LT1}, then $P_0 = [0:1]$ and $P_i = [A_i x_1 :y_i]$ with $A_i \in \F^*$ for every $i\neq 0$. Then for every prime of good reduction
\begin{equation} \label{eq:T1}
\delta_{\p} (P_0, P_i ) = v_{\p} (A_i x_1) = v_{\p} (x_1) = \delta_{\p} (P_0 , P_1) \quad \text{for every } i \neq 0.
\end{equation}
Now take any two distinct points in $\mathcal C$, let us say $P_i$ and $P_j$ with $i < j$, then for every good prime $\p$
\begin{align*}
     \delta_{\p} (P_i, P_j) &= \delta_{\p}(P_0, P_{j-i})
     &&\hspace{-8em}\text{by Proposition \ref{P3} (i)}
    \\
    &=\delta_{\p} (P_0 , P_1) &&\hspace{-8em}\text{by \eqref{eq:T1}.} 
    \qedhere
\end{align*}
\end{proof}

For the proof of Lemma \ref{LT1} we heavily use the construction carried on in Section \ref{preliminaries}. 
We will apply almost exclusively equation \eqref{eq3}, but the proofs will also rely on the fact that $\sqrt{\mathcal O_S^*} = \F^*$ is the group of units of a field, a circumstance that is specific to this setting.
A simple counterexample to Theorem \ref{T1} when this condition is not satisfied is given at the end of the section (Example \ref{Enfield}).

\begin{proof}[Proof of Lemma \ref{LT1}]
The thesis follows from the statements below:
\begin{enumerate}
    \item if there exists an index $i$ such that $A_i \notin \F^*$, then $n$ is even and $A_2 \notin \F^*$;
    \item if $n \equiv 0 \pmod 4$, then $A_i \in \F^*$ for every $i\neq 0$;
    \item if $n \equiv 2 \pmod 4$, then $A_i \in \F^*$ for every $i\neq 0$. 
\end{enumerate}
As remarked in Section \ref{preliminaries} we can assume $A_i=1$ for every $i$ prime with $n$.

We begin with the proof of (i): let $m$ be the smallest positive integer satisfying $A_m \notin \F^*$ and assume $m >2$. Since $A_i \in \F^*$ for every $i < m$, we can assume $A_i = 1$ for every $i < m$ after multiplication by an element of $\F^*$. Take $i=1, 2$ and set $d_i=\gcd(m-i, n)$, then $d_i < m$. By replacing $j$ with $m$ in \eqref{eq3} (with $A_i =A_{d_i} = 1$ by the previous remarks) we have 
\begin{equation*}
A_m y_i - y_m=u_i \quad \text{with $u_i \in \F^*$ for $i=1,2$.}
\end{equation*}
Note that $y_1 \neq y_2$ since $P_1=[x_1:y_1]$ and $P_2=[x_1:y_2]$ are distinct points. By subtracting the two previous equations we get \[0 \neq A_m (y_1 - y_2)= u_1-u_2.\]
Then $A_m (y_1 - y_2) \in \F^*$ and since $y_1$ and $y_2$ are both $\mathbb S$-integers, $A_m$ is in $\F^*$, a contradiction. Then $m = 2$, that is $A_2 \notin \F^*$ and $n$ is even because $A_i = 1$ for every $i$ prime with $n$.

Next we prove (ii). Assume that $n$ is divided by $4$ and suppose the thesis is false, in particular $A_2 \notin \F^*$ by (i). Let $n'=n/2$, then $n'$ is even and $A_2$ divides $A_{n'}$ in $\sqrt{\mathcal O_S} $. We will show that $A_{n'}$ belongs to $\F^*$, thus contradicting $A_2 \notin \F^*$.
Note that if $\gcd(j,n)=1$, then $\gcd(n'-j, n)=1$ as well because $n$ and $n'$ are divided by the same primes. 
Then equation \eqref{eq3} applied to $(j,n',1)$ yields 
\begin{equation*}
	A_{n'} y_j - y_{n'}=u_j \quad \text{with $u_j \in \F^*$.}
\end{equation*}
Next take an index $k \neq j$ such that $\gcd(k, n)=1$ as well (it is possible because $\varphi(n)\geq 2$), then again 
\begin{equation*}
A_{n'} y_k - y_{n'}=u_k \quad \text{with $u_k \in \F^*$}
\end{equation*}
and subtracting the two expressions (keep in mind that $y_j \neq y_k$ since $P_j \neq P_k$) we get
	\[0 \neq A_{n'} (y_j-y_k)=u_j - u_k.\]
Then $A_{n'} (y_j - y_k) \in \F^*$ and all the terms in the expression are $\mathbb S$-integers, therefore $A_{n'} \in \F^*$ as well.

The proof of (iii) is similar. Let $n= 2 n'$ with $n'$ an odd positive integer. The case $n = 2$ is trivial, therefore we set $n' >1$. First, by applying (i) to the map $\phi^2$ we may assume $A_i = A_2$ for every nonzero even index $i$. Next we prove that $A_i$ belongs to $\F^*$ for every odd index $i$ dividing $n$. Indeed, take $j,k$ distinct indexes such that $\gcd(n'-j, n)= \gcd(n'-k, n)=1$: this is possible because $\varphi(n) \geqslant 2$. Observe that $j$ and $k$ are even, hence we can write $P_j=[A_2x_1: y_j]$ and $P_k= [A_2 x_1: y_k]$. Apply \eqref{eq3} to $(j, n', 1) $ and $(k, n', 1)$ respectively and obtain
\begin{align*}
	 	A_{n'} y_j - A_2 y_{n'}= u_j \quad &\text{with $u_j \in \F^*$}\\
		A_{n'} y_k - A_2 y_{n'}= u_k \quad &\text{with $u_k \in \F^*$,}
\end{align*} 
then reasoning as in the previous step we get $A_{n'} \in \F^*$. Now, if $i$ is odd and divides $n$, then it divides $n'$: hence $A_i$ divides $A_{n'}$ in $\sqrt{\mathcal O_S}$, that is $A_i \in \F^*$.

If we prove that $A_2$ belongs to $\F^*$ we are done. Take $i=1, n-1$ (then $A_i=1$), and let $d_i= \gcd(2-i, n)$. Then $d_i$ is odd and divides $n$, therefore we can assume $A_{d_i} = 1$. Apply equation \eqref{eq3} to $(i, 2, d_i)$, then
	\begin{equation*}
	    A_2 y_i - y_2=u_i \quad \text{with $u_i \in \F^*$ for $i=1,n-1$.}
	\end{equation*}
	Subtracting the two expressions we get
	\[0 \neq A_2(y_1-y_{n-1})=u_1-u_{n-1}\]
	and finally $A_2 \in \F^*$.
\end{proof}


\begin{cor}\label{Cbound0}
Let $\mathcal C \subseteq \P^1(K)$ be a cycle for the map $\phi$. Then 
\begin{enumerate}
	\item $\# \mathcal C \leqslant \# k(\p) + 1$ for every prime of good reduction $\p$;
	\item if $[K:\F_p(t)]=D$, then $\# \mathcal{C} \leqslant p^D+1$.
\end{enumerate}
\end{cor}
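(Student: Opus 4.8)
The plan is to deduce the corollary directly from Theorem \ref{T1} together with Lemma \ref{L1new}, handling separately the trivial cases of cycles of length at most one. First I would observe that if $\#\mathcal C \leqslant 1$ then both inequalities are immediate, since $p^D + 1 \geqslant 2$ and $\#k(\p) + 1 \geqslant 2$ for every prime $\p$; so we may assume $\mathcal C$ is a cycle of length $n > 1$. Then Theorem \ref{T1} applies and tells us that, for every fixed prime of good reduction $\p$, the quantity $\delta_{\p}(P, Q)$ is constant as $(P, Q)$ ranges over the pairs of distinct points of $\mathcal C$. This is exactly the hypothesis of Lemma \ref{L1new} with $\mathcal P = \mathcal C$, which therefore yields $\#\mathcal C \leqslant \#k(\p) + 1$. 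This proves (i).

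For part (ii), I would bound $\#k(\p)$ uniformly in $\p$. The residue field $k(\p)$ at any prime $\p$ of $K$ is a finite extension of the residue field at the prime of $\F_p(t)$ lying below $\p$; however, this alone does not bound $\#k(\p)$ since the residue degree can grow. The correct observation is instead that the full constant field $\F_q$ of $K$ embeds into $k(\p)$ for \emph{every} prime $\p$, and that $\#k(\p)$ may be arbitrarily large — so I cannot hope to bound $\#k(\p)$ itself. Rather, I would argue as follows: $\#\mathcal C$ is an integer independent of $\p$, and by part (i) it is bounded by $\#k(\p) + 1$ for every good prime; but more to the point, the periodic points lie in $\P^1(K)$, so by the argument already used in the proof of Theorem \ref{T1} (reduction to the situation of Section \ref{preliminaries} and application of Lemma \ref{LT1}) the orbit is described by coordinates $P_i = [A_i x_1 : y_i]$ with $A_i \in \F^*$. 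The point is that $\#\mathcal C - 1$ equals the number of distinct values of a suitable quotient, all lying in the full constant field $\F_q$ of $K$ — and $\#\F_q \leqslant p^D$ since $\F_q \subseteq K$ and $[K : \F_p(t)] = D$ forces $[\F_q : \F_p] \leqslant D$. Thus $\#\mathcal C \leqslant \#\F_q + 1 \leqslant p^D + 1$.

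More precisely, here is the clean route for (ii): choose any prime $\p$ of good reduction whose residue field is $\F_q$ itself — such a prime exists because $K$, being a global function field with full constant field $\F_q$, has infinitely many places of degree one over $\F_q$, all but finitely many of which are of good reduction for $\phi$. Wait — degree-one places have residue field exactly $\F_q$. Applying part (i) at such a $\p$ gives $\#\mathcal C \leqslant \#k(\p) + 1 = q + 1 \leqslant p^D + 1$, using $q = p^{[\F_q : \F_p]} \leqslant p^D$ by Remark \ref{Rqp^D}. This is the argument I would write down.

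The main obstacle I anticipate is ensuring the existence of a place of $K$ of degree one over $\F_q$ that is also a place of good reduction for $\phi$: the first is a standard fact about global function fields (there are infinitely many degree-one places, e.g. by the function-field prime number theorem or simply because the number of such places is $q + 1 - $ (genus term) $+ o(1)$ and in any case one can pass to a constant field extension if needed), and good reduction excludes only finitely many primes. An alternative that sidesteps even this is to invoke Lemma \ref{LT1} directly: after the reduction of Section \ref{preliminaries} the distinct quantities distinguishing the points of the cycle are $\mathbb S$-units in the full constant field of $L$, but one checks — as in equation \eqref{eqpol} and the polynomial case — that they may be taken in $\F_q$, giving at most $\#\F_q = q$ nonzero values and hence $\#\mathcal C \leqslant q + 1 \leqslant p^D + 1$. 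Either way the numerical conclusion $q \leqslant p^D$ is immediate from $\F_q \subseteq K$ and $D = [K : \F_p(t)]$, exactly as in Remark \ref{Rqp^D}.
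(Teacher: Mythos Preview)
Your argument for part (i) is correct and matches the paper's: Theorem \ref{T1} verifies the hypothesis of Lemma \ref{L1new} for $\mathcal P = \mathcal C$, and the bound follows.

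For part (ii), however, your ``clean route'' has a genuine gap. A global function field with full constant field $\F_q$ does \emph{not} have infinitely many places of degree one over $\F_q$: it has finitely many (the $\F_q$-rational points of the underlying curve), and this number can even be zero for curves of large enough genus relative to $q$. So you cannot be sure there is a good prime with residue field exactly $\F_q$. Your proposed alternative through Lemma \ref{LT1} does not close this gap either: that lemma only places the $A_i$ in $\F^*$, the full constant field of the auxiliary extension $L$, and the assertion that ``one checks \ldots\ they may be taken in $\F_q$'' is neither proved in the paper nor obvious --- and even if it were, the $A_i$ need not be pairwise distinct, so counting their values would not directly bound $\#\mathcal C$.

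The paper's fix is simple and avoids all of this: instead of looking for a degree-one place of $K$, take a prime $\p$ of $K$ lying over a degree-one prime of $\F_p(t)$. There are $p+1 \geqslant 3$ degree-one primes of $\F_p(t)$, while $\phi$ has a single bad prime in $K$, so some good $\p$ lies over one of them; its inertia degree over $\F_p(t)$ is at most $D$, hence $\#k(\p) \leqslant p^D$, and part (i) gives the conclusion.
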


\begin{proof}By Theorem \ref{T1} the first statement is immediate from Lemma \ref{L1new}. For the second just note that
there is at least one prime $\p$ in $K$ which is of good reduction for $\phi$ and lies over a prime of degree one of $\F_p(t)$. Since its inertia degree is at most $D$, its residue field $k(\mathfrak{p})$ has cardinality at most $p^D$. 
\end{proof}


We conclude this section with two examples.
First we go back to the issue of optimality already treated in Example \ref{ExOptCycLenght0}.

\begin{example}\label{ExOptCycLenght}
	For a fixed $q$, let $\F_q=\{0, 1, w_i : i=1, \dotsc , q-2\}$ and take a monic polynomial $\psi \in \F_q[X]$ of degree $2q-2 \geqslant q$ with a unique cycle of the form 
	\begin{equation*}
	0 \mapsto 1 \mapsto w_1 \mapsto \cdots \mapsto w_{q-2} \mapsto 0.
	\end{equation*}
	In Example \ref{ExOptCycLenght0} we proved that such a polynomial exists.
	Now define \[\phi(X)=\dfrac{ \psi(X)}{X^{2q-2}} \, ,\]
	then $0$ has the following periodic orbit of length $q + 1$:
	\begin{equation*}
	0 \mapsto \infty \mapsto 1 \mapsto w_1 \mapsto \cdots \mapsto w_{q-2} \mapsto 0. 
	\end{equation*}
\end{example}

It is perhaps natural to ask if Theorem \ref{T1} admits a corresponding result for number fields (in particular since we assume $\# S=1$ and $S$ should contain all archimedean places, we are only considering imaginary quadratic fields and $\Q$). More precisely: if $K$ is $\Q$ or an imaginary quadratic field and $\phi$ a rational map over $K$ with good reduction at every finite prime, is it true that the points of a cycle are all equidistant from each other with respect to the $\p$-adic logarithmic distance for every finite prime $\p$? The answer is no and we can give a fairly simple counterexample.
\begin{example}\label{Enfield}
	Let $K= \Q(i)$ and let $S$ contain the unique archimedean place of $K$. Then $\mathcal{O}_S= \Z[i]$ and $\mathcal{O}^*_S=\{\pm 1, \pm i \}$. Consider the polynomial $\phi(X)= i X$, then $\phi$ has good reduction at every finite prime and a cycle of length $4$
	\begin{equation*}
	1 \mapsto i \mapsto -1 \mapsto -i \mapsto 1.
	\end{equation*}
	Since $4>2+1$ we expect Theorem \ref{T1} to fail for the prime lying over $2$, that is $\p= (1 + i)$. Indeed $\delta_{\p}(1, i)= v_{\p}(1-i)= 1$, while $\delta_{\p}(1, -1)= v_{\p}(2)= 2$.
\end{example}

\section{Finite orbits}\label{Sfinite}

We now deal with bounds of the type of Corollary \ref{Cbound0} for the cardinality of finite orbits. 
We begin with two lemmas about rational maps defined over valued fields. The first one is a result that relates the dynamical behavior of a map with good reduction to metric properties, very much like Propositions \ref{P2} and \ref{P3}. 

\begin{lemma}\label{Lpreper}\textup{\cite[Lemma 4.1]{CP16}}
	Let $K$ be a valued field and $\phi \colon \P^1 \rightarrow \P^1$ a rational map defined over $K$ with good reduction at $\p$. Let $\mathcal A \subseteq \P^1(K)$ be a finite orbit containing a fixed point $P_0$ and write 
	\begin{equation}\label{eq:finite}
	P_{-m+1} \mapsto P_{-m+2} \mapsto \cdots \mapsto P_{-1} \mapsto P_0\mapsto P_0.
	\end{equation}
    Then for every $a, b$ such that $1 \leqslant a < b \leqslant m-1$
	\begin{equation*}
	\delta_{\p}(P_{-b}, P_{-a}) = \delta_{\p}(P_{-b} , P_0) \leqslant \delta_{\p}(P_{-a}, P_0).
	\end{equation*}
\end{lemma}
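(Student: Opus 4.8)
The plan is to reduce the statement to relations among the quantities $d_i := \delta_{\p}(P_{-i}, P_0)$ (for $0 \leqslant i \leqslant m-1$) and $e_{a,b} := \delta_{\p}(P_{-a}, P_{-b})$ (for $1 \leqslant a < b \leqslant m-1$), using only the triangle inequality (Proposition \ref{P1}) and the non-contraction property (Proposition \ref{P2}). The key combinatorial observation is that, since $\phi$ fixes $P_0$ and sends $P_{-i} \mapsto P_{-i+1}$, the iterate $\phi^{k}$ sends $P_{-i}$ to $P_{-(i-k)}$ when $k \leqslant i$ and to $P_0$ when $k \geqslant i$. In this notation Lemma \ref{Lpreper} asserts that $d_b \leqslant d_a$ and $e_{a,b} = d_b$ whenever $a < b$.

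The inequality $d_b \leqslant d_a$ is immediate: applying Proposition \ref{P2} to $\phi^{b-a}$, which carries $P_{-b}$ to $P_{-a}$ and fixes $P_0$, gives $d_a = \delta_{\p}(\phi^{b-a}(P_{-b}), \phi^{b-a}(P_0)) \geqslant \delta_{\p}(P_{-b}, P_0) = d_b$. One half of the equality is equally cheap: Proposition \ref{P1} applied to the triple $(P_{-a}, P_0, P_{-b})$ yields $e_{a,b} \geqslant \min\{d_a, d_b\} = d_b$. Everything therefore comes down to the reverse bound $e_{a,b} \leqslant d_b$; and once one knows $e_{a,b} \leqslant d_a$, the triangle inequality $d_b \geqslant \min\{e_{a,b}, d_a\}$ forces $e_{a,b} \leqslant d_b$, hence $e_{a,b} = d_b$. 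So the real content is the estimate $e_{a,b} \leqslant d_a$.

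To obtain $e_{a,b} \leqslant d_a$ I would again push the pair $(P_{-b}, P_{-a})$ forward by $\phi^{b-a}$, which sends $P_{-b} \mapsto P_{-a}$. If $b \geqslant 2a$ then $b-a \geqslant a$, so $\phi^{b-a}$ sends $P_{-a}$ all the way to $P_0$ and Proposition \ref{P2} gives $d_a = \delta_{\p}(\phi^{b-a}(P_{-b}), \phi^{b-a}(P_{-a})) \geqslant e_{a,b}$ directly; this also disposes of the base case $b = 2$. If instead $a < b < 2a$, then $\phi^{b-a}$ carries $P_{-a}$ only as far as $P_{-(2a-b)}$ with $0 < 2a-b < a$, giving $e_{2a-b,\,a} \geqslant e_{a,b}$; one then invokes the case of Lemma \ref{Lpreper} already known for the pair $(2a-b,\,a)$, whose larger index $a$ is strictly less than $b$, to get $e_{2a-b,\,a} = d_a$ and hence $e_{a,b} \leqslant d_a$. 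Thus the equality should be established by strong induction on $b$, the inductive step being the subtractive move $(a,b) \mapsto (2a-b,\,a)$ on the pair of indices. This Euclidean-style descent is the only genuinely delicate point; everything else amounts to a single application of Proposition \ref{P1} or Proposition \ref{P2}, and nothing extra is needed since the whole argument lives inside the fixed valued field $K$ and uses only the formal properties of $\delta_{\p}$.
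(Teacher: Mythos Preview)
Your argument is correct. The monotonicity $d_b\leqslant d_a$ and the lower bound $e_{a,b}\geqslant d_b$ are immediate from Propositions~\ref{P2} and~\ref{P1} respectively, and your reduction of the remaining inequality $e_{a,b}\leqslant d_b$ to $e_{a,b}\leqslant d_a$ via the ultrametric inequality is valid. The induction on the larger index $b$ works: in the subtractive step you pass from $(a,b)$ to $(2a-b,a)$, whose larger index is $a<b$, and the base case $b=2$ (hence $a=1$, $b=2a$) falls under your direct case.

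As for comparison: the paper does not supply its own proof of Lemma~\ref{Lpreper} but simply cites \cite[Lemma~4.1]{CP16}, so there is nothing in the present text to compare against. Your Euclidean-style descent is in the spirit of the Morton--Silverman argument behind Proposition~\ref{P3}(ii) (cf.\ \cite[Proposition~6.1]{MS95}), which is the natural approach here and is essentially what the cited reference does as well.
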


A finite orbit $\mathcal A$ for a map $\phi$ consists of a periodic part and a \textit{tail} part (i.e. all the points in $\mathcal A$ that are not periodic) denoted by $\Tail (\mathcal A)$. Assume the periodic part is a cycle of length $n$ and consider the behavior of the map $\phi^n$ over $\mathcal A$. Then all periodic points in the cycle become fixed points with respect to $\phi^n$ and for every tail point $P$ there exists a unique periodic point $Q \in \mathcal A$ satisfying $\phi^{nk}(P) = Q$ for some positive integer $k$. In other words, we may say that $Q$ is the unique periodic point in the forward orbit of $P$ with respect to the map $\phi^n$.

\begin{lemma}\label{LTroncoso}\textup{\cite[Corollary 2.23]{Tro17} }
	Let $K$ be a valued field and $\phi \colon \P^1 \rightarrow \P^1$ a rational map defined over $K$ with good reduction at $\p$. Let $P$ be a tail point for $\phi$ and $n$ the length of the periodic part of its orbit. If $Q$ is any periodic point satisfying $\widetilde{Q} = \widetilde{P}$, then $Q$ is the periodic point in the forward orbit of $P$ with respect to $\phi^n$.
\end{lemma}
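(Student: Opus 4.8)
The plan is to show that $Q$ lies in the forward orbit of $P$ under $\phi^n$ by tracking how the tail collapses onto the cycle when we reduce modulo $\p$. Write the finite orbit of $P$ as a tail $P = P_{-m+1} \mapsto \cdots \mapsto P_{-1} \mapsto P_0$ landing on a cycle $P_0 \mapsto P_1 \mapsto \cdots \mapsto P_{n-1} \mapsto P_0$ of length $n$. The unique periodic point in the $\phi^n$-forward orbit of $P$ is the point $P_j$ on the cycle such that $m-1 \equiv -j \pmod n$ (equivalently, the point obtained by iterating $\phi$ enough times from $P$ and then restricting to multiples of $n$). So it suffices to prove that $\widetilde{Q} = \widetilde{P}$ forces $Q = P_j$ for that specific $j$.

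First I would pass to the map $\psi = \phi^n$, which still has good reduction at $\p$ by the composition property of good reduction recalled after Definition \ref{def_goodred}. Under $\psi$, each $P_i$ on the cycle is a fixed point, and $P$ has a $\psi$-orbit of the form $P = P_{-m+1} \mapsto P_{-m+1+n} \mapsto \cdots \mapsto P_j$ (a tail landing on the fixed point $P_j$). Now apply Lemma \ref{Lpreper} to $\psi$ with the orbit of $P$: it tells us that along this tail the distances to the terminal fixed point $P_j$ are \emph{nondecreasing} as we move forward, and in particular that consecutive tail points are strictly closer to $P_j$ at each step unless they have already merged. Since reduction commutes with $\psi$ (good reduction), $\widetilde{P}$ reduces under $\widetilde{\psi}$ eventually to $\widetilde{P_j}$, so $\widetilde{P}$ lies in the basin of the fixed point $\widetilde{P_j}$ of $\widetilde{\psi}$.

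Next I would use the hypothesis $\widetilde{Q} = \widetilde{P}$ together with the fact that $Q$ is periodic. Since $Q$ is a periodic point for $\phi$ and $\widetilde{Q} = \widetilde{P}$, and $P$ has periodic part of length exactly $n$, the reduction $\widetilde{Q}$ lies in the same $\widetilde{\psi}$-basin, so $\widetilde{Q}$ reduces under $\widetilde{\psi}$ to $\widetilde{P_j}$; but $Q$ is already $\psi$-periodic (indeed, the period of $Q$ divides $n$... here one must be slightly careful, but a periodic point whose reduction equals that of a tail point whose cycle has length $n$ must in fact be a $\psi$-fixed point, since its reduction is forced to be the unique $\widetilde\psi$-fixed point $\widetilde{P_j}$ in that basin and $\psi$ preserves $\delta_\p$ on the $\psi$-orbit of $Q$ by Proposition \ref{P3}). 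Hence $\widetilde{Q} = \widetilde{P_j}$, i.e.\ $\delta_\p(Q, P_j) > 0$. But $Q$ and $P_j$ are both periodic points of $\phi$, and by Theorem \ref{T1} (or directly by Proposition \ref{P3} applied to the combined periodic structure) two distinct periodic points in a common finite orbit lying in the same residue class would contradict the metric rigidity of periodic orbits — more precisely, $Q$ and $P_j$ being distinct periodic points with $\widetilde Q = \widetilde{P_j}$ would make them non-separated, yet a periodic point and the fixed point $P_j$ of $\psi$ both reducing to $\widetilde{P_j}$ must coincide because $\delta_\p$ is $\psi$-invariant on the orbit and the tail/fixed-point structure of Lemma \ref{Lpreper} leaves no room for a second periodic point at positive distance zero from $P_j$. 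Therefore $Q = P_j$, which is exactly the periodic point in the forward orbit of $P$ with respect to $\phi^n$.

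The main obstacle I anticipate is the middle step: deducing from $\widetilde{Q} = \widetilde{P}$ and periodicity of $Q$ that $Q$ is actually a $\psi$-\emph{fixed} point (not just $\psi$-periodic of some period dividing $n$), and then that it equals $P_j$ rather than merely reducing to the same point. The clean way to handle this is: a periodic point $Q$ of $\phi$ has a well-defined periodic part of its own orbit, and $\widetilde Q = \widetilde P$ forces the reduced orbits to eventually coincide; since $\widetilde\psi$ has $\widetilde{P_j}$ as its unique fixed point in the relevant basin and $Q$'s reduced orbit is itself a single point (being periodic), that point must be $\widetilde{P_j}$, so $\delta_\p(Q,P_j)>0$; finally, invoking Lemma \ref{Lpreper} for $\psi$ on the orbit containing both $Q$ and the tail of $P$ (or Proposition \ref{P3} together with the impossibility of two distinct $\delta_\p$-coincident periodic points), we conclude $Q = P_j$. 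Making this last uniqueness argument airtight — ruling out a distinct periodic point sitting at $\delta_\p$-distance making it indistinguishable from $P_j$ modulo $\p$ — is the delicate point, and it is precisely where the good reduction hypothesis and the non-contracting property of $\phi$ are indispensable.
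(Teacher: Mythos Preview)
The paper does not prove this lemma at all; it is quoted from Troncoso \cite[Corollary~2.23]{Tro17} and used as a black box. So there is no in-paper argument to compare against, and I can only assess your sketch on its own merits.

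Your argument has two genuine gaps. First, you invoke Theorem~\ref{T1}. That theorem is a \emph{global} statement about maps over a global function field with exactly one bad prime, obtained by assembling information from all primes $\p \neq \infty$ simultaneously. Lemma~\ref{LTroncoso}, by contrast, is purely local: it concerns a single valued field $(K,v_\p)$ and a map with good reduction at that one prime. Theorem~\ref{T1} is simply not available in this generality, and appealing to it here would also be architecturally backwards in the paper (Lemma~\ref{LTroncoso} feeds into Lemma~\ref{Lfinite}, which is proved independently of the chain leading to Theorem~\ref{T1}).

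Second, and more fundamentally, the step you yourself flag as ``delicate'' does not go through. You try to pass from $\widetilde Q = \widetilde{P_j}$ to $Q = P_j$ by arguing that two distinct periodic points cannot share a reduction. But they can: whenever the reduced cycle has length $m < n$ (which is exactly the situation of Remark~\ref{RLenRedCyc} and of Case~1 in Lemma~\ref{Lfinite}), \emph{several} of the $P_i$ collapse to the same $\widetilde{P_i}$. So ``metric rigidity of periodic orbits'' in the sense you describe is false, and neither Lemma~\ref{Lpreper} nor Proposition~\ref{P3} gives you a way to single out $P_j$ among the periodic points with that common reduction. What one actually needs is a direct comparison of the distances $\delta_\p(P, P_i)$ as $P_i$ ranges over the cycle, showing that $\delta_\p(P, P_j)$ is \emph{strictly} larger than $\delta_\p(P, P_i)$ for every $i$ with $P_i \neq P_j$; your use of Lemma~\ref{Lpreper} only tracks distances along a single $\psi$-orbit and yields non-strict inequalities, which is not enough to separate $P_j$ from the other $P_i$. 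That strict comparison is the substance of Troncoso's argument and is missing from your sketch.
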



The next result is of a different nature: in this case we consider points with coordinates in a global field $K$. Let $S$ be a finite set of primes of $K$; by putting together local information about $\p$-adic distances for all primes outside $S$ we establish a connection between points with certain properties and solutions of unit equations. 
The \textit{$n$-points lemmas} are central tools in the work of Canci, Troncoso and Vishkautsan over number fields (see \cite{CTV19} and \cite{CV19}) where they are applied in combination with bounds for the number of solutions of the resulting unit equations. The three points lemma we present here is a qualitative version of \cite[Corollary 4.2]{CTV19}; its proof can be adapted from that of \cite[Lemma 4.1]{CTV19} but we include it here for the reader's convenience.

\begin{lemma}[Three points lemma]\label{L3pts}
Let $K$ be a global function field and $S$ a finite set of primes of $K$ containing $\infty$. Fix an $S$-radical choice of coordinates for $\P^1(K)$. Let $Q_1, Q_2, Q_3$ be distinct points in $\P^1(K)$, with $Q_i = [x_i : y_i]$. Then there exists an injection from the set 
\[\mathcal{P} = \{P \in \P^1(K) \text{ such that } \delta_{\p} (P, Q_1) = \delta_{\p} (P, Q_2) = \delta_{\p} (P, Q_3) \text{ for all } \p \notin S \}\]
to the set of solutions $(u, w) \in \sqrt{\mathcal O_S^*} \times \sqrt{\mathcal O_S^*} $ of the unit equation 
\[ \dfrac{x_3 y_2 - x_2 y_3}{x_1 y_2 - x_2 y_1} \, u \,+ \, \dfrac{x_1 y_3 - x_3 y_1}{x_1 y_2 - x_2 y_1} \, w = 1 .\]
\end{lemma}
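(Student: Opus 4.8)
The plan is to write down an explicit map $\Phi$ from $\mathcal P$ to the solution set and to check that it is well defined and injective; the only real input is the Grassmann (Pl\"ucker) identity for $2\times 2$ determinants, which will be used twice. Throughout I would work with the fixed $S$-radical choice of coordinates for $\P^1(K)$ supplied just before the statement, so that $\delta_\p(P,Q)=v_\p(x_Py_Q-x_Qy_P)$ for every $\p\notin S$ by \eqref{eq:dist}, and abbreviate $[P,Q]:=x_Py_Q-x_Qy_P\in\sqrt{K^*}$. First I would record that every $P\in\mathcal P$ is distinct from $Q_1,Q_2,Q_3$: if $P=Q_i$ then $\delta_\p(P,Q_i)=+\infty$ for all $\p$, which by the defining equalities forces $\delta_\p(P,Q_j)=+\infty$, hence $P=Q_j$, for the other two indices, contradicting that $Q_1,Q_2,Q_3$ are distinct. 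Thus $[P,Q_i]\neq 0$ for $i=1,2,3$ and one may set
\[ \Phi(P)\;=\;\left(\frac{[P,Q_1]}{[P,Q_3]},\ \frac{[P,Q_2]}{[P,Q_3]}\right)\;=:\;(u_P,w_P). \]

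Next I would check that $\Phi$ takes values in $\sqrt{\mathcal O_S^*}\times\sqrt{\mathcal O_S^*}$ and lands in the solution set of the unit equation. For the first point, $u_P$ is a ratio of elements of $\sqrt{K^*}$, hence lies in $\sqrt{K^*}$, and for every $\p\notin S$ one has $v_\p(u_P)=\delta_\p(P,Q_1)-\delta_\p(P,Q_3)=0$ by the definition of $\mathcal P$; raising $u_P$ to a power that brings it into $K^*$ then shows $u_P\in\sqrt{\mathcal O_S^*}$, and likewise for $w_P$ (using $\sqrt{\mathcal O_S^*}=\mathcal O_{\mathbb S}^*\cap\sqrt{K^*}$ from Section \ref{preliminaries}). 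For the second point, the Grassmann relation
\[ [P,Q_1]\,[Q_2,Q_3]+[P,Q_2]\,[Q_3,Q_1]+[P,Q_3]\,[Q_1,Q_2]=0, \]
valid for any four vectors in a two-dimensional space, divided through by $[P,Q_3]\,[Q_1,Q_2]$ and rewritten using $x_3y_2-x_2y_3=-[Q_2,Q_3]$, $x_1y_3-x_3y_1=-[Q_3,Q_1]$, $x_1y_2-x_2y_1=[Q_1,Q_2]$, becomes exactly the stated unit equation evaluated at $(u,w)=(u_P,w_P)$.

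Finally, for injectivity I would observe that already the first coordinate of $\Phi$ pins down $P$. If $u_P=u_{P'}$ then $[P,Q_1]\,[P',Q_3]=[P,Q_3]\,[P',Q_1]$, and a direct expansion (another instance of the same determinant identity, applied to the vectors $P,Q_1,P',Q_3$) gives $[P,Q_1]\,[P',Q_3]-[P,Q_3]\,[P',Q_1]=[P,P']\,[Q_1,Q_3]$; since $Q_1\neq Q_3$ the factor $[Q_1,Q_3]$ is nonzero, forcing $[P,P']=0$, i.e. $P=P'$ in $\P^1(K)$. Hence $\Phi$ is injective, which is the assertion. I do not expect a genuine obstacle here: the entire argument is the Grassmann identity together with the valuation bookkeeping already set up in Section \ref{preliminaries}; the only point requiring a little care is checking that the auxiliary quantities $[P,Q]\in\sqrt{K^*}$ really obey these bilinear identities in $L$ and that the passage between $v_\p$ on $K$ and on $\sqrt{K^*}$ is harmless.
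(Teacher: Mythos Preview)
Your proof is correct and is essentially the paper's argument: the map $P\mapsto(u,w)$ you write down is exactly the one the paper constructs via equations \eqref{eq:u}--\eqref{eq:w}, your Pl\"ucker identity is the expansion of the vanishing determinant in \eqref{eq:L3pts1}--\eqref{eq:L3pts2}, and your injectivity step (that $u$ alone recovers $P$) is the paper's observation that the first equation in \eqref{eq:L3pts1} is a non-degenerate linear form. The only difference is packaging---you invoke the Grassmann relation explicitly where the paper speaks of a linear system with vanishing determinant---and your handling of the degenerate case $P=Q_i$ is slightly more explicit.
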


\begin{proof}
Take a generic point of the projective line $P\in \P^1(K)$ and write $P = [x : y]$, then the following conditions are equivalent:
\begin{align}
\delta_{\p} (P, Q_1) = \delta_{\p} (P, Q_3) & \quad \text{for all $\p \notin S$} \nonumber \\ 
v_{\p} (x_1 y - y_1 x)= v_{\p} (x_3 y - y_3 x) & \quad \text{for all $\p \notin \mathbb S$} \nonumber \\ 
x_1 y - y_1 x = u(x_3 y - y_3 x) & \quad \text{for some $u \in \sqrt{\mathcal O_S^*}$.} \label{eq:u}
\end{align}
Similarly, by taking $Q_2$ in place of $Q_1$:
\begin{equation} \label{eq:w}
 x_2 y - y_2 x = w(x_3 y - y_3 x) \quad \text{for some } w \in \sqrt{\mathcal O_S^*}.
\end{equation}
Thus $P\in \mathcal{P}$ if and only if there exists $(u,w) \in \sqrt{\mathcal{O}_S^*} \times \sqrt{\mathcal{O}_S^*}$ such that the linear system
	\begin{equation}\label{eq:L3pts1}
	\begin{cases}
		(u y_3 - y_1) X	+ (x_1 - u x_3) Y = 0\\
		(w y_3 - y_2) X + (x_2 - w x_3) Y = 0
	\end{cases}	
	\end{equation}
    has the nontrivial solution $(x, y)$. Then the determinant vanishes, that is
\begin{equation}\label{eq:L3pts2}
\dfrac{x_3 y_2 - x_2 y_3}{x_1 y_2 - x_2 y_1} \, u \,+ \, \dfrac{x_1 y_3 - x_3 y_1}{x_1 y_2 - x_2 y_1} \, w = 1\,.
\end{equation}
Conversely, fix $(u, w) \in \sqrt{\mathcal O_S^*} \times \sqrt{\mathcal O_S^*} $ a solution of \eqref{eq:L3pts2}, then the first equation in \eqref{eq:L3pts1} is a non-degenerate linear form (otherwise $Q_1= Q_3$) and therefore it defines a unique point in $\P^1(L)$, but not necessarily in $\P^1(K)$. What really matters is that the map $P \mapsto (u, w) $ admits a left inverse, i.e. it is injective.
\end{proof}


From now on we go back to the case of a rational map $\phi\colon\P^1\rightarrow\P^1$ defined over a global function field $K$ and with good reduction outside the set $S=\{\infty\}$. Recall that in these hypotheses $\sqrt{\mathcal{O}_S^*}=\F^*$ for some finite field $\F$. The two following lemmas describe the behavior of tail points with respect to the $\p$-adic distance and reduction modulo $\p$ for all primes of good reduction.
Both proofs rely on Lemma \ref{L3pts}: we shall produce unit equations with no solutions to prove that certain points cannot exist.

\begin{lemma}\label{Lfixed}
Let $\mathcal A \subseteq \P^1(K)$ be a finite orbit for $\phi$ with a fixed point as in \eqref{eq:finite}. Then
\begin{enumerate}
\item for every prime of good reduction $\p$ the quantity $\delta_{\p}(P, Q) $ with $(P, Q)$ varying over the pairs of distinct points in $\mathcal A - \{ P_{-m + 1} \}$ is constant;
\item $\# \mathcal A \leqslant \# k(\p) + 2$.
\end{enumerate}
\end{lemma}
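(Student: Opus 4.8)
The plan is to deduce (ii) at once from (i) together with Lemma \ref{L1new}, so the substance is part (i). I would set $S = \{\infty\}$ and fix an $S$-radical choice of coordinates for $\P^1(K)$, so that $\delta_\p(P,Q) = v_\p(x_P y_Q - x_Q y_P)$ for every prime $\p$ of good reduction, and $x_P y_Q - x_Q y_P \in \sqrt{K^*}$ whenever $P \ne Q$. Writing $\mathcal A = \{P_{-m+1}, \dots, P_{-1}, P_0\}$ as in \eqref{eq:finite}, the set $\mathcal A \setminus \{P_{-m+1}\} = \{P_{-m+2}, \dots, P_0\}$ has $m - 1$ points; (i) is trivial for $m \le 3$, so I would assume $m \ge 4$.

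First I would unpack Lemma \ref{Lpreper}: setting $d_i(\p) = \delta_\p(P_{-i}, P_0)$, one gets, for every good prime $\p$ and all $1 \le a < b \le m-1$, the equality $\delta_\p(P_{-b}, P_{-a}) = \delta_\p(P_{-b}, P_0) = d_b(\p)$ together with $d_b(\p) \le d_a(\p)$. Hence every $\p$-adic distance between distinct points of $\mathcal A \setminus \{P_{-m+1}\}$ equals one of $d_1(\p), \dots, d_{m-2}(\p)$, and these form a non-increasing sequence; so it is enough to prove $d_1(\p) = \dots = d_{m-2}(\p)$ for each good prime $\p$.

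The heart of the argument is a contradiction through the three points lemma. Assume $d_a(\p_0) > d_b(\p_0)$ for some good prime $\p_0$ and some $1 \le a < b \le m-2$. I would apply Lemma \ref{L3pts} to the (distinct) points $Q_1 = P_{-b}$, $Q_2 = P_0$, $Q_3 = P_{-a}$. The coefficients $\alpha, \beta$ of the resulting unit equation $\alpha u + \beta w = 1$ lie in $\sqrt{K^*}$, and for every good prime $\p$ one computes $v_\p(\beta) = \delta_\p(P_{-b}, P_{-a}) - \delta_\p(P_{-b}, P_0) = 0$ (by Lemma \ref{Lpreper}) and $v_\p(\alpha) = \delta_\p(P_0, P_{-a}) - \delta_\p(P_{-b}, P_0) = d_a(\p) - d_b(\p) \ge 0$. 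Thus $\beta$ is an $\mathbb S$-unit in $\sqrt{K^*}$, i.e.\ $\beta \in \sqrt{\mathcal O_S^*} = \F^*$, whereas $v_{\p_0}(\alpha) > 0$ together with $\alpha \ne 0$ forces $\alpha \notin \F$. But a solution with $u, w \in \sqrt{\mathcal O_S^*} = \F^*$ would give $\alpha u = 1 - \beta w \in \F$ and hence $\alpha \in \F$, a contradiction; so the unit equation has no solution and the set $\mathcal P$ of Lemma \ref{L3pts} is empty. On the other hand, since $b \le m-2$ the point $P_{-(b+1)} \in \mathcal A \subseteq \P^1(K)$ is defined, and Lemma \ref{Lpreper} applied to the pairs $(a, b+1)$ and $(b, b+1)$ yields $\delta_\p(P_{-(b+1)}, P_{-b}) = \delta_\p(P_{-(b+1)}, P_0) = \delta_\p(P_{-(b+1)}, P_{-a})$ for all $\p \notin S$, so $P_{-(b+1)} \in \mathcal P$ — a contradiction. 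This proves (i); then (ii) follows since the $m-1$ points of $\mathcal A \setminus \{P_{-m+1}\}$ are pairwise at constant $\p$-adic distance, so Lemma \ref{L1new} gives $m - 1 \le \#k(\p) + 1$.

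I expect the main obstacle to be the choice of the triple $(Q_1, Q_2, Q_3)$ and of the witness point for the three points lemma. The point is to place the fixed point $P_0$ in the middle slot $Q_2$ and two tail points at $Q_1, Q_3$: Lemma \ref{Lpreper} then pins one coefficient of the unit equation down to an $\mathbb S$-unit, hence to a constant of $\F^*$, while the hypothesized failure of constancy turns the other coefficient into a non-constant element of $\sqrt{K^*}$, which is exactly the obstruction to solvability over the finite field $\F$; recognising that the very next tail point $P_{-(b+1)}$ then lies in the (now empty) solution set closes the loop. A secondary point to get right is the bookkeeping on indices, so that $b+1 \le m-1$ and this witness point actually belongs to the orbit.
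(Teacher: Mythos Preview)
Your proof is correct and follows essentially the same route as the paper: reduce (ii) to (i) via Lemma~\ref{L1new}, use Lemma~\ref{Lpreper} to reduce (i) to the constancy of the sequence $d_i(\p)=\delta_\p(P_{-i},P_0)$, and then derive a contradiction from the three points lemma by forcing one coefficient of the unit equation into $\F^*$ and the other out of $\F$. The only differences are cosmetic: the paper takes the \emph{minimal} index $a$ with $d_a\neq d_1$, uses the triple $(Q_1,Q_2,Q_3)=(P_{-1},P_{-a},P_0)$, and chooses the endpoint $P_{-m+1}$ as the witness in $\mathcal P$; you instead take an arbitrary pair $a<b\le m-2$ with $d_a(\p_0)>d_b(\p_0)$, the triple $(P_{-b},P_0,P_{-a})$, and the immediate predecessor $P_{-(b+1)}$ as witness --- avoiding the minimality bookkeeping at no cost.
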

\begin{proof}
The statement in (ii) readily follows from (i) applying Lemma \ref{L1new}. By Lemma \ref{Lpreper}, to prove (i) it suffices to show that for every prime of good reduction
\[ \delta_{\p}(P_{-i}, P_0)= \delta_{\p}(P_{-1}, P_0) \quad \text{for } i=2, \dotsc , m-2. \]
Let $a$ be the smallest positive integer satisfying $\delta_{\q} (P_{-a}, P_0) \neq \delta_{\q}(P_{-1}, P_0)$ for some good prime $\q$ (then of course $a > 1$). The thesis is satisfied if either $a$ does not exists or $a$ equals $m - 1$. Assume the contrary and put $Q_1 = P_{-1}, Q_2 = P_{-a}$ and $Q_3 = P_0$. Now take $P = P_{-m + 1}$, then by Lemma \ref{Lpreper} for every prime of good reduction $\p$ 
\[ \delta_{\p} (P, Q_1) = \delta_{\p} (P, Q_2) = \delta_{\p} (P, Q_3) \]
and we can apply Lemma \ref{L3pts}. Fix an $S$-radical choice of coordinates for $\P^1(K)$ (see Section \ref{preliminaries}) and let $Q_i = [x_i : y_i] $ for $i = 1, 2, 3$. Then the point $P$ corresponds to a solution $(u, w) \in \F^* \times \F^*$ of the equation $A u + B w= 1 $ with nonzero coefficients defined by 
 \begin{equation}\label{eq:coeff}
        A=\dfrac{x_3 y_2 - x_2 y_3}{x_1 y_2 - x_2 y_1} \quad \text{and} \quad B= \dfrac{x_1 y_3 - x_3 y_1}{x_1 y_2 - x_2 y_1}.
\end{equation}
 Now, $A$ and $B$ belong to $\sqrt{K^*}$ and for every good prime $\p$
    \begin{equation}\label{eq:val}
       \begin{aligned}
        v_{\p} (A) &= \delta_{\p} (Q_2, Q_3) - \delta_{\p} (Q_1, Q_2),\\
        v_{\p} (B) &= \delta_{\p} (Q_1, Q_3) - \delta_{\p} (Q_1, Q_2).
    \end{aligned} 
    \end{equation}
By Lemma \ref{Lpreper} for every prime of good reduction $\p$
\[\delta_{\p} (Q_2, Q_3) =\delta_{\p}(P_{-a}, P_0) = \delta_{\p}(P_{-a}, P_{-1})= \delta_{\p}(Q_2, Q_1)\,,\]
thus $A \in \F^*$. Furthermore, by the assumption on $a$, there exists a prime of good reduction $\q$ such that
\[ \delta_{\q} (Q_1, Q_2) = \delta_{\q} (P_{-a}, P_{-1}) = \delta_{\q} (P_{-a}, P_0) \neq \delta_{\q} (P_{-1}, P_0) = \delta_{\q} (Q_1, Q_3) \,.\]
Then $v_{\q} (B) \neq 0 $ and in particular $B \notin \F^*$. On the other hand, $B w = 1 - A u $ is not zero and belongs to $\F$, a contradiction.
\end{proof}

\begin{lemma}\label{Lfinite}
Let $\mathcal A \subseteq \P^1(K)$ be a finite orbit with periodic part of period $n \geqslant 4$. Then distinct points in the tail of $\mathcal A$ reduce to distinct points for every prime of good reduction.
\end{lemma}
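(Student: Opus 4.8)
I want to show that if $P$ and $P'$ are distinct tail points of $\mathcal A$ with $\widetilde P = \widetilde{P'}$ for some prime of good reduction $\q$, then we reach a contradiction. The basic idea is to use Lemma \ref{LTroncoso}: since both $P$ and $P'$ are tail points with periodic part of length $n$, and their orbits under $\phi^n$ eventually hit a \emph{unique} periodic point, the congruence $\widetilde P = \widetilde{P'}$ forces $P$ and $P'$ to land (under $\phi^n$) on the same periodic point $Q$ of the cycle. So I may assume both $P$ and $P'$ flow into the same fixed point $Q$ of $\phi^n$, which (after renaming and applying Lemma \ref{LTroncoso} once more with $Q$ in place of a general periodic point) means $\widetilde Q = \widetilde P = \widetilde{P'}$.

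**Setting up the unit equation.** Now I switch viewpoints to the map $\psi = \phi^n$, which still has good reduction outside $S$, and to the finite $\psi$-orbit obtained by taking the backward $\psi$-orbit from $Q$ through $P'$ and then $P$ (arranging that $P$ is the deepest tail point, i.e.\ one more step away from $Q$ than $P'$ along a $\psi$-chain — if $P$ and $P'$ are incomparable in the tree I can still pick a common descendant chain, but I expect the clean case to be when one is the $\psi$-image of a backward iterate of the other, or I pass to a further iterate to linearize the tree). Then Lemma \ref{Lpreper}, applied to $\psi$ and the fixed point $Q$, gives for every prime of good reduction $\p$ that $\delta_\p(P, P') = \delta_\p(P, Q)$ together with inequalities between $\delta_\p(P',Q)$ and $\delta_\p(P,Q)$. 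I then set $Q_1 = P'$, $Q_2 = $ an intermediate tail point strictly between $P'$ and $Q$ along the chain (or $Q$ itself, handled separately), $Q_3 = Q$, and apply the Three points lemma (Lemma \ref{L3pts}) with $P$ as the input point: the equalities of distances furnished by Lemma \ref{Lpreper} guarantee $P \in \mathcal P$, so $P$ produces a solution $(u,w)\in \F^*\times\F^*$ of a unit equation $Au + Bw = 1$ with coefficients $A, B$ as in \eqref{eq:coeff}.

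**Extracting the contradiction.** The point of the hypothesis $n \geqslant 4$ is exactly that it should let me mimic the argument of Lemma \ref{Lfixed}: one of the coefficients, say $A$, turns out to lie in $\F^*$ because Lemma \ref{Lpreper} (for $\psi$) forces the relevant $\p$-adic distances to coincide for \emph{every} good prime, while the other coefficient $B$ fails to be an $\F^*$-multiple of a unit at the distinguished prime $\q$ — since $\widetilde P = \widetilde{P'} = \widetilde Q$ makes $\delta_\q(P',Q) > 0$ and strictly larger (or different) from the shared value, forcing $v_\q(B) \neq 0$. Then $Bw = 1 - Au$ is a nonzero element of $\F$, contradicting $v_\q(B)\neq 0$. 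The role of $n\ge 4$ is to ensure the cycle of $\phi$ genuinely collapses modulo $\q$ in the way Remark \ref{RLenRedCyc} describes (for $n\ge 2$ a good prime either preserves the cycle length or collapses it to a single fixed point), and, combined with Lemma \ref{Lfixed}(ii) applied to the $\psi$-orbit through $Q$, to guarantee enough room in the tail that the intermediate point $Q_2$ actually exists as a distinct point.

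**Main obstacle.** The delicate part is the \emph{combinatorics of the tail tree}: a finite orbit's tail need not be a single chain, so "distinct tail points $P, P'$ reducing to the same point" must first be reduced, via Lemma \ref{LTroncoso}, to two points lying on a common backward $\psi$-chain ending at one periodic point — and I must be careful that passing to $\psi = \phi^n$ (or a further iterate to straighten the tree) does not destroy the good-reduction hypotheses (it does not, since compositions of good-reduction maps have good reduction) nor the availability of an $S$-radical choice of coordinates. Once the two points are on a common chain through a single fixed point of $\psi$, the argument is a faithful copy of the proof of Lemma \ref{Lfixed}, so the real work — and the only place I expect genuine friction — is this initial normalization of the tail configuration and the bookkeeping that shows $n \geqslant 4$ is exactly what is needed for it.
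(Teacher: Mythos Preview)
Your proposal has a genuine gap: the Three Points configuration you set up cannot produce a contradiction from the hypothesis $\widetilde P=\widetilde{P'}$ alone. With $Q_1=P'$, $Q_2$ an intermediate $\psi$-tail point, $Q_3=Q$, and input point $P$ (the deepest tail point), Lemma~\ref{Lpreper} indeed gives $\delta_\p(P,Q_i)$ constant in $i$ for every good $\p$, so $P\in\mathcal P$. But then $v_\p(B)=\delta_\p(P',Q)-\delta_\p(P',Q_2)=0$ for all good $\p$ (again by Lemma~\ref{Lpreper}), so $B\in\F^*$; and $v_\p(A)=\delta_\p(Q_2,Q)-\delta_\p(P',Q)$. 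To contradict $Au+Bw=1$ you need $v_\q(A)\neq 0$, i.e.\ $\delta_\q(Q_2,Q)\neq\delta_\q(P',Q)$. Your hypothesis only gives $\delta_\q(P,P')>0$, which via Lemma~\ref{Lpreper} yields the chain of \emph{inequalities} $0<\delta_\q(P,Q)\leqslant\delta_\q(P',Q)\leqslant\delta_\q(Q_2,Q)$ --- nothing forces any of these to be strict. It is perfectly possible that all these distances are equal and positive at $\q$, and then $A\in\F^*$ too and there is no contradiction. Applying Lemma~\ref{Lfixed} directly to the $\psi$-orbit fares no better: its conclusion is that distances are constant, which is entirely compatible with all points reducing to the same point modulo~$\q$.

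The paper's proof is organized around an ingredient you do not use: Remark~\ref{RLenRedCyc} (a consequence of Theorem~\ref{T1}), which says that for every good prime the reduced cycle has length $n$ or $1$. This forces a two-case split. If some good prime $\q$ collapses the cycle, one shows $\Tail(\mathcal A)$ has at most one point by taking $Q_1$ the last tail point, $Q_2=\phi(Q_1)$, $Q_3=\phi^2(Q_1)$ (both periodic), and $P$ the preimage of $Q_1$; Lemma~\ref{LTroncoso} then gives $\delta_\p(Q_1,Q_2)=\delta_\p(Q_1,Q_3)=0$ for all good $\p$ (so $B\in\F^*$), while $\delta_\q(Q_2,Q_3)>0$ (so $A\notin\F^*$). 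If instead the cycle never collapses, one takes $Q_1$ the last tail point, $Q_2=\phi(Q_1)$, $Q_3=\phi^n(Q_1)$, and $P$ any \emph{periodic} point $\neq Q_2,Q_3$; here the assumption that two tail points agree modulo $\q$ is used to force $\widetilde{Q_1}=\widetilde{Q_3}$. The hypothesis $n\geqslant 4$ is used precisely to guarantee $\phi^n(P)\neq Q_2,Q_3$ in the first case (so that Lemma~\ref{LTroncoso} applies) --- not, as you suggest, to control the collapse of the cycle or to manufacture an intermediate tail point. Finally, your worry about tree combinatorics is a red herring: a finite orbit is by definition the forward orbit of a single point, so its tail is always a single chain.
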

\begin{proof}
We need to apply Lemma \ref{L3pts}, so we fix an $S$-radical choice of coordinates for $\P^1(K)$. We distinguish two cases depending on the behavior of the periodic part of the orbit under reduction modulo $\p$. Fix a prime of good reduction and let $m$ denote the length of the reduced cycle for the map $\widetilde{\phi}$. Recall that either $m = n$ or $m = 1$ (see Remark \ref{RLenRedCyc}), then one of the following is true:
\begin{enumerate}[label=\arabic{enumi}.] 
	\item there exists a good prime such that $m = 1$;
	\item $m = n $ for all primes of good reduction.
\end{enumerate}
\begin{description}
    \item[Case 1] Let $\q$ be a prime of good reduction for which $m = 1$. We prove that in this case $\Tail(\mathcal A)$ consists of at most one point, so that the thesis is trivially satisfied. Assume that $\Tail(\mathcal A)$ is nonempty and denote by $Q_1 \in \Tail(\mathcal{A})$ the point such that $\phi(Q_1) $ is periodic. Put $Q_2 = \phi(Q_1)$ and $Q_3 = \phi^2(Q_1)$: both  are periodic points.
    Assume that there exists a second point in $\Tail(\mathcal A)$, then we can take $P$ as the unique point in $\mathcal A$ satisfying $\phi(P) = Q_1$.
\begin{center}
\begin{tikzpicture}[scale=0.9, font=\scriptsize\bf\sffamily]
]
\foreach \x in {0, 1, 2, 3}
    \fill (90*\x: 1) circle (1.5pt);
\foreach \x in {0, 1, 2} 
    \draw[<-] (90*\x +5:1) arc (90*\x +5: 90*(\x+ 1)-5 : 1);
\draw[<-](90*3 +5:1) arc (90*3 +5: 90*3 + 32 : 1);
\draw[decorate, decoration={text along path, text={...}, text align={align=center}}] (90*3 : 1.02) arc (90*3 : 90*4 : 1.02);
\draw (90*3 + 58 : 1) arc (90*3 + 58 : 90*4 -5 : 1);
\foreach \x in {-1, -2, -3}
    \fill (180 : 1) ++ (\x*1.4, 0) circle (1.5pt); 
\foreach \x in {-1, -2}
	\draw[->] (180 : 1) ++ (\x*1.4 + 0.1, 0) -- ++ (1.2, 0); 
\draw[decorate, decoration={text along path, text={...}, text align={align=center}}] (-2*1.4- 1, 0.02) -- ++ (-1.4,0);
\draw (180 : 1) ++ (-3*1.4 + 0.1, 0) -- ++ (1.4*0.25, 0);
\draw[<-] (180 : 1) ++ (-2*1.4 -0.1, 0) -- ++ (-1.4*0.25, 0);
\node at (180 : 1) [above left]{$Q_2$};
\node at  (-1.4 - 1, 0) [above left]{$Q_1$};
\node at (-1.4*2 - 1, 0) [above left]{$P_{}$};
\node at (90 : 1) [above]{$Q_3$};
\end{tikzpicture} 
\end{center}
We want to prove that for every prime of good reduction $\p$
    \begin{equation}\label{eq:3pts1}
        \delta_{\p}(P, Q_i) =0 \quad \text{for } i = 1, 2, 3.
    \end{equation}
   This is equivalent to showing that for every good prime the reduced points $\widetilde{P}$ and $\widetilde{Q}_i$ are not the same (see Remark \ref{Rdistance}).
    For $i = 1,3$ it suffices to apply Lemma \ref{LTroncoso} since $P$ is a tail point, $Q_2$ and $Q_3$ are periodic and $\phi^n(P) \neq Q_2, Q_3$ (because $n\geqslant 4$). For $i=1$ assume $\widetilde{P} = \widetilde{Q}_1$ for some good prime, then by good reduction
    \begin{equation*}
        \widetilde{P} = \widetilde{\phi(P)} = \widetilde{\phi} (\widetilde{P}),
    \end{equation*}
    meaning that $\widetilde{P}$ is a fixed point for $\widetilde{\phi}$. But then it would also be true that $\widetilde{Q}_1 = \widetilde{Q}_2$, a contradiction to Lemma \ref{LTroncoso}. 
    
    Since \eqref{eq:3pts1} is true we can apply Lemma \ref{L3pts}: let $Q_i = [x_i : y_i]$, then the point $P$ corresponds to a solution $(u, w) \in \F^* \times \F^*$ of the equation $A u + B w = 1$, with nonzero coefficients defined by \eqref{eq:coeff}.
    Now, $A$ and $B$ belong to $\sqrt{K^*}$ and for every good prime the $\p$-adic valuations of the coefficients are expressed in terms of distances between the points $Q_i$ by \eqref{eq:val}.
    Since $\phi^n(Q_1)\neq Q_2, Q_3$, Lemma \ref{LTroncoso} yields $\delta_{\p}(Q_1, Q_2) = \delta_{\p} (Q_1, Q_3) = 0$ for every $\p\neq \infty$ and thus $B \in \F^*$. On the other hand $\delta_{\q}(Q_2, Q_3) > 0 $ by the assumption on the length of the reduced cycle modulo $\q$, hence $A \notin \F^*$. But $A u = 1 - B w $ is not zero and belongs to $\F$, a contradiction.

    \item[Case 2] Assume $m = n $ for every prime of good reduction. If $\Tail(\mathcal A)$ contains at most one point, then there is nothing to prove. If not, take $Q_1 \in \Tail (\mathcal{A})$ such that $\phi(Q_1)$ is periodic. Suppose there exist a prime of good reduction $\q$ and two distinct points in $\Tail (\mathcal A)$ that reduce to the same point modulo $\q$. We claim that in this case $Q_1$ and the periodic point $\phi^n(Q_1)$ reduce to the same point modulo $\q$. Indeed, let $P_1, P_2 $ be distinct tail points satisfying $\widetilde{P}_1 = \widetilde{P}_2$ modulo $\mathfrak{q}$: without loss of generality we can assume $P_2 = \phi^a(P_1)$ for some $a > 0$. Then $\widetilde{P}_1$ is periodic for the map $\widetilde{\phi}$, but its orbit contains a cycle of length $n$ (the reduced cycle for $\widetilde{\phi}$) by the assumption $m = n$, hence $\widetilde{P}_1= \widetilde{\phi}^n (\widetilde{P}_1) $. By applying a suitable iterate of $\widetilde{\phi}$ on both sides we see that
    \begin{equation}\label{eq:Q_1}
    \widetilde{Q}_1 = \widetilde{\phi}^n(\widetilde{Q}_1) = \widetilde{\phi^n(Q_1)}. 
    \end{equation}
    
Put $Q_2 = \phi(Q_1)$ and $Q_3 = \phi^n(Q_1)$: both are periodic points. Next take $P$ a periodic point in $\mathcal A$ distinct from $Q_2$ and $Q_3$. 
    \begin{center}
\begin{tikzpicture}[scale = 0.9, font=\scriptsize\bf\sffamily]

\foreach \x in {0, 1, 2, 3}
    \fill (90*\x: 1) circle (1.5pt);
\foreach \x in {0, 1, 2}
    \draw[<-] (90*\x +5:1) arc (90*\x +5: 90*(\x+ 1)-5 : 1);
\draw[<-](90*3 +5:1) arc (90*3 +5: 90*3 + 32 : 1);
\draw[decorate, decoration={text along path, text={...}, text align={align=center}}] (90*3 : 1.02) arc (90*3 : 90*4 : 1.02);
\draw (90*3 + 58 : 1) arc (90*3 + 58 : 90*4 -5 : 1);

\foreach \x in {-1, -2, ..., -5}
    \fill (180 : 1) ++ (\x*1.4, 0) circle (1.5pt);

\foreach \x in {-1, -3}
 \draw[->] (180 : 1) ++ (\x*1.4 + 0.1, 0) -- ++ (1.2, 0); 
 
\foreach \x in {-2, -4, -5}{
    \draw[decorate, decoration={text along path, text={...}, text align={align=center}}] (180 :1) ++ (\x*1.4, -0.02) -- ++ (1.4, 0);
    \draw (180 : 1) ++ (\x*1.4 + 0.1, 0) -- ++ (1.4*0.25, 0);
    \draw[<-] (180 : 1) ++ (\x*1.4 + 1.3, 0) -- ++ (-1.4*0.25, 0);
    }
 
\node at (180 : 1) [above left]{$Q_2$};
\node at (-1.4 - 1, 0) [above left]{$Q_1$};
\node at (-1.4*3 - 1, 0) [above left]{$P_2$};
\node at (-1.4*4 - 1, 0) [above left]{$P_1$};
\node at (90 : 1) [above]{$P$}; 
\node at (270 : 1) [below]{$Q_3$};
\end{tikzpicture}
\end{center}
    We show that for every prime of good reduction $\p$ equation \eqref{eq:3pts1} holds.
    This is true for $i = 2,3$ because $P, Q_2$ and $Q_3$ are distinct periodic points and therefore reduce to distinct points modulo $\p$ for every prime of good reduction (by the hypothesis $m=n$). On the other hand $\widetilde{P} \neq \widetilde{Q}_1$ for every good prime by Lemma \ref{LTroncoso}. Then we can apply Lemma \ref{L3pts}: the point $P$ corresponds to a solution $(u, w) \in \F^* \times \F^*$ of the equation $A u + B w = 1$, with coefficients defined by \eqref{eq:coeff}.
   Similarly to the previous case we show that $A$ belongs to $\F^*$ while $B$ does not, which leads to a contradiction. Again, the $\p$-adic valuations of the coefficients are expressed in terms of distances by \eqref{eq:val}.
    For every prime of good reduction $\delta_{\p} (Q_2, Q_3 ) = 0$ because $Q_2$ and $Q_3$ reduce to distinct periodic points. Furthermore $\delta_{\p}(Q_1, Q_2) = 0 $ by Lemma \ref{LTroncoso}. Thereby $v_{\p} (A) = 0 $ for every good $\p$, that is $A \in \F^*$. On the other hand $\delta_{\q} (Q_1, Q_3) > 0 $ by equation \eqref{eq:Q_1}, therefore $v_{\q} (B) >0$ and in particular $B\notin \F^*$.
    \qedhere
\end{description}
\end{proof}

We now have all the elements needed to prove the bound for the cardinality of finite orbits.

\begin{theo}\label{TOrbits}
Let $\phi \colon \P^1 \rightarrow \P^1$ be a rational map defined over a global function field with one prime of bad reduction. Let $\mathcal A \subseteq \P^1(K)$ be a finite orbit for $\phi$, then 
\begin{enumerate}
	\item $\# \mathcal A \leqslant 3\cdot \# k(\p) + 6$ for every prime of good reduction $\p$;
	\item if $[K:\F_p(t)]=D$, then $\# \mathcal{A}\leqslant 3p^D+6$.
\end{enumerate}
\end{theo}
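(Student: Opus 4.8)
The plan is to bound $\#\mathcal A$ in terms of $\#k(\p)$ for an arbitrary prime of good reduction $\p$, so that statement (i) follows; then (ii) is deduced exactly as in the proof of Corollary \ref{Cbound0}, by choosing $\p$ lying over a degree-one prime of $\F_p(t)$ (such a good prime exists because $\phi$ has only one bad prime), so that $\#k(\p)\leqslant p^D$. Write $\mathcal A$ as a finite orbit whose periodic part is a cycle of length $n$, so that $\#\mathcal A = n + \#\Tail(\mathcal A)$, and fix a prime of good reduction $\p$. I would split the argument according to whether $n\geqslant 4$ or $n\leqslant 3$.

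When $n\geqslant 4$, Lemma \ref{Lfinite} tells us that distinct points of $\Tail(\mathcal A)$ have distinct reductions modulo $\p$, so reduction embeds $\Tail(\mathcal A)$ into $\P^1(k(\p))$ and $\#\Tail(\mathcal A)\leqslant \#k(\p)+1$; on the other hand $n\leqslant \#k(\p)+1$ by Corollary \ref{Cbound0}. Hence $\#\mathcal A\leqslant 2\bigl(\#k(\p)+1\bigr)\leqslant 3\#k(\p)+6$.

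When $n\leqslant 3$, I would pass to the iterate $\psi=\phi^n$, which again has good reduction outside $S=\{\infty\}$ since good reduction is preserved under composition. Every point of the periodic cycle $P_0\mapsto\cdots\mapsto P_{n-1}$ becomes a fixed point of $\psi$, and since $\mathcal A$ is the forward orbit of a single point under $\phi$ its tail is a simple, unbranched path; consequently $\mathcal A$ decomposes as a disjoint union $\mathcal A=\mathcal A_0\sqcup\cdots\sqcup\mathcal A_{n-1}$, where $\mathcal A_i$ consists of $P_i$ together with those tail points whose forward $\psi$-orbit reaches $P_i$, and each $\mathcal A_i$ is a finite orbit for $\psi$ of the shape \eqref{eq:finite} (a path terminating in the fixed point $P_i$). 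Applying Lemma \ref{Lfixed}(ii) to $\psi$ on each piece gives $\#\mathcal A_i\leqslant\#k(\p)+2$, so $\#\mathcal A=\sum_{i=0}^{n-1}\#\mathcal A_i\leqslant n\bigl(\#k(\p)+2\bigr)\leqslant 3\#k(\p)+6$. Combining the two cases yields (i), and (ii) follows as explained.

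The routine but slightly delicate point is the combinatorial description of $\mathcal A$ as a $\psi$-dynamical system in the second case: one must check that grouping the tail points $P_{-j}$ by the residue of $j$ modulo $n$ produces exactly $n$ unbranched chains, each feeding into one of the fixed points $P_i$, so that Lemma \ref{Lfixed} applies verbatim to each $\mathcal A_i$. I expect this bookkeeping — rather than any new idea — to be the main thing to get right; everything else is a direct appeal to Lemma \ref{Lfinite}, Corollary \ref{Cbound0} and Lemma \ref{Lfixed}.
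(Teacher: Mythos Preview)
Your proposal is correct and follows essentially the same route as the paper: the same case split on $n\geqslant 4$ versus $n\leqslant 3$, the same appeals to Lemma \ref{Lfinite} and Corollary \ref{Cbound0} in the first case, and the same passage to $\phi^n$ with Lemma \ref{Lfixed}(ii) applied to each of the $n$ resulting chains in the second. The paper handles the decomposition of $\mathcal A$ under $\phi^n$ in a single sentence without spelling out the bookkeeping you flag, but otherwise the arguments are the same.
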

\begin{proof}
    Let $n$ denote the length of the cycle that is the periodic part of the orbit $\mathcal A$. If $n \geqslant 4$ we can apply Lemma \ref{Lfinite}: then there are at most $\# k(\p) + 1$ tail points in $\mathcal A$ (because their reductions modulo $\p$ are distinct). Moreover, by Corollary \ref{Cbound0} the number of periodic points in $\mathcal A$ is at most $\#k(\p) + 1$ as well, so that $\# \mathcal A \leqslant 2\cdot\# k(\p) + 2$.
    
    If $n \leqslant 3$, we consider the map $\phi^n$ instead. This gives a partition of $\mathcal A$ in $n$ orbits (with respect to $\phi^n$) with a fixed point as in \eqref{eq:finite} and we can apply Lemma \ref{Lfixed} (ii) to each one of them, so that $\# \mathcal A \leqslant 3\cdot \# k(\p) + 6$.

The second statement follows as in the proof of Corollary \ref{Cbound0}: just note that there exists a good prime $\p $ such that $\# k(\p) \leqslant p^D$.
\end{proof}

\section{Alternative bounds}\label{Sdbound}

The first result of this section is a generalization of Proposition \ref{Ppol} to rational maps. As a consequence we will prove (in the same hypotheses of Theorem \ref{Tbound}) that cycle lengths can be bounded solely in terms of the degree of the rational map $\phi$, unless $\phi$ is defined over a finite field. This condition on the map $\phi$ is to be expected, as the following example shows.

\begin{example}
    Consider $\phi\colon z \mapsto z^2$ over some finite field $\F_q$. Of course $\phi$ may also be regarded as a map over $\F_q(t)$ with good reduction everywhere. Assume $q$ is even and take a generator $w$ of $\F_q^*$: obviously $w$ has exact period $\log_2 q$ for $\phi$. Therefore, as $q$ varies, $w$ has minimal period greater than any given constant.
\end{example}

\begin{prop}\label{Prational}
Let $\phi \colon \P^1 \rightarrow \P^1$ be a rational map of degree $d$ defined over a global function field $K$ and fix a prime of $K$ at infinity. If $\mathcal P$ and $\mathcal P'$ are subsets of $\P^1(K) $ satisfying:
\begin{enumerate}[label = \textnormal{(\alph{enumi})}]
	\item for every prime $\p \neq \infty$ the quantity $\delta_{\p}(P, Q)$ with $(P, Q) $ varying over the pairs of distinct points in $\mathcal P$ is constant;
	\item $\mathcal P' \cup \phi( \mathcal P') \subseteq \mathcal P$.
\end{enumerate}
Then either $\# \mathcal P' \leqslant 2d$, or after a change of coordinates $\phi$ is defined over a finite field.
\end{prop}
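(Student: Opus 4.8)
The plan is to mimic the proof of Proposition \ref{Ppol}, replacing the affine normalization of the polynomial case by a M\"obius normalization, and replacing "differences up to $\F_q^*$" by the equidistance hypothesis (a), which by Lemma \ref{L1new} already forces $\#\mathcal P \leqslant \#k(\p)+1$ but, more importantly, gives a strong structural constraint. First I would fix an $S$-radical choice of coordinates for $\P^1(K)$ with $S=\{\infty\}$, so that $\delta_\p(P,Q)=v_\p(x_Py_Q-x_Qy_P)$ for all $\p\neq\infty$, and recall $\sqrt{\mathcal O_S^*}=\F^*$ for a finite field $\F$. As in the reduction step of Section \ref{preliminaries}, after conjugating $\phi$ by a suitable $\mu\in\PGL_2(\mathcal O_{\mathbb S})$ (which preserves all the quantities $x_Py_Q-x_Qy_P$ up to $\sqrt{\mathcal O_S^*}$ and preserves good reduction outside $\mathbb S$) I may assume some fixed point $P_0\in\mathcal P$ equals $[0:1]$; hypothesis (a) then says that all the other points of $\mathcal P$ have the form $[A x_1:y_i]$ with a common $x$-coordinate up to $\sqrt{\mathcal O_S^*}$-multiples, exactly as in the cycle analysis. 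Rescaling, write $\mathcal P=\{[0:1]\}\cup\{[\pi^\delta:\,y_i]\}$ (or $\{[1:y_i]\}$ if $\delta=0$) where the $y_i$ reduce to distinct elements of $k(\p)$ for every good $\p$; in particular the $y_i$, together with $\infty$, behave like $\#\mathcal P$ "points at mutual distance zero after the obvious rescaling."

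Next I would bring in $\mathcal P'$ and $\phi$. Write $\phi=[F:G]$ in normalized form outside $\infty$, with $\deg\phi=d$, so $F,G$ are binary forms of degree $d$. For each point $P=[\alpha:\beta]\in\mathcal P'$ both $P$ and $\phi(P)=[F(\alpha,\beta):G(\alpha,\beta)]$ lie in $\mathcal P$; since every point of $\mathcal P\setminus\{P_0\}$ has the same normalized $x$-coordinate up to units and $P_0=[0:1]$, the condition "$\phi(P)\in\mathcal P$" translates into: either $F(\alpha,\beta)=0$ (i.e.\ $\phi(P)=P_0$), or $F(\alpha,\beta)/\big(\text{fixed element}\big)\in\sqrt{\mathcal O_S^*}=\F^*$. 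Dehomogenizing (the coordinate $\infty=[1:0]$ may need to be treated separately, accounting for the extra "$+2$" giving $2d$ rather than $d$), set $t_i=y_i$ for $P_i\in\mathcal P'\setminus\{P_0,\infty\}$, and I obtain that $\phi$, read as a rational function $f(z)$ in the coordinate $z=x/y$, sends each $t_i$ to an element of $\F$. So $f$ takes values in the \emph{finite set} $\F\cup\{\infty\}$ at the $\geqslant \#\mathcal P'-2$ points $t_i$, all of which reduce to distinct residues modulo every good prime, and all of which lie in $K$ (indeed in $\sqrt{K^*}$ after the normalization).

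Now the endgame, parallel to part (2) of Proposition \ref{Ppol}: if $\#\mathcal P'-2>2d\ge 2\deg f$, there are more than $2d$ interpolation conditions $f(t_i)=s_i$ with $t_i,s_i$ in (the radical of) a finite field. Writing $f=F/G$ with $\gcd(F,G)=1$ and $\deg F,\deg G\le d$, the conditions $G(t_i)s_i-F(t_i)=0$ form a homogeneous linear system in the $2d+2$ coefficients of $F$ and $G$; having $>2d$ such equations would normally force $F,G$ proportional, but instead — because $f$ has degree exactly $d$ — it forces the coefficient vector to lie in a line, and that line is defined over $\F$ since the $t_i,s_i$ do (they are $\mathbb S$-units, i.e.\ in $\F$, after our normalization, or more carefully: the relevant Vandermonde-type matrix and right-hand side have entries in $\F$). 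Hence $F,G$ can be taken with coefficients in $\F$, i.e.\ the conjugated $\phi$ is defined over the finite field $\F$, and conjugating back we conclude $\phi$ is defined over a finite field after a change of coordinates.

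The main obstacle I expect is the careful bookkeeping of which coordinates live in $K$ versus $L$ versus $\sqrt{K^*}$, and the handling of the two exceptional points ($P_0=[0:1]$ and possibly $[1:0]$), which is exactly why the bound degrades from $d$ in the polynomial case to $2d$ here: one loses up to two points when passing from the "values at points" picture to a genuine rational interpolation problem, and one must check that the value $f=\infty$ (the pole locus, i.e.\ $G(\alpha,\beta)=0$) is compatible with the equidistance hypothesis at $P_0$ — equivalently that $\phi^{-1}(P_0)\cap\mathcal P'$ contributes at most one point. A secondary subtlety is ensuring the linear-algebra step really produces coefficients in $\F$ and not merely in some intermediate field: this needs the observation that after the $\PGL_2(\mathcal O_{\mathbb S})$-normalization the points of $\mathcal P$ (other than $P_0$) are, up to $\F^*$, $[1:\bar t_i]$ with $\bar t_i\in\F$, so the interpolation data genuinely lie in $\F$.
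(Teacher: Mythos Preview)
Your overall strategy matches the paper's: normalize $\mathcal P$ so that its points lie in a finite field $\F$, then argue by rational interpolation that $\mu\phi\mu^{-1}$ is defined over $\F$. However, there are two genuine gaps.

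\textbf{The normalization is incomplete.} After your $\PGL_2(\mathcal O_{\mathbb S})$-conjugation placing $P_0=[0:1]$, hypothesis (a) gives $P_i=[x_1:y_i]$ with $y_i-y_1\in\F$, but \emph{not} $y_i\in\F$ (and certainly not $[1:\bar t_i]$ with $\bar t_i\in\F$, as you assert at the end). The elements $x_1,y_1$ are arbitrary in $L$, so neither the affine coordinates $x_1/y_i$ nor $y_i/x_1$ nor the $y_i$ themselves lie in $\F$. The paper fixes this with a \emph{second} M\"obius transformation $\mu\in\PGL_2(\overline K)$ (not $\PGL_2(\mathcal O_{\mathbb S})$), explicitly $\mu([X:Y])=X/((u-y_1)X+x_1Y)$ for a well-chosen $u\in\F$, which sends $P_0\mapsto 0$ and $P_i\mapsto (u+u_i)^{-1}\in\F^*$. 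Without this step your interpolation data do not live in $\F$, and the linear-algebra argument cannot conclude that the coefficient vector is $\F$-rational.

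\textbf{The source of the bound $2d$ is misidentified.} You attribute the degradation from $d$ to $2d$ to ``losing up to two points'' at $[0:1]$ and $[1:0]$; this is not the reason. Once the normalization above is done, \emph{all} of $\mu(\mathcal P')$ lies in $\F$ (no point maps to $\infty$), and one uses all $\#\mathcal P'\geqslant 2d+1$ interpolation conditions. The $2d+1$ arises because a degree-$d$ rational function has $2d+2$ homogeneous coefficients, so $2d+1$ conditions are needed to pin it down---exactly parallel to $d+1$ conditions for a degree-$d$ polynomial. There is no ``$\#\mathcal P'-2$''.

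On the interpolation step itself: your assertion that the solution space of the linear system is a line is correct but requires proof. The argument is essentially the paper's step (iii): if $(F_2,G_2)$ is any nonzero solution and $(F_1,G_1)$ represents $\mu\phi\mu^{-1}$ with $\gcd(F_1,G_1)=1$ and $\max(\deg F_1,\deg G_1)=d$, then $F_1G_2-F_2G_1$ has degree $\leqslant 2d$ and vanishes at the $2d+1$ points $z_i$, hence is zero, forcing $(F_2,G_2)=\lambda(F_1,G_1)$. With this in hand your route is actually slightly cleaner than the paper's, which instead produces \emph{some} admissible $\F$-rational $\psi$ via a finite-field counting argument on $T\setminus\bigcup H_i$ and only then invokes uniqueness; the counting is unnecessary once one knows $\dim T=1$.
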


\begin{proof}
Let $\overline K$ denote a fixed algebraic closure of $K$. The structure of the proof is as follows. 
\begin{enumerate}
	\item There exist a finite field $\F$ and a linear fractional transformation in $\PGL_2(\overline K)$ which induces a map $\mu\colon \mathcal{P} \rightarrow \F$.
	\item If $z_0, z_1, \dotsc , z_{2d} \in \F$ are $2d + 1$ distinct points such that $\mu\circ\phi\circ \mu^{-1}(z_i ) = w_i \in \F$, then there exists a rational map $\psi$ defined over $\F$ with $\deg(\psi) \leq d$ satisfying 
	\begin{gather*}
		\psi (z_i) = \mu \circ \phi \circ \mu^{-1} (z_i )= w_i \quad \text{for every } i = 0 ,\dotsc ,2d.
	\end{gather*}
	\item The maps $\psi $ and $\mu\circ\phi\circ\mu^{-1}$ are the same.
\end{enumerate}

We begin with the proof of (iii): assume that $\psi$ is a rational map of degree less than or equal to $d$ such that $\psi(z_i) = \mu \circ \phi \circ \mu^{-1}(z_i) $ for all $i$.
Taking a common denominator we can write $\mu \circ \phi \circ \mu^{-1}(X)- \psi(X) = f(X)/g(X) $ as the quotient of two relatively prime polynomials with $\deg(f) \leqslant 2d$. Thus $f(z_i) = 0$ for every $i=0,\dots,2d$ and $f$ is the zero polynomial, that is $\psi =\mu \circ \phi \circ \mu^{-1}$.

We now prove (i): fix an $S$-radical choice of coordinates for $\P^1(K)$ and put $\mathcal P = \{ {P_i=[x_i:y_i]} : i = 0, \dotsc , n-1 \}$. Working as in Section \ref{preliminaries} we can assume $P_0 = [0 : 1]$: recall that the change of coordinates does not affect $\p$-adic distances. By (a) for every nonzero index $i$
\begin{align*}
	\delta_{\p}(P_0, P_i) = \delta_{\p}(P_0, P_1) & \quad \text{for every } \p \notin S \text{ yields}\\
	v_{\p}(x_i) = v_{\p}(x_1) & \quad \text{for every } \p \notin \mathbb S \text{, i.e.}\\
	x_i =u_i x_1 & \quad \text{for some } u_i \in \F^*.
\end{align*}
Then up to multiplication of both coordinates by an element of $\F^*$ we may assume $P_i = [x_1 : y_i]$ for every nonzero index $i$. Again by (a) for $i > 1$
\begin{align*}
\delta_{\p} (P_1, P_i) = \delta_{\p}(P_0, P_1) & \quad \text{for every } \p \notin S \text{ yields}\\
v_{\p} (x_1 (y_i - y_1) ) = v_{\p} (x_1) & \quad \text{for every } \p \notin \mathbb S \text{, i.e.}\\
y_i = y_1 + u_i & \quad \text{for some } u_i \in \F^*.
\end{align*}
Then $\mathcal P$ contains $P_0 = [0:1]$ and $P_i = [x_1: y_1 + u_i]$ with $u_i \in \F$ for $i = 1, \dotsc , n-1$ (of course $u_1 = 0$). Up to considering a finite extension, we can assume that $\F$ contains an element $u$ such that $u + u_i \neq 0 $ for all $i$. Consider the M\"obius transformation $\mu$ associated with
\[M= \left[ \begin{matrix} 1 & 0 \\ u - y_1 & x_1
\end{matrix}\right] \in \PGL_2(\overline K),\]
that is $\mu(P) = \frac{x_P}{(u - y_1)x_P + x_1 y_P}$. Then $\mu(P_0)=0$ and $\mu(P_i)=(u+u_i)^{-1}$ for every $i>0$, therefore $\mu(\mathcal P ) \subseteq \F$.

For the proof of (ii) we abuse notation a bit by replacing $\phi$ with $\mu\circ\phi\circ \mu^{-1}$, so that $z_0, \dotsc, z_{2d}$ are distinct elements of a finite field $\F$ and $\phi(z_i) = w_i \in \F$ for all $i$. Write the generic rational map defined over $\overline K$ of degree at most $d$ as
\begin{equation*}
\psi (X) = \dfrac{a_0 + a_1 X + \cdots + a_d X^d}{b_0 + b_1 X+ \cdots + b_d X^d}
\end{equation*}
and set $a = (a_0,\dots,a_d)$ and $ b = (b_0,\dots,b_d)$ vectors of $\overline K^{d+1}$. We can identify the rational map $\psi$ with the vector $(a,b) $ of length $2d + 2$. Of course for every $\lambda \in \overline K^*$ the vectors $(a, b) $ and $ (\lambda a, \lambda b) $ define the same rational function. This gives a map from the set of rational functions of degree at most $d$ to the set of lines in $\overline K^{2d + 2}$:
\begin{equation*}
\psi (X) = \dfrac{a_0 + a_1 X + \cdots + a_d X^d}{b_0 + b_1 X+ \cdots + b_d X^d}\longmapsto [a:b] \in \P(\overline K^{2d+2})\,.
\end{equation*}
By analogy with the proof of Proposition \ref{Ppol}, we want to write the condition
\begin{equation}\label{eq:psi1}
\psi(z_i ) = w_i \quad \text{for } i = 0, \dotsc, 2d
\end{equation}
as a linear system where the coefficients of $\psi $ are treated as indeterminates.
First, notice that equation \eqref{eq:psi1} implies 
\begin{equation}\label{eq:psi2}
\sum_{j = 0}^{d} a_j z_i^j = w_i \sum_{j=0}^{d} b_j z_i^j \quad \text{for } i = 0, \dotsc, 2d.
\end{equation}
Now define the $(2d + 1) \times (d + 1) $ matrix with coefficients in $\F$
\begin{equation*}
Z= \left[ \begin{array}{cccc} 1 & z_0 & \cdots & z_0^d \\1 & z_1 & \cdots & z_1^d \\ \vdots &\vdots & & \vdots\\ 1 & z_{2d} & \cdots & z_{2d}^d
\end{array}\right]
\end{equation*}
and let $\{ e_1, \dotsc , e_{2d + 1 }\} $ denote the canonical basis of $\overline K^{2d +1}$, then \eqref{eq:psi2} can be rewritten as 
\begin{equation}\label{eq:psi3}
\left[\begin{array}{c|c} Z & 0 \\ \hline 0 & Z \end{array}\right] \left[\begin{array}{c} a \\ b \end{array} \right] \in \mathrm{Span}\left\{ \left[ \begin{array}{c} w_i e_{i+1} \\ e_{i+1} \end{array} \right] : i= 0, \dotsc , 2d \right\} = W.
\end{equation}
Mark that equation \eqref{eq:psi3} (or equivalently \eqref{eq:psi2}) does not automatically imply \eqref{eq:psi1}. Indeed, \eqref{eq:psi2} is satisfied if both members of the equation vanish, but these solutions are \textit{not admissible} for us since they provide no information on $\psi(z_i)$. Define 
\begin{equation*}
T = \left\{ \left[\begin{array}{c} a \\ b \end{array} \right] \in \overline K^{2d + 2} \text{ such that } \left[\begin{array}{c|c} Z & 0 \\ \hline 0 & Z \end{array}\right] \left[\begin{array}{c} a \\ b \end{array} \right] \in W \right\}.
\end{equation*}
Since $T$ is a subspace of $\overline K^{2d+ 2} $ defined by a linear system with coefficients in the subfield $\F$, it is spanned by vectors in $\F^{2d + 2}$, i.e.
\begin{equation}\label{eq:T}
T= \mathrm{Span}\{ v_1 , \dotsc, v_{\dim (T)} \} \quad \text{with } v_j \in \F^{2d + 2} \text{ for all } j.
\end{equation}
Moreover $T$ is nontrivial because $\phi$ satisfies \eqref{eq:psi1} and therefore its defining vector belongs to $T$. For every $i = 0 , \dotsc, 2d$ define the subspaces 
\begin{equation*}
H_i = \left\{ \left[\begin{array}{c} a \\ b \end{array} \right] \in \overline K^{2d + 2} \text{ such that } \sum_{j = 0}^{d} a_j z_i^j = \sum_{j=0}^{d} b_j z_i^j = 0 \right\} 
\end{equation*}
and put
\begin{equation*}
H= \bigcup_{i = 0}^{2d} H_i, \quad Y = T - H.
\end{equation*}
Then $Y$ is the set of admissible solutions and we claim that it contains the defining vector $(a_\phi, b_\phi)$ of $\phi$. Indeed, assume $(a_\phi,b_\phi) \in H$, then it belongs to some $H_i$, i.e. $z_i$ is a common root of the numerator and denominator of $\phi$, a contradiction to $\deg(\phi) = d$. We end the proof showing that $Y \cap \F^{2d + 2}$ is nonempty so that it suffices to take one of its elements as defining vector of $\psi$. This can be done by a simple argument on cardinalities. First, $T$ is not contained in any of the subspaces $H_i$ because $Y$ is nonempty; in particular $\dim(T \cap H_i) \leqslant \dim (T) -1$ for $i=0, \dotsc, 2d$. Furthermore the subspaces $T \cap H_i$ are spanned by vectors in $\F^{2d + 2}$, hence 
\begin{gather*}
\#(T \cap H_i \cap \F^{2d + 2}) \leqslant (\# \F)^{\dim(T) -1} \quad \text{and} \\
\#(T \cap \F^{2d+2} ) = (\# \F)^{\dim(T)} \quad \text{by \eqref{eq:T}.}
\end{gather*}
Therefore
\begin{equation*}
    \#(Y \cap \F^{2d + 2} ) \geqslant (\# \F)^{\dim(T)} - (2d + 1) (\# \F)^{\dim(T) - 1}.
\end{equation*}
Up to considering a finite extension of $\F$ we may assume $\# \F > 2d +1$ and thus $Y \cap \F^{2d + 2}$ is nonempty.
\end{proof}


\begin{cor}\label{Cdbound}
Let $\phi \colon \P^1 \rightarrow \P^1$ be a rational map defined over a global function field with one prime of bad reduction. Let $d \geqslant 1$ be the degree of the map and $\mathcal C \subseteq \P^1(K)$ a cycle for $\phi$. Then either $\# \mathcal C \leqslant 2d$, or after a change of coordinates $\phi$ is defined over a finite field.
\end{cor}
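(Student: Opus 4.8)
The plan is to deduce this directly from Proposition \ref{Prational}, using Theorem \ref{T1} to verify its first hypothesis. As throughout this setting, I would place the unique prime of bad reduction of $\phi$ at infinity and set $S = \{\infty\}$, so that $\phi$ has good reduction at every prime $\p \neq \infty$ and $\sqrt{\mathcal O_S^*} = \F^*$.

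First I would dispose of the trivial case: if $\mathcal C$ consists of a single fixed point, then $\# \mathcal C = 1 \leqslant 2d$ since $d \geqslant 1$, and there is nothing to prove. So assume that $\mathcal C$ has length $n > 1$. By Theorem \ref{T1}, for every prime $\p$ of good reduction the quantity $\delta_{\p}(P, Q)$, with $(P, Q)$ ranging over pairs of distinct points of $\mathcal C$, is constant; this is precisely hypothesis (a) of Proposition \ref{Prational} with $S = \{\infty\}$.

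Next I would apply Proposition \ref{Prational} with $\mathcal P = \mathcal P' = \mathcal C$. Since a cycle is a $\phi$-invariant set one has $\phi(\mathcal C) = \mathcal C$, hence $\mathcal P' \cup \phi(\mathcal P') = \mathcal C \subseteq \mathcal P$, which is hypothesis (b). Proposition \ref{Prational} then gives the dichotomy: either $\# \mathcal C = \# \mathcal P' \leqslant 2d$, or after a change of coordinates $\phi$ is defined over a finite field, which is exactly the assertion of the corollary.

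There is essentially no obstacle beyond matching up the hypotheses; all the real work is already contained in Theorem \ref{T1} and Proposition \ref{Prational}. The only points deserving a word of care are the restriction $n > 1$ in Theorem \ref{T1}, which is what forces the separate (but immediate) treatment of fixed points, and the fact that the conjugating transformation produced by Proposition \ref{Prational} may a priori be defined only over $\overline K$ rather than over $K$ itself; both of these are, however, already accounted for in the statements invoked.
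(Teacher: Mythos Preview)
Your proof is correct and follows exactly the paper's own argument: fix the bad prime at infinity, invoke Theorem \ref{T1} to verify hypothesis (a) of Proposition \ref{Prational}, and apply that proposition with $\mathcal P = \mathcal P' = \mathcal C$ using the $\phi$-invariance of a cycle for hypothesis (b). Your explicit treatment of the case $n=1$ (where Theorem \ref{T1} does not apply but the bound $1 \leqslant 2d$ is immediate) is a harmless clarification that the paper leaves implicit.
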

\begin{proof}
Fix the prime of bad reduction for $\phi$ at infinity. By Theorem \ref{T1} we may apply Proposition \ref{Prational} with $\mathcal P = \mathcal P' = \mathcal C$: condition (b) is satisfied because the set $\mathcal C$ is $\phi$-invariant.
\end{proof}

Similarly, we can prove a bound for the cardinality of finite orbits combining Proposition \ref{Prational} with Lemmas \ref{Lfixed} and \ref{Lfinite}.

\begin{cor}
Let $\phi \colon \P^1 \rightarrow \P^1$ be a rational map defined over a global function field with one prime of bad reduction. Let $d \geqslant 1$ be the degree of the map and $\mathcal A \subseteq \P^1(K)$ a finite orbit for $\phi$. Then either $\# \mathcal A \leqslant 6d^3 + 3$, or after a change of coordinates at least one of $\phi, \phi^2 $ and $\phi^3$ is defined over a finite field.
\end{cor}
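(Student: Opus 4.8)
The strategy is to iterate the bound for cycle lengths from Corollary \ref{Cdbound} together with the tail bounds coming from Lemmas \ref{Lfixed} and \ref{Lfinite}, but being careful about which iterate of $\phi$ ends up defined over a finite field in the exceptional case. Fix the prime of bad reduction at infinity and let $n$ denote the length of the periodic part of $\mathcal A$ and $t = \# \Tail(\mathcal A)$.

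First I would split on the size of $n$. If $n \geqslant 4$, then Lemma \ref{Lfinite} gives that distinct tail points reduce to distinct points at \emph{every} prime of good reduction, so their pairwise $\p$-adic distances vanish; hence $\Tail(\mathcal A) \cup \{P_0\}$ (for any fixed periodic point $P_0$) has all pairwise distances constant and Lemma \ref{L1new} bounds $t$. To get a $d$-dependent bound instead, I would rather apply Proposition \ref{Prational} directly: take $\mathcal P = \Tail(\mathcal A) \cup \mathcal C$ where $\mathcal C$ is the full cycle — condition (a) needs checking, but by Theorem \ref{T1} the distances within $\mathcal C$ are constant, by Lemma \ref{Lfinite} the tail points are mutually at distance $0$, and a tail-to-cycle computation via Lemma \ref{LTroncoso}/Lemma \ref{Lpreper} should force the tail-to-periodic distances into the same class — and $\mathcal P' = \mathcal P$ is $\phi$-invariant, so Proposition \ref{Prational} yields $\#\mathcal A \leqslant 2d$ unless $\phi$ is conjugate to a map over a finite field. (If the clean tail-to-cycle distance computation is not literally uniform I would instead bound the cycle by $2d$ via Corollary \ref{Cdbound} and the tail by a separate application of Proposition \ref{Prational} to $\Tail(\mathcal A)\cup\{P_0\}$, costing another $2d$, so $\#\mathcal A \leqslant 4d$ in this range.)

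If $n \leqslant 3$, I would pass to $\psi = \phi^n$, which has degree $d^n \leqslant d^3$ and the same prime of bad reduction. The orbit $\mathcal A$ decomposes into $n \leqslant 3$ forward orbits under $\psi$, each of the shape \eqref{eq:finite} (a tail ending in a $\psi$-fixed point). To each I would apply Proposition \ref{Prational} with $\psi$ in place of $\phi$ and $\mathcal P = \mathcal P'$ the corresponding $\psi$-orbit: Lemma \ref{Lfixed} (i) guarantees condition (a) for the orbit minus its first point, and I would handle the first point $P_{-m+1}$ by noting it contributes at most one extra element, so each piece has size at most $2d^n + 1 \leqslant 2d^3 + 1$, unless $\psi = \phi^n$ is conjugate to a map over a finite field. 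Summing over the $n \leqslant 3$ pieces gives $\#\mathcal A \leqslant 3(2d^3 + 1) = 6d^3 + 3$. Combining the two ranges, $\#\mathcal A \leqslant 6d^3 + 3$ unless, after a change of coordinates, one of $\phi, \phi^2, \phi^3$ (namely $\phi^n$ with $n \in \{1,2,3\}$, taking $\phi$ itself when $n \geqslant 4$) is defined over a finite field.

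**Main obstacle.** The delicate point is bookkeeping the exceptional case across the passage to $\phi^n$: Proposition \ref{Prational} concludes that $\mu \circ \psi \circ \mu^{-1}$ is defined over a finite field, i.e. that $\phi^n$ (not $\phi$) becomes finite after conjugation, which is exactly why the statement mentions $\phi, \phi^2, \phi^3$ rather than just $\phi$; I would need to state this carefully rather than claim more than the proposition gives. A secondary nuisance is verifying hypothesis (a) of Proposition \ref{Prational} for the chosen sets — in particular making sure the first tail point $P_{-m+1}$, which Lemma \ref{Lfixed} explicitly excludes, is absorbed into the count as a single extra point rather than breaking the constancy of distances — and checking that the degree of $\phi^n$ entering the proposition is genuinely $\leqslant d^3$ so that the $2d^n$ bound becomes $2d^3$.
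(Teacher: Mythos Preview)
Your approach matches the paper's: split on $n$, use Lemma \ref{Lfinite} and Proposition \ref{Prational} when $n\geqslant 4$, and pass to $\phi^n$ with Lemma \ref{Lfixed} when $n\leqslant 3$. The $n\leqslant 3$ case is handled correctly and yields exactly the paper's bound $6d^3+3$.

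In the $n\geqslant 4$ case your first attempt with $\mathcal P=\Tail(\mathcal A)\cup\mathcal C$ does fail, as you suspect: the constant $\p$-adic distance on the cycle (Theorem \ref{T1}) need not be $0$, whereas the pairwise tail distances are all $0$ by Lemma \ref{Lfinite}, so condition (a) of Proposition \ref{Prational} breaks. Your fallback also has a small gap: adding a periodic point $P_0$ to $\Tail(\mathcal A)$ does not in general preserve condition (a), since $\delta_\p(P_0,Q)$ for $Q$ a tail point can be positive at some good prime (Lemma \ref{LTroncoso} only says this forces $P_0$ to be a specific periodic point, which varies with $Q$). The paper's fix is simply to work entirely inside the tail: take $\mathcal P=\Tail(\mathcal A)$ and $\mathcal P'=\Tail(\mathcal A)\setminus\{P\}$, where $P$ is the unique tail point with $\phi(P)$ periodic. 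Then condition (a) holds (all distances are $0$) and condition (b) holds because $\phi(\mathcal P')\subseteq\Tail(\mathcal A)$. This gives $\#\Tail(\mathcal A)\leqslant 2d+1$, and combining with $n\leqslant 2d$ from Corollary \ref{Cdbound} yields $\#\mathcal A\leqslant 4d+1$ in this range, well within $6d^3+3$.
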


\begin{proof}
Assume that there is no change of coordinates such that $\phi$ is defined over a finite field. Let $n$ denote the length of the cycle that is the periodic part of the orbit $\mathcal A$. In particular $n \leqslant 2d$ by Corollary \ref{Cdbound}. 

First assume $n \geqslant 4$, then by Lemma \ref{Lfinite} distinct points in $\Tail(\mathcal A)$ reduce to distinct points modulo $\p$ for every prime of good reduction. In other words $\delta_{\p} (P, Q) = 0 $ for every pair of distinct points in $\Tail(\mathcal A)$ and for all primes except for the one of bad reduction. Fix the bad prime at infinity and let $P$ be the unique tail point in $\mathcal A$ such that $\phi(P)$ is periodic. Then we can apply Proposition \ref{Prational} with $\mathcal P = \Tail(\mathcal A)$ and $\mathcal P' = \mathcal P - \{P\}$. Therefore $\# \Tail(\mathcal A) \leqslant 2d + 1$, thus $\# \mathcal A \leqslant 4d+1$. 

If $n \leqslant 3$, we consider the map $\phi^n$ instead. This gives a partition of $\mathcal A$ in $n$ distinct orbits for $\phi^n$, each one terminating in a fixed point. Let $\mathcal A'$ be one of these orbits and assume it contains $m$ points indexed as in \eqref{eq:finite}. Let $\mathcal P = \mathcal A' - \{P_{-m + 1} \} $, then $\mathcal P $ is a $\phi^n$-invariant set. Furthermore for every fixed prime of good reduction $\p$ the quantity $\delta_{\p}(P, Q) $ with $(P, Q) $ varying over the pairs of distinct points in $\mathcal P $ is constant by Lemma \ref{Lfixed}. By Proposition \ref{Prational} either $\# \mathcal P \leqslant 2 \deg(\phi^n)$, or $\phi^n$ is defined over a finite field (after some change of coordinates). Thus, assuming that neither of $\phi, \phi^2 $ and $\phi^3$ is defined over a finite field, one gets $\# \mathcal A' \leqslant 2d^3 + 1 $, hence $\# \mathcal A \leqslant 6d^3 + 3$.
\end{proof}



\begin{thebibliography}{99}

\bibitem{Ben07}{\scshape Benedetto, R. L.} Preperiodic points of polynomials over global fields. {\em J. Reine Angew. Math.} {\bf 608} (2007), 123--153.
\bibitem{Canci07}{\scshape Canci, J. K.} Finite orbits for rational functions. {\em Indag. Math. (N. S.)} {\bf 18} (2007), no. 2, 203--214.
\bibitem{CP16b}{\scshape Canci, J. K.; Paladino, L.} On preperiodic points for rational functions defined over $\mathbb F_p(t)$. {\em Riv. Math. Univ. Parma (N. S.)} {\bf 7} (2016), no. 1, 193--203.
\bibitem{CP16}{\scshape Canci, J. K.; Paladino, L.} Preperiodic points for rational functions defined over a global field in terms of good reduction. {\em Proc. Amer. Math. Soc.} {\bf 144} (2016), no. 12, 5141--5158.
\bibitem{CTV19}{\scshape Canci, J. K.; Troncoso, S.; Vishkautsan, S.} Scarcity of finite orbits for rational functions over a number field. {\em Acta Arith.} {\bf 190} (2019) no. 3, 221--237.
\bibitem{CV19}{\scshape Canci, J. K.; Vishkautsan, S.} Scarcity of cycles for rational functions over a number field. {\em Trans. Amer. Math. Soc.} {\bf 371} (2019), no. 1, 335--356.
\bibitem{DP20}{\scshape Doyle, J. R.; Poonen, B.} Gonality of dynatomic curves and strong uniform boundedness of preperiodic points. {\em Compos. Math.} {\bf 156} (2020), no. 4, 733-743.
\bibitem{ESS02}{\scshape Evertse, J.; Schlickewei, H.; Schmidt, W.} Linear equations in variables which lie in a multiplicative group. {\em Ann. of Math. (2)} {\bf 155} (2002), no.3, 807--836.
\bibitem{FPS97}{\scshape Flynn, V.; Poonen, B.; Schaefer, E. F.} Cycles for quadratic polynomials and rational points on a genus-2 curve. {\em Duke Math. J.} {\bf 90} (1997), no. 3, 435--463.
\bibitem{Merel96}{\scshape Merel, L.} Bornes pour la torsion des courbes elliptiques sur les corps de nombres. {\em Invent. Math.} {\bf 124} (1996), no. 1-3, 437--449.
\bibitem{Mor98}{\scshape Morton, P.} Arithmetic properties of periodic points of quadratic maps. II. {\em Acta Arith.} {\bf 87} (1998), no. 2, 89--102.
\bibitem{MS94}{\scshape Morton, P.; Silverman, J. H.} Rational periodic points of rational functions. {\em Internat. Math. Res. Notices} {\bf 2} (1994), 97--110.
\bibitem{MS95} {\scshape Morton, P.; Silverman, J. H.} Periodic points, multiplicities, and dynamical units. {\em J. Reine Angew. Math.} {\bf 461} (1995), 81--122. 
\bibitem{Nor50}{\scshape Northcott, D. G.} Periodic points on an algebraic variety. {\em Ann. of Math- (2)} {\bf 51} (1950), 167--177.
\bibitem{Poonen98}{\scshape Poonen, B.} The classification of rational preperiodic points of quadratic polynomials over ${\bf Q}$: a refined conjecture. {\em Math. Z.} {\bf 228} (1998), no. 1, 11--29.
\bibitem{Rosen02}{\scshape Rosen, M.} {\em Number theory if function  fields.} GTM 210 Springer-Verlag, New York (2002).
\bibitem{Sil07}{\scshape Silverman, J. H.} {\em The arithmetic of dynamical systems.} GTM 241 Springer, New York (2007).
\bibitem{Sti09}{\scshape Stichtenoth, H.} {\em Algebraic function fields and codes.} GTM 254 Second Edition Springer-Verlag, Berlin (2009).
\bibitem{Tro17}{\scshape Troncoso, S.} Bounds for preperiodic points for maps with good reduction. {\em J. Number Theory} {\bf 181} (2017), 51--72.
\bibitem{Zieve96}{\scshape Zieve, M. E.} {\em Cycles of polynomial mappings.} ProQuest LLC, Ann Arbor, MI (1996); Thesis (Ph.D.)--University of California, Berkley.


\end{thebibliography}
\end{document}